\documentclass[11pt,reqno]{amsart}
\usepackage[T1]{fontenc}
\usepackage[utf8]{inputenc}
\usepackage{lmodern,amsmath,amssymb,amsthm,booktabs,microtype}
\usepackage[hidelinks]{hyperref}
\hypersetup{
  pdftitle={Strict correlation-dimension drop for quartic Salem Bernoulli convolutions},
  pdfsubject={Correlation dimension, exact collisions, and rigorous explicit bounds},
  pdfkeywords={Bernoulli convolution, Salem number, correlation dimension, exact collision, toral automorphism, computer-assisted proof}
}
\numberwithin{equation}{section}

\newtheorem{theorem}{Theorem}[section]
\newtheorem{proposition}[theorem]{Proposition}
\newtheorem{lemma}[theorem]{Lemma}
\newtheorem{corollary}[theorem]{Corollary}
\theoremstyle{definition}
\newtheorem{definition}[theorem]{Definition}

\newcommand{\RR}{\mathbb R}
\newcommand{\ZZ}{\mathbb Z}
\newcommand{\QQ}{\mathbb Q}
\newcommand{\CC}{\mathbb C}
\newcommand{\TT}{\mathbb T}
\newcommand{\PP}{\mathbb P}
\newcommand{\Shift}{\mathcal S}
\DeclareMathOperator{\Law}{Law}
\DeclareMathOperator{\can}{can}
\DeclareMathOperator{\dist}{dist}
\DeclareMathOperator{\supp}{supp}

\makeatletter
\newcommand{\sectionnotoc}[1]{%
  \let\saved@tocwrite\@tocwrite
  \let\@tocwrite\@gobbletwo
  \section*{#1}%
  \let\@tocwrite\saved@tocwrite
}
\makeatother

\title[Quartic Salem Bernoulli convolutions]{Strict correlation-dimension drop for quartic Salem Bernoulli convolutions}
\subjclass[2020]{Primary 28A80; Secondary 11R06, 60G50}
\keywords{Bernoulli convolution, Salem number, correlation dimension,
exact collision, toral automorphism}
\date{}

\begin{document}

\author{Guozheng Cheng}
\address{School of Mathematical Sciences, Dalian University of Technology, Dalian, 116024, P. R. China}
\email{gzhcheng@dlut.edu.cn}

\author{Xiang Fang}
\address{Department of Applied Mathematics, National Yang Ming Chiao Tung University, Hsinchu, 300093, Taiwan}
\email{xfang@nycu.edu.tw}

\author{Xueqing Ma}
\address{School of Mathematical Sciences, Dalian University of Technology, Dalian, 116024, P. R. China}
\email{maxueqing4680@mail.dlut.edu.cn}

\author{Hongli Zhang}
\address{School of Mathematical Sciences, Dalian University of Technology, Dalian, 116024, P. R. China}
\email{zhanghongli1@mail.dlut.edu.cn}

\begin{abstract}
For every quartic Salem number $\beta\in(1,2)$, we prove that the Bernoulli convolution $\nu_{\beta^{-1}}$ with equal weights has correlation dimension strictly less than one. In particular, neither of the two corresponding measures has an $L^2$ density. We relate correlation dimension to exact collision probabilities and express these probabilities as positive integrals over a four-dimensional torus associated with the defining polynomial. The unit-circle conjugates produce a persistent oscillatory term; using carefully chosen return times of the corresponding rotation, we take differences that make this term small while controlling the length and cost of the resulting configurations. Many well-separated placements of these configurations then give a collision probability above the critical scale, which yields the strict dimension drop. The qualitative argument is independent of the computer-assisted estimates. Separate computer-assisted arguments give
\[
 0.99999<D_2(\nu_{\beta_1^{-1}})<1-10^{-65},
 \qquad
 0.99999<D_2(\nu_{\beta_2^{-1}})<1-10^{-75},
\]
where $\beta_1<\beta_2$ are the two quartic Salem numbers in $(1,2)$.
\end{abstract}
\maketitle

\vspace{-2em}

\tableofcontents

\section{Introduction}\label{sec:intro}

For $0<r<1$, let
\[
\nu_r=\Law\left(\sum_{j=0}^{\infty}\varepsilon_j r^j\right),
 \qquad
 \PP(\varepsilon_j=1)=\PP(\varepsilon_j=-1)=\frac12,
\]
where the signs are independent. Bernoulli convolutions form a basic family of self-similar measures whose regularity can depend strongly on the arithmetic of the contraction parameter. In the overlapping range $r>1/2$, the support is an interval, but this alone gives little information about how regular the measure is. The surveys \cite{PSS,Varju,VarjuICM} describe the main dimension and regularity questions for this family.

We study the correlation dimension of these measures. For a compactly supported Borel probability measure $\mu$ on $\RR$, put
\[
 I_s(\mu)=\iint_{\RR^2}|x-y|^{-s}\,d\mu(x)\,d\mu(y),
 \qquad 0<s<1,
\]
and define
\[
 D_2(\mu)=\sup\bigl(\{0\}\cup\{s\in(0,1):I_s(\mu)<\infty\}\bigr).
\]
We use this energy formulation of the order-two, or correlation, dimension throughout; see \cite[Definitions~2.2--2.3]{KPV}. It measures how often two independent samples from $\mu$ are close to one another at small scales. In particular, a compactly supported probability measure with an $L^2(\RR)$ density has $D_2=1$.

A Salem number $\beta>1$ is a real algebraic integer whose conjugates consist of $\beta$, $\beta^{-1}$, and one or more pairs on the unit circle. We consider $1<\beta<2$ and set $r=\beta^{-1}$. The unit-circle conjugates create a difficulty that is absent for Pisot numbers. For a Pisot number, all nonprincipal conjugates lie strictly inside the unit circle, so their contributions decay exponentially. For a Salem number, the unit-circle conjugates instead produce an oscillation that does not decay. In degree four there is exactly one such conjugate pair. Since four is the smallest possible degree of a Salem number, this is the first case in which this difficulty appears.

There are precisely two quartic Salem numbers in $(1,2)$, as proved in Proposition~\ref{prop:quartic-class}. We denote them by $\beta_1<\beta_2$; they are the roots greater than one of
\begin{equation}\label{eq:quartic-polynomials}
 P_1(X)=X^4-X^3-X^2-X+1,
 \qquad
 P_2(X)=X^4-2X^3+X^2-2X+1.
\end{equation}
Thus the following theorem covers the complete quartic class in the overlapping range.

\begin{theorem}[Main theorem]\label{thm:main}
Let $\beta\in(1,2)$ be a quartic Salem number. Then
\[
 D_2(\nu_{\beta^{-1}})<1.
\]
\end{theorem}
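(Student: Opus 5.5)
We reduce the statement to a lower bound on exact collision probabilities. Write $r=\beta^{-1}$ and let $p_n$ be the probability that two independent sign sequences $(\varepsilon_j)_{j<n}$, $(\varepsilon'_j)_{j<n}$ give the same value, i.e.\ $\sum_{j<n}(\varepsilon_j-\varepsilon'_j)\beta^{n-1-j}=0$. Equivalently, $p_n=4^{-n}\#\{(a,b)\in\{\pm1\}^n\times\{\pm1\}^n:\sum_j (a_j-b_j)\beta^{j}=0\}$. First we show that the correlation dimension controls $p_n$: if $p_n\ge C\,\beta^{-n(1-\delta)}$ for some $\delta>0$ along infinitely many $n$, then $D_2(\nu_r)\le 1-\delta'<1$. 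The mechanism is as follows. An exact collision at level $n$ forces the two points $\sum\varepsilon_j r^j$ and $\sum\varepsilon'_j r^j$ to agree up to tails of size $O(r^n)$. Independence of the tails then gives $(\nu_r\times\nu_r)\{|x-y|\le c\,r^n\}\ge p_n\cdot(\nu_r\times\nu_r)\{|x-y|\le c'\}$, and hence $I_s(\nu_r)\gtrsim \sum_n p_n r^{-ns}$. This diverges once $s>1-\delta$. Thus it suffices to show that $\limsup_n \frac1n\log_\beta p_n>-1$.

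To estimate $p_n$, we use the algebraic structure. Since $P(\beta)=0$ is irreducible of degree four, a polynomial $\sum d_j X^j$ with integer coefficients vanishes at $\beta$ iff it vanishes at all conjugates. So we embed the digit differences $d_j=\varepsilon_j-\varepsilon'_j\in\{0,\pm2\}$ via the companion matrix $A$ of $P$, which acts on $\ZZ^4$. Then $p_n$ is the probability that the walk $\sum_j d_j A^j e_1$ returns to $0\in\ZZ^4$. Fourier inversion on $\TT^4$ gives
\[
p_n=\int_{\TT^4}\prod_{j<n}\cos^2\bigl(2\pi\langle A^{j}e_1,\theta\rangle\bigr)\,d\theta,
\]
and the integrand is nonnegative. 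Diagonalizing $A$ splits the phase into an expanding coordinate ($\beta$), a contracting coordinate ($\beta^{-1}$), and a rotation coordinate ($e^{\pm i\omega}$ with $\omega/\pi$ irrational). The expanding direction alone gives the ``critical'' contribution of order $\beta^{-n}$. The contracting direction is harmless. The rotation contributes an oscillatory term $\mathrm{Re}(c\,e^{ij\omega}\theta')$ that never decays.

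The strategy is to exhibit many explicit collisions rather than to evaluate the integral asymptotically. We take return times $q$ for which $\|q\omega/2\pi\|$ is tiny (continued fraction convergents). We then form difference configurations: polynomials $Q(X)$ built from $X^q-1$ and its products with short multiples of $P$. These have coefficients in $\{0,\pm1,\pm2\}$, vanish at $\beta$ exactly, and have lengths and ``costs'' (numbers of nonzero digits, each costing a factor $1/2$) that we control. Placing many such blocks at well-separated positions inside a word of length $n$, with free coordinates elsewhere, counts at least $\binom{n/L}{k}2^{-ck}$ collision pairs beyond the trivial diagonal ones. If the gain in combinatorial entropy per block exceeds its cost relative to the critical rate $\log 2-\log\beta$ per digit, we obtain $p_n\ge\beta^{-n(1-\delta)}$.

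The main obstacle is producing exact vanishing configurations whose digits remain in $\{0,\pm2\}$ while the rotation term is cancelled. Small-but-nonzero rotation residue does not give an exact collision. So I would first combine a return time $q$ with a bounded correction from an integer relation in $\ZZ[\beta]$ that kills the residual lattice vector, and then verify digit admissibility for both $P_1$ and $P_2$, possibly by a finite check for each polynomial. If exactness proves too rigid, the fallback is to work directly with the torus integral: lower-bound it on a small neighbourhood of a return point, where the rotation factors are close to $1$, and trade the volume of that neighbourhood against the product of cosines.
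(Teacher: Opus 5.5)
\textbf{There is a genuine gap.} Your reduction to the criterion $\beta^nQ_n>1$ is correct and matches Theorem~\ref{thm:collision-pressure}. Your positive torus formula matches Lemma~\ref{lem:torus-formula}. The gap is in the step meant to produce enough collision mass, and your main construction cannot supply it.

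\emph{Return times do nothing for exact words.} Any integer polynomial vanishing at $\beta$ is a multiple of $P$, so it also vanishes at $\zeta$. An exact collision word therefore has no rotation residue to cancel. Moreover, $(X^q-1)M(X)$ with $P\mid M$ is just $\pm$ two translates of $M$, which your placement count already includes. (Also, admissible digits are $\{0,\pm2\}$, or $\{0,\pm1\}$ after halving, never a mixture; for instance $P_2$ itself is not admissible.)

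\emph{The plan restates the theorem.} Concatenating exact blocks $B$ of length $L_B$ with $w_B$ nonzero half-difference digits, separated by zero digits, gives only the renewal condition
$\frac{\beta}{2}+\sum_B 2^{-w_B}(\beta/2)^{L_B}>1$.
If $B$ ranges over all first-return collision words, this condition is \emph{equivalent} to $D_2<1$. Nothing in your proposal explains why admissible blocks carrying that much mass exist. The margin is also tiny (Theorem~\ref{thm:quantitative} gives $D_2>0.99999$), so a finite check on a few explicit blocks is not a credible substitute.

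\emph{The fallback gives only the trivial bound.} It is closer to the paper, but a single neighbourhood of a point in $\TT^4$ on which all $n$ observations stay controlled has width $O(\beta^{-n})$ in the expanding direction. By itself it recovers only $Q_n\gtrsim\beta^{-n}$, i.e.\ $\varrho\ge r$, not $\beta^nQ_n>1$. Two ideas are missing.
\begin{enumerate}
\item[(i)] \emph{Amplification.} Take many disjoint boxes around torus orbits obtained by placing $M$ separated copies of a \emph{packet}. A packet is a non-exact real configuration with integral forcing on a core, geometric real tails, and one outgoing neutral wave of amplitude $\Delta$. Each successive separation ranges over $\asymp N\asymp 1/\Delta$ values, and phase steering (Lemma~\ref{lem:phase-steering}) keeps a fixed proportion of them admissible with the accumulated wave $O(\Delta)$. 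Each copy then gains $\log(1/\Delta)$ at the cost of its action $E$ (Proposition~\ref{prop:packet-amplification}, Corollary~\ref{cor:packet-criterion}).
\item[(ii)] \emph{Cheap packets.} One needs packets with $\Delta_k\to0$, $\Delta_k\ell_k\to0$ and $E_k=o(\log(1/\Delta_k))$. The paper builds them by iterating the reciprocal mirror $K\mapsto(1-X^{-L})K$ at even delays built from best returns of $\zeta^2$ (Lemma~\ref{lem:mirror}, Theorem~\ref{thm:controlled-iteration}). The action at most doubles plus a constant, giving $E_k=O(2^k)$, while $\log(1/\Delta_k)\ge 10(5/2)^k$.
\end{enumerate}
This is where return times genuinely enter: they shrink the non-decaying wave of an \emph{approximate} configuration, not the residue of an exact word. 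Without (i) and (ii), your proposal has no mechanism by which the collision mass beats $\beta^{-n}$.
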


It follows that neither $\nu_{\beta_1^{-1}}$ nor $\nu_{\beta_2^{-1}}$ has an $L^2$ density. The theorem concerns correlation dimension: by itself it does not imply that either measure is singular, nor does it imply a drop in Hausdorff dimension. The proof of Theorem~\ref{thm:main} is independent of the computer-assisted estimates given in the appendices.

\subsection{Previous results and explicit bounds}

In the Pisot case, Garsia's entropy estimate gives Hausdorff dimension strictly smaller than one, and hence also $D_2<1$; see \cite{GarsiaEntropy} and \cite[\S4]{Varju}. For Salem parameters the situation is different. Feng proved a nontrivial range of local dimensions for a Salem family containing the first parameter in \eqref{eq:quartic-polynomials} \cite[Theorem~1.2]{Feng}. This gives detailed information about the local behavior of the measure, but it does not decide whether its correlation dimension is equal to one. Theorem~\ref{thm:main} settles this order-two question for both quartic Salem parameters.

The correlation-dimension problem is related to, but distinct from, the general dimension theory of Bernoulli convolutions. Varj\'u's ICM report \cite[\S2, Theorem~8]{VarjuICM} describes the entropy-rate formula for the Hausdorff dimension of algebraic Bernoulli convolutions, including systems with exact overlaps. Here correlation dimension is governed by squared atom masses in the finite approximations. Shmerkin's $L^q$-dimension formula under exponential separation \cite[Theorem~1.3]{ShmerkinSurvey} provides a natural comparison. Here we estimate the order-two collision rate directly in the presence of exact coincidences.

Fourier decay gives complementary information about Salem Bernoulli convolutions. Marshall-Maldonado and Solomyak \cite[Corollary~B.1]{MMS2026} prove a quantitative Fourier-decay estimate of $\log^*$ type. This concerns pointwise decay of the Fourier transform, whereas Theorem~\ref{thm:main} gives an obstruction to $L^2$ regularity through pair concentration. The two results address different aspects of the regularity problem.

During the final preparation stage of this paper, OpenAI released a manuscript \cite{OpenAI} proving that $\nu_{\beta^{-1}}$ is singular for every quartic Salem number $\beta\in(1,2)$. This settles the measure-type question for the parameters considered here, but does not imply the strict correlation-dimension drop in Theorem~\ref{thm:main}. Their argument is different from the collision-pressure approach developed in this paper.

For the two quartic parameters, Kleptsyn, Pollicott and Vytnova obtained the rigorous lower bounds
\[
 D_2(\nu_{\beta_1^{-1}})>0.997968750,
 \qquad
 D_2(\nu_{\beta_2^{-1}})>0.990312500
\]
\cite[Appendix~A.1]{KPV}. These estimates show that the correlation dimensions are bounded very close to one from below, but they are consistent with full correlation dimension. The appendices of the present paper give stronger lower bounds together with explicit upper bounds.

\begin{theorem}[Explicit bounds]\label{thm:quantitative}
For the two quartic Salem parameters,
\[
 0.99999<D_2(\nu_{\beta_1^{-1}})<1-10^{-65},
 \qquad
 0.99999<D_2(\nu_{\beta_2^{-1}})<1-10^{-75}.
\]
\end{theorem}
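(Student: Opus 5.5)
The plan is to prove the two halves of each estimate separately. The lower bounds $D_2>0.99999$ will come from Fourier/transfer-operator methods in the spirit of \cite{KPV}, run at higher precision. The upper bounds $D_2<1-10^{-65}$ and $D_2<1-10^{-75}$ will come from making the collision-pressure argument behind Theorem~\ref{thm:main} fully explicit.

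For the upper bounds, I would first record the relation between $D_2$ and exact collisions. Let $\nu^{(n)}$ be the law of the partial sum $\sum_{j<n}\varepsilon_j r^j$, with $r=\beta^{-1}$, and let $p_n=\PP\bigl(\sum_{j<n}(\varepsilon_j-\varepsilon_j')r^j=0\bigr)$ be the collision probability of two independent copies. Since $\beta$ is an algebraic integer, distinct atoms of $\nu^{(n)}$ are separated at a scale comparable to $r^n$ up to polynomial factors, and the tail has diameter $O(r^n)$. Hence $I_s(\nu)\gtrsim r^{-sn}p_n$ up to subexponential factors. Therefore
\[
D_2(\nu_{\beta^{-1}})\le \liminf_n \frac{\log p_n}{n\log r},
\]
so it suffices to exhibit explicit $n$ with $p_n\ge 2^{-n}\cdot 2^{n\delta}$, where $\delta\log 2/\log\beta$ produces the stated gap.

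Writing each digit difference in $\{-2,0,2\}$ and projecting to $\ZZ[\beta]\cong\ZZ^4$ via the companion matrix $A$ of $P_i$, I would express $p_n$ as a positive integral over $\TT^4$ of $\prod_j \cos^2(\pi\langle A^j\theta,e\rangle)$, or as a count of returns of the digit walk to $0$. The expanding and contracting conjugates force the walk to stay in a bounded window. The unit-circle pair gives a rotation by angle $\omega$, whose contribution does not decay.

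The explicit construction is then as follows. I would compute, with interval arithmetic, a return time $q$ such that $\|q\omega\|$ is tiny, and take a difference configuration (a short word in $\{-1,0,1\}$) of length $L\approx q$ that cancels the rotational term to within the separation tolerance. I would certify by exact integer arithmetic in $\ZZ^4$ that the word sums to $0$, and record its weight, namely its probability cost $c$ per placement. Placing $m$ disjoint, well-separated copies in a block of length $n$, with free digits elsewhere, gives $p_n\ge 2^{-n}\binom{n-mL}{m}\cdot(\text{gain})^m$. Optimizing $m\approx n/K$ yields an explicit $\delta>0$. The tiny gaps $10^{-65}$ and $10^{-75}$ reflect that $q$ and $L$ must be large, so the gain per unit length is minuscule.

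The main obstacle is to find a certified vanishing configuration and to verify rigorously that it genuinely gains over the critical scale $2^{-n}$. This requires controlling the overcounting between overlapping placements and keeping all constants explicit. I would first try the shortest exact relation coming from $P_i(\beta)=0$ itself, combined with the continued-fraction denominator of $\omega/2\pi$. I would enlarge it only if that fails.

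For the lower bounds, I would bound $I_s$ through $\int|\hat\nu(\xi)|^2|\xi|^{s-1}\,d\xi$, using $\hat\nu(\xi)=\prod_j\cos(r^j\xi)$. I would apply the standard method of estimating the growth of $\int_{\beta^k}^{\beta^{k+1}}|\hat\nu|^2$ via a transfer operator on trigonometric polynomials. Its spectral radius is bounded by the maximum over a finite partition of products of $\cos^2$ over a block of length $N$, certified by interval arithmetic. That gives $D_2\ge 1-\log\rho/\log\beta>0.99999$ once $N$ is taken large enough; the constant $0.99999$ is modest, so this step should be routine computation.
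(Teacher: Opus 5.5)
\textbf{Upper bounds: the benchmark and the mechanism both fail.} First, $2^{-n}$ is not the critical scale. Lemma~\ref{lem:salem-separation} already gives $Q_n\ge c\,n^{-2}\beta^{-n}$, so $p_n\ge 2^{-n}2^{n\delta}$ holds for all large $n$ whenever $\delta<1-\log_2\beta$, and it proves nothing. Correspondingly, $p_n\ge 2^{-n(1-\delta)}$ only yields $D_2\le(1-\delta)\log 2/\log\beta$; it does not give a gap of size $\delta\log2/\log\beta$. What you actually need is $\beta^{sn}Q_n>1$ for one explicit $n$, with $s=1-10^{-65}$ (resp.\ $1-10^{-75}$), as in Corollary~\ref{cor:finite-collision}.

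Measured against $\beta^{-n}$, your mechanism cannot succeed. A word summing exactly to $0$ is a $\{-1,0,1\}$-polynomial $F$ with $P\mid F$, so $F(\zeta)=0$. A vanishing sum of at most four unit vectors $\pm\zeta^{a}$ with distinct exponents would make some $\zeta^{m}$, $m\neq0$, a root of unity. Hence $F$ has $k\ge5$ nonzero digits. Tile $[0,n)$ by zero digits (weight $\tfrac12$) and disjoint copies of one such word of length $L$ (weight $2^{-L-k}$). The total weight is $\asymp\lambda^n$ with $1=\frac1{2\lambda}+2^{-L-k}\lambda^{-L}$. Then $\lambda>r$ would require $\frac\beta2+2^{-k}(\beta/2)^L>1$, which is impossible since $2^{-5}<1-\beta_2/2<1-\beta_1/2$. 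For a finite family of words, the requirement $\frac\beta2+\sum_w 2^{-k_w}(\beta/2)^{L_w}>1$ is just the first-return (renewal) form of $\varrho>r$ itself, so nothing has been reduced.

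The plan is also internally inconsistent. An exact relation already kills the neutral component, so there is nothing for the return time $q$ to cancel. A word that only nearly cancels the rotation contributes nothing to $p_n$ when you count collisions.

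\textbf{The missing idea, and the lower bounds.} The missing idea is to stop enumerating collision words and to use positivity of the integrand in \eqref{eq:torus-formula}. The paper builds \emph{packets}. These are real sequences with nonzero integer forcing (never itself a collision word), bounded localized action $E$, and a small but nonzero outgoing neutral wave of amplitude $\Delta$. They come from iterated delayed differences $K\mapsto(1-X^{-L})K$ at certified returns: $\Delta<6\cdot10^{-37}$, $E<105$ for $P_1$, and $\Delta<3.2\cdot10^{-44}$, $E<100$ for $P_2$. The paper then places $M$ copies with spacings in $[N,2N-1]$, where $N$ is large but $N\Delta^2$ stays small. The phase-steering Lemma~\ref{lem:phase-steering} leaves at least $\lfloor N/11\rfloor$ choices per copy that keep the accumulated neutral wave below $8\Delta$. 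A box of volume $\asymp\beta^{-n}n^{-4}$ around each resulting torus orbit carries integrand at least $e^{-(\text{action})-1}$, and distinct forcing words give disjoint boxes. So each copy gains about $\log(N/11)$ against a cost of about $E$ (Proposition~\ref{prop:packet-amplification}), and $Q_n$ is bounded below without exhibiting a single collision word. With $N=10^{60}$ (resp.\ $10^{70}$) and $M=100$ this gives $\log(\beta^nQ_n)>21$ (resp.\ $>2371$) for some $n<2\cdot10^{62}$ (resp.\ $2\cdot10^{72}$). The stated upper bounds then follow.

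Your lower-bound route also differs from the paper's. The paper certifies $G_1(t_1)<105$ and $G_2(t_2)<624$ by explicit integer supersolutions for the first-stop return chain on $\ZZ^4$ (Theorem~\ref{thm:finite-supersolution}). A Fourier-side certificate is not excluded in principle, but calling $0.99999$ routine is unsupported. The KPV bounds for these very parameters are $0.99796875$ and $0.9903125$. Moreover, a bound by maxima of block products of $\cos^2$ over the cells of a partition, as you describe it, cannot suffice. For Salem $\beta$ such products are arbitrarily close to $1$ at suitable large frequencies, for example $\pi K(\beta)\beta^m$ with $|K(\zeta)|$ small. Genuine averaging has to be built in.
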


The quantities $10^{-65}$ and $10^{-75}$ are lower bounds for the deficits $1-D_2$, not estimates of their actual sizes. Thus Theorem~\ref{thm:quantitative} gives rigorous enclosures rather than high-precision determinations of the two dimensions. The upper and lower bounds are proved by different finite arguments: the upper bounds use explicit packets and prescribed return times, while the lower bounds use finite supersolutions for the complete return process. Appendix~\ref{app:verification} gives the exact arithmetic and verification details.

\subsection{Collision pressure and packet amplification}

Fix one of the two parameters $\beta$, write $r=\beta^{-1}$, and let $P$ be its minimal polynomial. Let $\zeta$ be the unit-circle conjugate of $\beta$ with positive imaginary part. Let
\[
 X_n=\sum_{j=0}^{n-1}\varepsilon_j r^j,
 \qquad
 Q_n=\PP(X_n=X_n'),
\]
where $X_n'$ is an independent copy of $X_n$. If $p_n(x)=\PP(X_n=x)$, then
\[
 Q_n=\sum_x p_n(x)^2.
\]
Thus $Q_n$ measures the quadratic concentration of the $n$th-level approximation to $\nu_r$.

The same quantity has a useful arithmetic interpretation. Reversing the half-difference digits gives
\begin{equation}\label{eq:lazy-return}
\begin{aligned}
 Q_n&=\PP\left(\sum_{j=0}^{n-1}U_j\beta^j=0\right),\\
 \PP(U_j=0)&=\frac12,
 \qquad \PP(U_j=1)=\PP(U_j=-1)=\frac14.
\end{aligned}
\end{equation}
Here the variables $U_j$ are independent. Thus $Q_n$ is an exact return probability for a lazy random walk whose step sizes grow like powers of $\beta$. A related random-walk viewpoint with exponentially increasing step lengths appears in \cite{Neunhaeuserer}.

Exact collisions on two consecutive blocks can be imposed independently, so
\[
 Q_{n+m}\ge Q_nQ_m.
\]
It follows that the exponential collision rate
\[
 \varrho=\lim_{n\to\infty}Q_n^{1/n}
\]
exists. Section~\ref{sec:collisions} proves
\[
 D_2(\nu_{\beta^{-1}})
 =-\frac{\log\varrho}{\log\beta}.
\]
Consequently,
\begin{equation}\label{eq:collision-criterion}
 D_2(\nu_{\beta^{-1}})<1
 \quad\Longleftrightarrow\quad
 \beta^nQ_n>1
 \quad\text{for some }n\ge1.
\end{equation}

This is the quantitative target of the proof. The existence of exact collisions is not enough: at some scale their total probability must exceed the critical value $\beta^{-n}$. Different symbolic words may already represent the same atom in $p_n$; the separation estimate used later concerns distinct atoms after these coincidences have been identified, not exponential separation of all symbolic words.

The next step is to represent the collision probability as a positive integral over a torus. Let $T_P$ be the toral automorphism associated with the defining polynomial $P$, and let $y_j(x)$ denote the first coordinate, represented in $[-1/2,1/2)$, of the $j$th iterate of $x\in\TT^4$. Section~\ref{sec:torus} proves
\[
 Q_n=\int_{\TT^4}\prod_{j=0}^{n-1}\cos^2\bigl(\pi y_j(x)\bigr)\,dx.
\]
Here $dx$ is Haar probability measure on $\TT^4$. Indeed, after expanding the cosine factors, integration over the torus retains exactly the terms corresponding to the collision relations. The connection between Salem arithmetic, toral automorphisms, and symbolic representations already appears in the work of Lindenstrauss and Schmidt \cite{LindenstraussSchmidt}.

The positivity of the integrand is important for the lower bound. A suitable recurrence configuration gives a small region of the torus on which the product can be estimated from below. We later arrange many such regions to be disjoint, so their contributions can be added without cancellation. The remaining task is therefore to construct enough recurrence configurations for which these lower bounds remain sufficiently large.

We call a suitable recurrence configuration a \emph{packet}. It consists of a real bilateral sequence $Y$ for which the recurrence defined by $P$ produces a nonzero integer-valued sequence supported on a finite interval, called the core. Away from the core, $Y$ has geometrically decaying real tails together with one outgoing oscillatory term coming from the unit-circle conjugates. Write $\ell$ for the core length and translate the core to $[0,\ell-1]$. The outgoing term is $2\Re(\gamma\zeta^{j-\ell})$ for $j\ge\ell$, with $\gamma\ne0$, and is absent for $j<\ell$. Removing this term gives the localized sequence $H$, whose coordinates lie in $(-1/2,1/2)$ and whose tails decay geometrically on both sides. We write
\[
 \Delta=2|\gamma|,
 \qquad
 E=\sum_{j \in \mathbb{Z}} -\log\cos^2(\pi H_j).
\]
Thus $\Delta$ measures the size of the outgoing oscillation. The action $E$ is the sum of the logarithmic losses of the cosine factors for $H$; it measures the cost of the localized configuration in the torus integral.

The amplification step uses many separated copies of the same packet. Suppose that the localized configurations stay a common positive distance from the endpoints $\pm1/2$, where the cosine factor vanishes, and satisfy a common geometric bound on their tails. If a packet has neutral amplitude $\Delta$, we place its copies at separations of order
\[
 N\asymp\Delta^{-1}.
\]
For $N>2\ell+4$, allow each successive separation to range over the integers from $N$ to $2N-1$. At every step, a fixed positive proportion of these choices keeps the combined outgoing oscillation bounded by a constant multiple of $\Delta$, independently of the number of copies. Thus each added copy gives a logarithmic gain of order
\[
 \log N\asymp\log(1/\Delta).
\]
Its principal cost is the action $E$. In addition, the small residual oscillation perturbs a core of length $\ell$, and this error is negligible when $\Delta\ell$ is small.

Corollary~\ref{cor:packet-criterion} therefore reduces the proof, in the quartic setting, to constructing packets with the uniform bounds above such that
\begin{equation}\label{eq:intro-packet-criterion}
 \Delta_k\longrightarrow0,
 \qquad
 \Delta_k\ell_k\longrightarrow0,
 \qquad
 E_k=o\bigl(\log(1/\Delta_k)\bigr).
\end{equation}
The first condition provides increasingly many possible placements, the second controls the accumulated perturbation on the growing core, and the third ensures that the cost of each packet is smaller than the logarithmic gain from those placements. Once these conditions hold, the amplification argument gives $\beta^nQ_n>1$ for some $n$, and hence $D_2(\nu_{\beta^{-1}})<1$ by \eqref{eq:collision-criterion}.

We construct the required packets by taking reciprocal differences at carefully chosen return times of the rotation coming from the unit-circle conjugates. An algebraic construction first provides an initial packet whose localized coordinates stay a positive distance from $\pm1/2$. For the reciprocal packets constructed in Section~\ref{sec:reciprocal}, the step in Lemma~\ref{lem:mirror} uses a suitably chosen even delay $L>\ell$ and changes the outgoing amplitude to
\[
 \Delta^{+}=\Delta|1-\zeta^L|.
\]
A better return makes the factor $|1-\zeta^L|$ smaller, but a longer delay also makes the new packet longer and can increase its action. The chosen delays must separate the two copies of the old core while keeping the length and action under control. The controlled iteration does so while preserving the common boundary margin and tail bounds: it gives $E_k=O(2^k)$, makes $\log(1/\Delta_k)/2^k$ tend to infinity, and ensures $\Delta_k\ell_k\to0$. Hence the three conditions in \eqref{eq:intro-packet-criterion} are satisfied. The return times are obtained from finite best-return minimization and the pigeonhole principle; no bounded-type assumption or quantitative irrationality exponent is imposed on the rotation.

\subsection{Scope and organization}

The argument developed here treats the complete quartic Salem class in the overlapping range. Its main feature is the control of a single pair of unit-circle conjugates. We do not claim a corresponding result in higher Salem degree, where several oscillatory pairs would have to be controlled simultaneously.

Section~\ref{sec:collisions} relates correlation dimension to exact collision probabilities. Section~\ref{sec:torus} gives the positive torus representation and the amplification criterion. Sections~\ref{sec:reciprocal} and \ref{sec:returns} construct the reciprocal packets and prove Theorem~\ref{thm:main}. The appendices give the explicit upper and lower bounds in Theorem~\ref{thm:quantitative}, together with the exact verification details.

\section{Algebraic preliminaries and collision probabilities}\label{sec:collisions}
Fix one of the two quartic polynomials in \eqref{eq:quartic-polynomials} and, when no confusion can arise, write
\begin{equation}\label{eq:general-quartic}
 P(X)=X^4-AX^3+BX^2-AX+1,
\end{equation}
where $(A,B)=(1,-1)$ or $(2,1)$.  We write $\Shift$ for the bilateral shift, $(\Shift y)_j=y_{j+1}$, so that
\begin{equation}\label{eq:recurrence}
 (P(\Shift)y)_j=y_j-Ay_{j+1}+By_{j+2}-Ay_{j+3}+y_{j+4}.
\end{equation}

\begin{proposition}\label{prop:quartic-class}
The monic irreducible reciprocal quartics having a Salem root in $(1,2)$ are precisely $P_1$ and $P_2$.  For $i=1,2$, write their roots as
\[
 \beta_i,
 \qquad r_i=\beta_i^{-1},
 \qquad \zeta_i,
 \qquad \bar\zeta_i,
 \quad \Im\zeta_i>0.
\]
When one parameter is fixed, we suppress the index and write $P,\beta,r,\zeta$.  These roots satisfy
\begin{equation}\label{eq:root-bounds}
 \frac53<\beta<2,
 \qquad \frac12<r<\frac35,
 \qquad |\zeta|=1.
\end{equation}
Moreover,
\begin{equation}\label{eq:unit-circle-bounds}
 |1-\zeta|>\frac32,
 \qquad |1-\zeta^2|>\frac32,
 \qquad |P'(\zeta)|>1,
\end{equation}
and neither $\zeta$ nor $\zeta^2$ is a root of unity.
\end{proposition}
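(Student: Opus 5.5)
The plan is to reduce everything to the trace $t=\beta+\beta^{-1}$. A monic reciprocal quartic can be written
\[
X^4-AX^3+BX^2-AX+1=X^2\,q\bigl(X+X^{-1}\bigr),\qquad q(T)=T^2-AT+(B-2),
\]
with $A,B\in\ZZ$. Its roots are then determined by the two roots $t,s$ of $q$, each $T$ giving the pair $X+X^{-1}=T$.

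First I would translate the Salem condition. For a Salem root we need one pair of real roots $\beta,\beta^{-1}$ and one pair on the unit circle. This means one root $t>2$ and the other root $s\in(-2,2)$; the case $s=\pm2$ is excluded by irreducibility. The requirement $\beta\in(1,2)$ becomes $t\in(2,5/2)$.

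Next I would classify. Since $t+s=A$ and $ts=B-2$, and $s\in(-2,2)$, we get
\[
A=t+s\in(0,\,9/2),
\]
so $A\in\{1,2,3,4\}$. For each $A$, the constraints $q(2)<0$, $q(5/2)>0$ and $q(-2)>0$ force the integer $B$ into a short range, namely $2A-2<B<5A/2-17/4$, together with $q(-2)>0$. Running through these cases gives the following.
\begin{itemize}
\item $A=1$ gives $B=-1$, hence $P_1$.
\item $A=2$ gives $B=1$, hence $P_2$.
\item $A=3,4$ give no admissible $B$; for $A=3$ one needs $4<B<3.25$, which is impossible.
\end{itemize}
I would double-check these by direct evaluation. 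Irreducibility of $P_1,P_2$ follows because $q$ is irreducible over $\QQ$ (its discriminant $A^2-4B+8$ equals $13$ and $8$, not squares). Moreover $P$ has no quadratic factor over $\QQ$ that is not of the form $X^2-TX+1$ with $T$ a root of $q$, since the real roots $\beta^{\pm1}$ are irrational and a rational factor would have to contain them.

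The bounds then become explicit. For $P_1$, $t=(1+\sqrt{13})/2\approx2.303$ and $s\approx-1.303$. For $P_2$, $t=1+\sqrt2\approx2.414$ and $s=1-\sqrt2\approx-0.414$. Thus $t\in(2.3,2.42)$, and solving $\beta+\beta^{-1}=t$ gives $\beta\in(5/3,2)$; this is equivalent to $t\in(34/15,5/2)$, and $34/15\approx2.267$. Then $r=\beta^{-1}\in(1/2,3/5)$.

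For the unit-circle bounds, write $\zeta=e^{i\theta}$ with $2\cos\theta=s$. Then
\[
|1-\zeta|^2=2-s,\qquad |1-\zeta^2|^2=4-s^2 .
\]
For $P_1$ these are about $3.30$ and $2.30$; for $P_2$ about $2.41$ and $3.83$. All exceed $9/4$. For the derivative, factor $P(X)=(X^2-tX+1)(X^2-sX+1)$, so that
\[
P'(\zeta)=(\zeta^2-t\zeta+1)(2\zeta-s)=\zeta(s-t)\,(\zeta-\bar\zeta),
\]
using $\zeta^2+1=s\zeta$. Hence
\[
|P'(\zeta)|=(t-s)\cdot 2\sin\theta=(t-s)\sqrt{4-s^2}.
\]
This gives $\sqrt{13}\cdot\sqrt{2.30}>1$ and $2\sqrt2\cdot\sqrt{3.83}>1$.

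The step needing the most care is that $\zeta$ and $\zeta^2$ are not roots of unity. If $\zeta$ were a root of unity, all its conjugates would be, but $\beta$ is a conjugate of $\zeta$ and is not. If $\zeta^2$ were a root of unity, then so would be $\zeta$. Everything else is routine verification of the small finite case analysis and the numerical inequalities.
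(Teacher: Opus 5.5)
Your argument follows the paper's proof. You reduce to the trace polynomial and get $A\in\{1,2,3,4\}$ from $A=t+s$. You then pin down $B$ by integrality and use $|1-\zeta|^2=2-s$, $|1-\zeta^2|^2=4-s^2$, $|P'(\zeta)|=(t-s)\sqrt{4-s^2}$, and the conjugation argument for roots of unity.

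The only variation is irreducibility. The paper shows that $u^2-4$ is not a square in $\QQ(u)$, because its other real embedding is $v^2-4<0$. You argue instead by root pairing, which also works, but your stated reason ("a rational factor would have to contain them") is not quite the right one. The clean reason has three parts. A quadratic factor with real coefficients cannot pair a real root with a nonreal one. So its roots are $\{\beta,\beta^{-1}\}$ or $\{\zeta,\bar\zeta\}$, which forces the irrational middle coefficient $t$ or $s$. Finally, there is no rational root, so there is no linear factor either.

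One slip needs fixing. From $q(2)=B-2A+2<0$ and $q(5/2)=B-5A/2+17/4>0$ you get $5A/2-17/4<B<2A-2$, not the reverse. As you wrote it, the interval is empty for $A=1$ and $A=2$ as well, which contradicts your own list of cases. With the correct direction the four intervals are $(-7/4,0)$, $(3/4,2)$, $(13/4,4)$, $(23/4,6)$, exactly the table in the paper. In particular, for $A=3$ the obstruction is that $(13/4,4)$ contains no integer, not that the interval is empty.
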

\begin{proof}
Let \eqref{eq:general-quartic} be the minimal polynomial of a quartic Salem number $\beta\in(1,2)$ and put
\[
 u=\beta+\beta^{-1},\qquad v=\zeta+\zeta^{-1}.
\]
Then $2<u<5/2$, $-2<v<2$, and the trace polynomial is
$T^2-AT+B-2$.  Hence $A=u+v\in\{1,2,3,4\}$ and
$B=Au-u^2+2$.  As $u$ ranges over $(2,5/2)$ the possible ranges of the latter expression are
\[
\begin{array}{c|cccc}
 A&1&2&3&4\\ \hline
 Au-u^2+2&(-7/4,0)&(3/4,2)&(13/4,4)&(23/4,6).
\end{array}
\]
Only $(A,B)=(1,-1)$ and $(2,1)$ give an integral value of $B$.

Conversely,
\[
 P_i(X)=X^2F_i(X+X^{-1}),
 \qquad
 F_1(T)=T^2-T-3,
 \quad
 F_2(T)=T^2-2T-1.
\]
The two roots of the trace polynomial are
\begin{equation}\label{eq:trace-roots}
\begin{aligned}
 u_1&=\frac{1+\sqrt{13}}2,&v_1&=\frac{1-\sqrt{13}}2,\\
 u_2&=1+\sqrt2,&v_2&=1-\sqrt2.
\end{aligned}
\end{equation}
The inequalities $2<u_i<5/2$ and $-2<v_i<2$ give one real reciprocal pair and one nonreal unit-circle pair.  In $K_0=\QQ(u_i)$, the other real embedding sends $u_i^2-4$ to $v_i^2-4<0$.  Thus $u_i^2-4$ is not a square in $K_0$, while
$X^2-u_iX+1$ has roots $\beta_i,\beta_i^{-1}$.  It follows that $[\QQ(\beta_i):\QQ]=4$, proving irreducibility.

The values $u_i$ in \eqref{eq:trace-roots} exceed $34/15=5/3+3/5$.  Since $x+x^{-1}$ is increasing on $(1,\infty)$, this gives \eqref{eq:root-bounds}.  They also give $v_i<-1/4$ and $v_i^2<7/4$, and therefore
\[
 |1-\zeta_i|^2=2-v_i>\frac94,
 \qquad
 |1-\zeta_i^2|^2=4-v_i^2>\frac94.
\]
Differentiating the trace factorization at $\zeta_i$ yields
\[
 |P_i'(\zeta_i)|=(u_i-v_i)\sqrt{4-v_i^2}>1.
\]
Finally, if $\zeta_i$ were a root of unity, irreducibility would force all conjugates of $\zeta_i$ to have modulus one, contrary to $\beta_i>1$; the same applies to $\zeta_i^2$.
\end{proof}

\begin{lemma}\label{lem:homogeneous-modes}
Every real bilateral solution of $P(\Shift)y=0$ is a real linear combination of
$\beta^j$, $r^j$, $\cos(j\varphi)$ and $\sin(j\varphi)$, where
$\zeta=e^{i\varphi}$.  If such a solution tends to zero as $j\to-\infty$ and is bounded as $j\to+\infty$, then it vanishes identically.
\end{lemma}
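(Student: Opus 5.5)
The first claim is standard linear-recurrence theory. The proof then reduces the vanishing statement to elementary asymptotics of each mode.

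\emph{Step 1: the solution space.} The recurrence $P(\Shift)y=0$ is a fourth-order linear recurrence with leading and trailing coefficients equal to $1$. A bilateral solution is therefore determined uniquely by the four values $(y_0,y_1,y_2,y_3)$, and can be extended in both directions. So the real solution space has dimension four.

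By Proposition~\ref{prop:quartic-class}, the roots $\beta,r,\zeta,\bar\zeta$ of $P$ are distinct. Each complex sequence $\lambda^j$ with $P(\lambda)=0$ is a solution, since $(P(\Shift)\lambda^{\bullet})_j=\lambda^jP(\lambda)=0$. Taking real and imaginary parts of $\zeta^j$ gives the real solutions $\cos(j\varphi)$ and $\sin(j\varphi)$.

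These four sequences are linearly independent. Indeed, their span over $\CC$ equals the span of the four geometric sequences with distinct ratios, and these are independent by a Vandermonde determinant on $j=0,1,2,3$. Hence they form a basis, and every real solution is a real linear combination
\[
 y_j=a\beta^j+b r^j+c\cos(j\varphi)+d\sin(j\varphi).
\]

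\emph{Step 2: the terms $b$ and $a$.} Suppose $y_j\to0$ as $j\to-\infty$ and $y$ is bounded as $j\to+\infty$.

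As $j\to-\infty$ we have $r^j=\beta^{-j}\to\infty$ exponentially, while the other three terms stay bounded. Dividing by $r^j$ shows $b=0$. Symmetrically, as $j\to+\infty$ we have $\beta^j\to\infty$, while the remaining terms are bounded. Boundedness of $y$ then forces $a=0$.

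\emph{Step 3: the oscillatory part.} We are left with $z_j=c\cos(j\varphi)+d\sin(j\varphi)=\Re(w\zeta^{j})$, where $w=c-id$. This sequence must tend to $0$ as $j\to-\infty$.

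By Proposition~\ref{prop:quartic-class}, $\zeta$ is not a root of unity. Hence $\{\zeta^{j}:j\le0\}$ is dense in the unit circle; in particular, infinitely many negative $j$ have $\zeta^j$ within any prescribed arc.

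If $w\neq0$, choose infinitely many $j\to-\infty$ with $\zeta^j$ close to $\bar w/|w|$. For these $j$, $z_j$ is close to $|w|$, which contradicts $z_j\to0$. So $w=0$, hence $c=d=0$, and $y\equiv0$.

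This density argument is the only mildly delicate point. A simpler alternative that avoids density uses only $\zeta^2\ne1$: the relation $z_{j}^2+z_{j+1}^2\ge\kappa|w|^2$ holds with some $\kappa>0$, because the two unit vectors $\zeta^j$ and $\zeta^{j+1}$ differ by a fixed non-real rotation. Either route works.
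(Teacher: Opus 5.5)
Your proof is correct, and it follows the paper's structure: a Vandermonde basis, then removal of the modes by their asymptotics. Your order of removal differs slightly. The paper eliminates the oscillatory part before the $\beta^j$ term, while you remove $\beta^j$ first. This is immaterial, since the oscillatory terms are bounded.

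The one real difference is in Step~3. Your main argument uses density of $\{\zeta^j: j\le 0\}$ in the unit circle, which needs the full fact that $\zeta$ is not a root of unity. The paper instead uses exactly your ``simpler alternative.'' Since $\sin\varphi\neq0$, the pair $(z_j,z_{j+1})$ determines the cosine and sine coefficients through a fixed invertible $2\times2$ matrix, so $z_j,z_{j+1}\to0$ forces $c=d=0$.

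The paper's route needs only that $\zeta$ is non-real. It also gives a uniform bound $|w|^2\le\kappa^{-1}(z_j^2+z_{j+1}^2)$ at every $j$, so it is the more elementary and more robust of the two. Your density route is nonetheless valid here, because Proposition~\ref{prop:quartic-class} does establish that $\zeta$ is not a root of unity.
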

\begin{proof}
The four displayed modes are linearly independent by the Vandermonde determinant, and four consecutive coordinates determine a bilateral solution.  Hence they span the real solution space.  Decay at $-\infty$ eliminates the $r^j$ term.  Since $\sin\varphi\ne0$, two consecutive oscillatory observations recover the cosine and sine coefficients through a fixed invertible $2\times2$ matrix; their convergence to zero at $-\infty$ therefore eliminates both oscillatory terms.  Boundedness at $+\infty$ then eliminates the $\beta^j$ term.
\end{proof}

Let
\[
 X_n=\sum_{j=0}^{n-1}\varepsilon_jr^j,
\]
let $\mathcal A_n$ be its support, and put $p_n(x)=\PP(X_n=x)$.  If $X_n'$ is an independent copy, then
\begin{equation}\label{eq:collision-squares}
 Q_n=\PP(X_n=X_n')=\sum_{x\in\mathcal A_n}p_n(x)^2.
\end{equation}
Collisions on two consecutive blocks are independent, so
$Q_{n+m}\ge Q_nQ_m$.  Fekete's lemma gives the collision growth rate
\begin{equation}\label{eq:collision-rate}
 \varrho=\lim_{n\to\infty}Q_n^{1/n}
 =\sup_{n\ge1}Q_n^{1/n}.
\end{equation}
We also put $Q_0=1$ and
\[
 G(t)=\sum_{n=0}^{\infty}t^nQ_n.
\]

\begin{lemma}\label{lem:salem-separation}
For distinct $x,y\in\mathcal A_n$,
\begin{equation}\label{eq:atom-separation}
 |x-y|\ge 2(\beta-1)r^nn^{-2}.
\end{equation}
It follows that $r\le\varrho<1$.  If $X,X'$ are independent with law $\nu_r$ and
$\mathcal C(t)=\PP(|X-X'|\le t)$, then there is a constant $C_{\rm sep}<\infty$ such that
\begin{equation}\label{eq:near-collisions}
 \mathcal C(r^n)\le C_{\rm sep}(1+n^2)Q_n
 \qquad(n\ge1).
\end{equation}
In particular, $\PP(X=X')=0$.
\end{lemma}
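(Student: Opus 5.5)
The separation estimate \eqref{eq:atom-separation} will come from the standard Salem argument, based on conjugates and a norm bound. After that, the remaining claims follow by counting.

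Take distinct $x,y\in\mathcal A_n$. Then $x-y=2\sum_{j<n}a_jr^j$ with $a_j\in\{-1,0,1\}$. Multiplying by $\beta^{n-1}$ gives the algebraic integer
\[
 \alpha=\tfrac12\beta^{n-1}(x-y)=\sum_{j<n}a_j\beta^{n-1-j}\in\ZZ[\beta],
\]
and $\alpha\ne0$. Its conjugates are obtained by replacing $\beta$ by $r$, $\zeta$, $\bar\zeta$:
\begin{itemize}
\item at $r$, the conjugate has modulus at most $\sum r^k<1/(1-r)=\beta/(\beta-1)$;
\item at $\zeta$ and $\bar\zeta$, each conjugate has modulus at most $n$, by the triangle inequality.
\end{itemize}
The norm of a nonzero algebraic integer is a nonzero rational integer, so
\[
 1\le|\alpha|\cdot\frac{\beta}{\beta-1}\cdot n^2 .
\]
Hence $|x-y|=2r^{n-1}|\alpha|\ge 2r^{n-1}(\beta-1)\beta^{-1}n^{-2}=2(\beta-1)r^nn^{-2}$, which is \eqref{eq:atom-separation}. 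The delicate point is bounding the unit-circle conjugates by $n$ rather than by something exponential. This is exactly where the Salem hypothesis enters, and it is why only a polynomial loss appears.

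For $r\le\varrho$: Cauchy--Schwarz gives $Q_n\ge 1/\#\mathcal A_n\ge 2^{-n}$, which only yields $\varrho\ge1/2$. Instead I will use separation. All atoms lie in $[-\tfrac{1}{1-r},\tfrac1{1-r}]$ and are $2(\beta-1)r^nn^{-2}$-separated, so $\#\mathcal A_n\le Cn^2\beta^n$. Then Cauchy--Schwarz gives $Q_n\ge (Cn^2)^{-1}r^n$, and taking $n$th roots gives $\varrho\ge r$.

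For $\varrho<1$: by \eqref{eq:collision-rate} it suffices to show $\varrho\ne1$, and I plan to prove this via \eqref{eq:near-collisions}. Since $Q_n\le \max_x p_n(x)$, an alternative is to show that $p_n$ has no atom of mass larger than $c^n$. Independently of that, the route through \eqref{eq:near-collisions} also covers the final claim. Write $X=X_n+r^nZ$ and $X'=X_n'+r^nZ'$ with $|Z|,|Z'|\le\beta/(\beta-1)$. If $|X-X'|\le r^n$, then $|X_n-X_n'|\le Kr^n$ for a constant $K$. For fixed $x$, the number of atoms $y$ with $|x-y|\le Kr^n$ is at most $1+Kn^2/(\beta-1)$ by \eqref{eq:atom-separation}. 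Therefore
\[
 \mathcal C(r^n)\le\sum_x p_n(x)\sum_{|y-x|\le Kr^n}p_n(y)\le\sum_x\sum_y\tfrac12\bigl(p_n(x)^2+p_n(y)^2\bigr),
\]
where the double sum runs over such pairs. Each atom appears in $O(n^2)$ pairs, so the right-hand side is $O\bigl((1+n^2)Q_n\bigr)$, giving \eqref{eq:near-collisions}. Letting $n\to\infty$ shows $\PP(X=X')=0$ provided $Q_n\to0$ faster than $n^{-2}$, which follows once $\varrho<1$.

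To get $\varrho<1$ directly, I will show $Q_n$ decays exponentially. Compare $Q_{2m}$ with $Q_m$: in each block of consecutive digits, the walk \eqref{eq:lazy-return} fails to return to $0$ with probability bounded below. This uses that $\beta^jU_j$ with $U_j=\pm1$ cannot cancel the earlier partial sums, which are bounded by $\beta^{j}/(\beta-1)$, unless a specific relation holds. Concretely, I expect $Q_{n+1}\le\tfrac34Q_n$-type bounds to fail, so I would instead use the crude bound that the top digit pattern $(U_{n-1},U_{n-2})$ must avoid combinations forcing $|\sum|>0$, yielding $Q_{n+2}\le(1-c)Q_n$ from the constraint $U_{n-1}=\pm1$, $U_{n-2}=\pm1$ of the same sign. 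This last step, making $\varrho<1$ rigorous, is the part I am least sure of.
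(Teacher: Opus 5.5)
Your separation argument via the norm, the atom count giving $\varrho\ge r$, and the neighbor-counting proof of \eqref{eq:near-collisions} are correct and coincide with the paper's. The genuine gap is $\varrho<1$, which you leave unproved; the conclusion $\PP(X=X')=0$ depends on it.

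The first route you mention cannot work. \eqref{eq:near-collisions} bounds $\mathcal C(r^n)$ \emph{above} by $(1+n^2)Q_n$, so it gives no information forcing $Q_n$ to decay.

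Your top-two-digits idea is sound in its geometric part. If you compare $Q_{n+2}$ with $Q_n$, the relevant digits are $U_{n+1},U_n$, not $U_{n-1},U_{n-2}$. If $U_{n+1}=U_n=\pm1$, then $|U_{n+1}\beta^{n+1}+U_n\beta^n|=\beta^n(\beta+1)>\beta^n/(\beta-1)>|\sum_{j<n}U_j\beta^j|$, because $\beta^2>2$. So no return is possible on that event, which has probability $1/8$. But excluding it yields $Q_{n+2}\le\frac78Q_n$ only if you also control the other top-digit choices $(u,u')$. For each of them you need $\PP\bigl(\sum_{j<n}U_j\beta^j=z\bigr)\le Q_n$ with $z=-u\beta^{n+1}-u'\beta^n$. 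For $z\ne0$ this is not a return probability, and nothing in your proposal bounds it.

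The missing ingredient is a one-line Cauchy--Schwarz estimate. Since $U_j=(\varepsilon_j-\varepsilon_j')/2$, the sum $\sum_{j<n}U_j\beta^j$ has the law of $(W-W')/2$, where $W=\sum_{j<n}\varepsilon_j\beta^j$ and $W'$ is an independent copy. Then $\PP(W-W'=w)=\sum_x\PP(W=x)\PP(W=x-w)\le\sum_x\PP(W=x)^2=Q_n$. With this, your argument gives $\varrho\le(7/8)^{1/2}$.

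The paper instead carries out the alternative you mention but drop: bounding the largest atom. Choose $k$ with $r^k<1/2$ and condition on all signs except those at positions $0,k,2k,\ldots$. The first differing term of this sparse subseries dominates all later ones, so distinct sparse sign choices give distinct sums. Hence every atom of $p_n$ has mass at most $2^{-\lceil n/k\rceil}$, and $Q_n\le\max_x p_n(x)$ gives $\varrho\le 2^{-1/k}<1$.
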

\begin{proof}
For distinct atoms, write
$x-y=2r^{n-1}F(\beta)$, where $F$ has degree at most $n-1$ and coefficients in $\{-1,0,1\}$.  Since $F(\beta)$ is a nonzero algebraic integer,
\[
 1\le |N_{\QQ(\beta)/\QQ}(F(\beta))|,
 \qquad
 |F(r)|\le(1-r)^{-1},
 \qquad
 |F(\zeta)|\le n.
\]
The same bound holds at $\bar\zeta$, so
$|F(\beta)|\ge(1-r)n^{-2}$, which gives \eqref{eq:atom-separation}.  All atoms lie in an interval of length $2/(1-r)$; hence
$|\mathcal A_n|\le 1+C r^{-n}n^2$ for a constant depending only on $\beta$.  By Cauchy--Schwarz and \eqref{eq:collision-squares},
$Q_n\ge|\mathcal A_n|^{-1}$, and therefore $\varrho\ge r$.

To see that $\varrho<1$, choose $k$ with $r^k<1/2$ and condition on all signs except those at the positions $0,k,2k,\ldots$.  Distinct choices of these remaining signs give distinct sums, because the first differing term dominates all later terms in this sparse subseries.  Hence every atom has mass at most $2^{-\lceil n/k\rceil}$, so \eqref{eq:collision-squares} gives the same upper bound for $Q_n$.  Thus $\varrho\le2^{-1/k}<1$.

Finally, $|X-X_n|\le r^n/(1-r)$.  A near collision $|X-X'|\le r^n$ therefore implies
$|X_n-X_n'|\le H_*r^n$, where $H_*=1+2/(1-r)$.  By \eqref{eq:atom-separation}, every atom has at most
$1+H_*n^2/(\beta-1)$ neighbors at this distance.  Summing over the corresponding symmetric neighbor relation and using
$p_n(x)p_n(y)\le\frac12(p_n(x)^2+p_n(y)^2)$ proves \eqref{eq:near-collisions}.  Its right-hand side tends to zero because $\varrho<1$, so $\PP(X=X')=0$.
\end{proof}

\begin{theorem}\label{thm:collision-pressure}
For either quartic parameter,
\begin{equation}\label{eq:dimension-pressure}
 D_2(\nu_r)=-\frac{\log\varrho}{\log\beta},
 \qquad r\le\varrho<1.
\end{equation}
In particular,
\begin{equation}\label{eq:collision-criterion-body}
 D_2(\nu_r)<1
 \quad\Longleftrightarrow\quad
 \varrho>r
 \quad\Longleftrightarrow\quad
 \beta^nQ_n>1\ \text{for some }n\ge1.
\end{equation}
\end{theorem}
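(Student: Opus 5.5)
We prove \eqref{eq:dimension-pressure} by comparing the energy $I_s(\nu_r)$ with the generating function $G(t)$ at $t=\beta^s$. The two inequalities $D_2\ge -\log\varrho/\log\beta$ and $D_2\le -\log\varrho/\log\beta$ are handled separately. The range $r\le\varrho<1$ is already Lemma~\ref{lem:salem-separation}.

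\emph{Layer-cake formula.} Since $\PP(X=X')=0$, we can write
\[
 I_s(\nu_r)=\mathbb E|X-X'|^{-s}=\sum_{n\ge0}\mathbb E\bigl[|X-X'|^{-s};\,r^{n+1}<|X-X'|\le r^n\bigr]
\]
after normalizing so that $|X-X'|\le 1$ up to a bounded initial contribution; the support has diameter $2/(1-r)$, so only finitely many extra layers appear. On the $n$th layer the integrand is comparable to $\beta^{ns}$. Hence
\[
 \sum_n \beta^{ns}\bigl(\mathcal C(r^n)-\mathcal C(r^{n+1})\bigr)\lesssim I_s\lesssim \sum_n\beta^{(n+1)s}\mathcal C(r^n).
\]

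\emph{Lower bound on $D_2$.} By \eqref{eq:near-collisions}, $\mathcal C(r^n)\le C_{\rm sep}(1+n^2)Q_n$. Since $\varrho=\sup_n Q_n^{1/n}$, we have $Q_n\le\varrho^n$. Therefore $I_s\lesssim\sum_n n^2(\beta^s\varrho)^n<\infty$ whenever $\beta^s\varrho<1$, that is, whenever $s<-\log\varrho/\log\beta$. This gives $D_2\ge-\log\varrho/\log\beta$.

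\emph{Upper bound on $D_2$.} This requires a lower bound on $\mathcal C(r^n)$ in terms of $Q_n$. If $X_n=X_n'$, then $|X-X'|=|\sum_{j\ge n}(\varepsilon_j-\varepsilon_j')r^j|$. The tails are independent of the first $n$ digits and are distributed as $r^n(Y-Y')$ with $Y,Y'\sim\nu_r$. Hence
\[
 \mathcal C(r^n)\ge Q_n\,\PP(|Y-Y'|\le1)=c\,Q_n
\]
for some $c>0$; the probability is positive since, for instance, the event that the first $m$ digits of $Y$ and $Y'$ agree forces $|Y-Y'|\le 2r^m/(1-r)\le1$ for large $m$. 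Using the simpler bound $I_s\ge r^{-ns}\mathcal C(r^n)\ge c\,\beta^{ns}Q_n$ for each $n$, finiteness of $I_s$ forces $\beta^{s}Q_n^{1/n}\le (I_s/c)^{1/n}$. Letting $n\to\infty$ gives $\beta^s\varrho\le1$. So $I_s<\infty$ implies $s\le-\log\varrho/\log\beta$, which completes \eqref{eq:dimension-pressure}. Note that $-\log\varrho/\log\beta\le1$ because $\varrho\ge r$, so the sup in the definition of $D_2$ over $(0,1)$ causes no truncation issue.

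\emph{The equivalences.} By \eqref{eq:dimension-pressure}, $D_2<1$ iff $\varrho>r$ iff $\beta\varrho>1$. Since $\varrho=\sup_n Q_n^{1/n}$, the inequality $\beta\varrho>1$ holds iff $\beta Q_n^{1/n}>1$ for some $n$, that is, iff $\beta^nQ_n>1$ for some $n$. The only delicate point in the whole argument is the near-collision comparison. Its hard direction, $\mathcal C(r^n)\lesssim n^2Q_n$ via Salem separation, is already supplied by Lemma~\ref{lem:salem-separation}, so I expect no real obstacle beyond bookkeeping the layer constants.
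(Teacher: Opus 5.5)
Your proposal is correct and follows essentially the same route as the paper. The lower bound on $D_2$ uses the shell decomposition together with \eqref{eq:near-collisions} and $Q_n\le\varrho^n$, and the upper bound uses the fact that an exact collision at depth $n$ forces $|X-X'|\lesssim r^n$, so that $I_s\gtrsim\beta^{ns}Q_n$. The paper differs only cosmetically: it fixes $k$ with $\beta^{sk}Q_k>1$, repeats $m$ colliding blocks with the crude tail bound $2r^{mk}/(1-r)$, and lets $m\to\infty$, whereas you use the self-similar factor $\PP(|Y-Y'|\le1)>0$ and take $n$th roots as $n\to\infty$.
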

\begin{proof}
Set $a=-\log\varrho/\log\beta\in(0,1]$.  If $0<s<a$, choose
$\varrho<\lambda<\beta^{-s}$.  
Then $Q_n\le\lambda^n$ for all sufficiently large $n$.  A decomposition
into the shells $r^{n+1}<|X-X'|\le r^n$ gives
\[
 I_s(\nu_r)
 \le 1+r^{-s}\mathcal C(1)
   +\sum_{n=1}^{\infty}r^{-s(n+1)}\mathcal C(r^n).
\]
The first two terms and any finite initial segment of the sum are harmless.
By \eqref{eq:near-collisions}, the tail is bounded by a constant multiple of
\[
 \sum_{n=1}^{\infty}(1+n^2)(\beta^s\lambda)^n,
\]
which converges since $\beta^s\lambda<1$.  Thus
$I_s(\nu_r)<\infty$ and hence $D_2(\nu_r)\ge a$.

Now suppose $a<s<1$.  By \eqref{eq:collision-rate}, some $k\ge1$ satisfies
$\beta^{sk}Q_k>1$.  Require $m$ independent consecutive blocks of length $k$ to collide.  This event has probability $Q_k^m$, and on it
$|X-X'|\le2r^{mk}/(1-r)$.  Hence
\[
 I_s(\nu_r)
 \ge \left(\frac{1-r}{2}\right)^s
      (\beta^{sk}Q_k)^m
 \longrightarrow\infty.
\]
Thus $D_2(\nu_r)\le a$ when $a<1$; when $a=1$ the same inequality follows from the definition.  This proves \eqref{eq:dimension-pressure}.  The equivalences in \eqref{eq:collision-criterion-body} follow from \eqref{eq:collision-rate}.
\end{proof}

\begin{corollary}\label{cor:finite-collision}
If $0<s<1$ and $\beta^{sn}Q_n>1$ for some $n\ge1$, then
\begin{equation}\label{eq:strict-finite-collision}
 D_2(\nu_r)
 \le -\frac{\log Q_n}{n\log\beta}
 <s,
 \qquad
 I_s(\nu_r)=\infty.
\end{equation}
If $1<t<\beta$ and $G(t)<\infty$, then
\begin{equation}\label{eq:lower-from-G}
 D_2(\nu_r)\ge\frac{\log t}{\log\beta},
\end{equation}
and $I_s(\nu_r)<\infty$ for every $0<s<\log t/\log\beta$.
\end{corollary}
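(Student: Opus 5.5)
Both parts follow from Theorem~\ref{thm:collision-pressure} and the shell decomposition used in its proof.

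For \eqref{eq:strict-finite-collision}, use the supremum formula \eqref{eq:collision-rate}: $\varrho\ge Q_n^{1/n}$. Then \eqref{eq:dimension-pressure} gives
\[
 D_2(\nu_r)=-\frac{\log\varrho}{\log\beta}\le-\frac{\log Q_n}{n\log\beta}.
\]
The hypothesis $\beta^{sn}Q_n>1$ is equivalent to $-\log Q_n<sn\log\beta$, which is the strict inequality $<s$. For $I_s(\nu_r)=\infty$, repeat the second half of the proof of Theorem~\ref{thm:collision-pressure} with $k=n$. Forcing $m$ consecutive blocks of length $n$ to collide is an event of probability $Q_n^m$. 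On this event $|X-X'|\le 2r^{mn}/(1-r)$, so
\[
 I_s(\nu_r)\ge\bigl((1-r)/2\bigr)^s(\beta^{sn}Q_n)^m\to\infty .
\]
This step uses only the given $n$ and not a comparison with $a$, so no case distinction is needed.

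For \eqref{eq:lower-from-G}, I would read off the radius of convergence. If $G(t)<\infty$ with $t>1$, then $t^nQ_n\to0$, so $Q_n^{1/n}\le t^{-1}(1+o(1))$ and hence $\varrho\le t^{-1}$. Inserting this into \eqref{eq:dimension-pressure} gives $D_2\ge\log t/\log\beta$.

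The finiteness of $I_s$ for $s<\log t/\log\beta$ is the point needing care, because the first half of the proof of Theorem~\ref{thm:collision-pressure} only gives $I_s<\infty$ for $s<a$. I would argue directly with the shells. Since $\beta^s<t$, the same shell decomposition and \eqref{eq:near-collisions} bound the tail of $I_s(\nu_r)$ by a constant times $\sum_n(1+n^2)\beta^{sn}Q_n$. Because $Q_n\le G(t)t^{-n}$, this is at most a constant times $\sum_n(1+n^2)(\beta^s/t)^n<\infty$.

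Alternatively, one can observe that $s<\log t/\log\beta\le a$ and cite the earlier argument. The direct estimate avoids the limit argument, and I expect no genuine obstacle. The only points to check are that $t<\beta$ keeps the range of $s$ inside $(0,1)$, and that the initial terms of the shell sum are finite, which holds because $\mathcal C\le 1$.
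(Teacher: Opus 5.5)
Your proposal is correct and essentially matches the paper's proof. The first part combines $\varrho\ge Q_n^{1/n}$ with \eqref{eq:dimension-pressure} and then runs the repeated-block argument with $k=n$; the second part substitutes $Q_n\le G(t)t^{-n}$ into the shell sum via \eqref{eq:near-collisions}. The one cosmetic difference is that you also obtain $D_2(\nu_r)\ge\log t/\log\beta$ from $\varrho\le t^{-1}$ and the pressure formula, whereas the paper gets it directly from the finiteness of $I_s(\nu_r)$ for every $s<\log t/\log\beta$ and the definition of $D_2$ as a supremum.
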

\begin{proof}
Since $\varrho\ge Q_n^{1/n}$, the first inequality in \eqref{eq:strict-finite-collision} follows from \eqref{eq:dimension-pressure}, and the assumed finite collision inequality makes it strict.  The repeated-block argument in the proof of Theorem~\ref{thm:collision-pressure} gives $I_s(\nu_r)=\infty$.

For the second assertion, positivity gives $Q_n\le G(t)t^{-n}$.  Substitution into \eqref{eq:near-collisions} makes the shell sum for $I_s(\nu_r)$ converge whenever $\beta^s<t$, giving \eqref{eq:lower-from-G}.
\end{proof}

When both parameters occur simultaneously, we write $Q_n^{(i)}$ and $G_i(t)$ for the collision probabilities and return series associated with $\beta_i$.

\section{A positive torus formula and packet amplification}\label{sec:torus}
Let $T_P$ be the automorphism of $\TT^4=\RR^4/\ZZ^4$ induced by
\begin{equation*}
 T_P(x_0,x_1,x_2,x_3)=(x_1,x_2,x_3,-x_0+Ax_1-Bx_2+Ax_3)\pmod{\ZZ^4}.
\end{equation*}
Its integer matrix $\mathsf T$ has determinant one. Write
\[
 \can(t)=t-\lfloor t+1/2\rfloor\in[-1/2,1/2),
\]
and let $y_j(x)$ be the canonical first coordinate of $T_P^jx$. We refer to $\ZZ+1/2$ as the centering boundary.

\begin{lemma}\label{lem:torus-formula}
With Haar probability measure on $\TT^4$,
\begin{equation}
 Q_n=\int_{\TT^4}\prod_{j=0}^{n-1}\cos^2(\pi y_j(x))\,dx.\label{eq:torus-formula}
\end{equation}
If a real bilateral sequence $y$ has $|y_j|<1/2$ and $P(\Shift)y\in\ZZ^{\ZZ}$, it is the canonical observation sequence of a unique point of $\TT^4$.
\end{lemma}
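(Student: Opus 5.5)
The plan is to expand each factor as $\cos^2(\pi y)=\tfrac12+\tfrac14(e^{2\pi i y}+e^{-2\pi i y})$. Since $e^{2\pi i t}$ depends only on $t$ modulo $\ZZ$, the canonical representative $\can$ plays no role inside the exponentials, and we may replace $y_j(x)$ by the first coordinate of $\mathsf T^jx$, namely $\langle e_0,\mathsf T^jx\rangle$. Expanding the product over $j=0,\dots,n-1$ gives
\[
 \prod_{j=0}^{n-1}\cos^2(\pi y_j(x))
 =\sum_{u\in\{-1,0,1\}^n}\Bigl(\prod_j w(u_j)\Bigr)
 \exp\Bigl(2\pi i\Bigl\langle \sum_j u_j(\mathsf T^{\top})^je_0,\,x\Bigr\rangle\Bigr),
\]
with $w(0)=\tfrac12$ and $w(\pm1)=\tfrac14$; these are exactly the laws of the $U_j$ in \eqref{eq:lazy-return}. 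Integrating against Haar measure keeps exactly those terms with $\sum_j u_j(\mathsf T^\top)^je_0=0$ in $\ZZ^4$. So the integral equals $\PP\bigl(\sum_j U_j(\mathsf T^\top)^je_0=0\bigr)$.

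The second step identifies this lattice condition with $\sum_jU_j\beta^j=0$. The matrix $\mathsf T$ is the companion matrix of $P$, since $P$ is reciprocal and the sign pattern matches. Hence the vectors $(\mathsf T^\top)^je_0$ correspond to $X^j$ in $\ZZ[X]/(P)\cong\ZZ^4$. Concretely, under the cyclic-vector isomorphism sending $e_0$ to $1$ and $\mathsf T^\top$ to multiplication by $X$ (equivalently, the vector of the first row of $\mathsf T^j$), the sum vanishes iff $P\mid\sum_ju_jX^j$. Because $P$ is irreducible (Proposition~\ref{prop:quartic-class}), this happens iff $\sum_ju_j\beta^j=0$. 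Combined with \eqref{eq:lazy-return}, which the paper takes as given from the digit-reversal identity $X_n-X_n'=2\sum(\ldots)$ with $U_j=(\varepsilon_j-\varepsilon_j')/2$ reversed, this yields \eqref{eq:torus-formula}.

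I expect the only genuine check to be the claim that $e_0,\mathsf T^\top e_0,\dots,(\mathsf T^\top)^3e_0$ is a $\ZZ$-basis and that $\mathsf T^\top$ acts as multiplication by $X$. The cleanest way to see this is that the first coordinates $y_0,\dots,y_3$ of $x,\mathsf Tx,\mathsf T^2x,\mathsf T^3x$ are just $x_0,\dots,x_3$, because $\mathsf T$ shifts coordinates. The relation $y_{j+4}=-y_j+Ay_{j+1}-By_{j+2}+Ay_{j+3}$ modulo $\ZZ$ then says precisely that the functionals $x\mapsto y_j$ satisfy the recurrence $P(\Shift)$. Therefore $\sum u_j y_j\equiv\langle c,x\rangle$, where $c$ is the remainder of $\sum u_jX^j$ modulo $P$ in the basis $1,X,X^2,X^3$. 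This remainder is zero iff $P$ divides $\sum u_jX^j$.

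For the second statement, suppose $|y_j|<1/2$ and $P(\Shift)y\in\ZZ^\ZZ$. Take $x=(y_0,y_1,y_2,y_3)\bmod\ZZ^4$. Then the first coordinate of $\mathsf T^jx$ is congruent to $y_j$ modulo $\ZZ$ for all $j\in\ZZ$. This follows by induction forward using the integrality of $P(\Shift)y$, and backward using $\det\mathsf T=1$, the recurrence solved for $y_j$ having leading coefficient $1$. Since $|y_j|<1/2$ and $\can$ is the unique representative in $[-1/2,1/2)$, we get $y_j(x)=y_j$ exactly. Uniqueness holds because $x$ is determined modulo $\ZZ^4$ by its observations $y_0,\dots,y_3$ modulo $\ZZ$.
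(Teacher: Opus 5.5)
Your proposal is correct and follows the paper's proof essentially step for step. The paper likewise uses that $v_j=(\mathsf T^j)^{\mathsf T}e_0$ gives the standard basis for $j\le 3$ and obeys the $P$-recurrence, so division by the monic irreducible $P$ yields $\sum c_jv_j=0\iff P\mid F\iff F(\beta)=0$; it expands $\cos^2$ so that Haar integration keeps exactly the zero-frequency collision words with weights $1/4,1/2,1/4$; and it proves the second statement by iterating the recurrence forward and backward modulo $\ZZ$ and then using uniqueness of the canonical representative.
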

\begin{proof}
Let $e_0$ be the first standard basis vector and $v_j=(\mathsf T^j)^{\mathsf T}e_0$. The vectors $v_0,v_1,v_2,v_3$ form the standard basis and satisfy the recurrence for $P$.
Division by the monic polynomial $P$ and its irreducibility give,
for every integer $m\ge 0$ and every polynomial
$F(X)=\sum_{j=0}^{m}c_jX^j\in\mathbb{Z}[X]$,
\[
 \sum_{j=0}^{m}c_jv_j=0
 \quad\Longleftrightarrow\quad
 P\mid F
 \quad\Longleftrightarrow\quad
 F(\beta)=0.
\]
Expand $\cos^2(\pi t)=(e^{2\pi it}+2+e^{-2\pi it})/4$. Haar integration retains exactly the zero-frequency terms, which are the collision words with their weights $1/4,1/2,1/4$. This proves \eqref{eq:torus-formula}.

For the last statement take $(y_0,y_1,y_2,y_3)\pmod{\ZZ^4}$. Integral forcing makes \eqref{eq:recurrence} homogeneous modulo integers. Forward and backward iteration recover all coordinates modulo integers; the canonical range determines their representatives uniquely.
\end{proof}
For $|t|<1/2$, put
\begin{equation*}
 V(t)=-\log\cos^2(\pi t),\quad V'(t)=2\pi\tan(\pi t),\quad
 V''(t)=2\pi^2\sec^2(\pi t).
\end{equation*}
Thus $V$ is nonnegative, even, and increasing with $|t|$. For $0<\rho\le1/2$,
\begin{equation}\label{eq:potential-away-wall}
 |V'(t)|\le 2\pi\cot(\pi\rho),\qquad
 V''(t)\le 2\pi^2\csc^2(\pi\rho)
 \quad \bigl(|t|\le 1/2-\rho\bigr).
\end{equation}
For $|t|\le h<1/2$,
\begin{equation}
 V(t)\le C_ht^2,\qquad |V'(t)|\le2C_h|t|,\qquad C_h=\pi^2\sec^2(\pi h).
 \label{eq:potential-tail}
\end{equation}
Equation~\eqref{eq:torus-formula} is equivalently $Q_n=\int_{\TT^4}\exp(-\sum_{j=0}^{n-1}V(y_j(x)))\,dx$.

The configurations used below have a finite interval on which the integer forcing is supported, geometrically decaying real tails, and one outgoing oscillatory tail coming from the unit-circle conjugates.  We call such a configuration a packet.  The localized sequence removes only the oscillatory tail; its action measures the analytic cost that will later be compared with the number of admissible placements.

\begin{definition}\label{def:packet}
A packet with parameters $0<\eta<1/4$ and $h>0$ consists of bilateral real sequences $Y,H$, an integer $\ell\ge1$, and $\gamma\in\CC\setminus\{0\}$ such that
\begin{align*}
 &\kappa=P(\Shift)Y\in\ZZ^{\ZZ}\setminus\{0\},\qquad\supp\kappa\subset[0,\ell-1],\\
 &Y_j=H_j+\mathbf 1_{\{j\ge \ell\}}2\Re(\gamma\zeta^{j-\ell}),\qquad |H_j|\le\frac12-2\eta,\\
 &|H_j|\le hr^{\dist(j,[0,\ell-1])}\quad(j\notin[0,\ell-1]).
\end{align*}
Intervals in support statements are intersected with $\ZZ$. The interval $[0,\ell-1]$ is the core. Define
\begin{equation*}
 \Delta=2|\gamma|,\qquad E=\sum_{j\in\ZZ}V(H_j),\qquad
 \mathcal V=\sum_{j\in\ZZ}|V'(H_j)|.
\end{equation*}
Both sums are finite by \eqref{eq:potential-tail} and the geometric tails.
\end{definition}

\begin{lemma}\label{lem:phase-steering}
If $|z|\le8$, every block of eleven consecutive integers contains an integer $m$ such that $|\zeta^m z+1|\le8$.
\end{lemma}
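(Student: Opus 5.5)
The proof is a geometric argument on the circle together with a pigeonhole statement for the rotation by $\zeta=e^{i\varphi}$. Write $z=\rho e^{i\theta}$ with $0\le\rho\le8$. Then
\[
 |\zeta^m z+1|^2=\rho^2+2\rho\cos(\theta+m\varphi)+1,
\]
so $|\zeta^m z+1|\le 8$ holds if and only if $\rho^2+2\rho\cos(\theta+m\varphi)\le 63$. If $\rho\le 7$, this holds for every $m$, because $\rho^2+2\rho\le 63$. So we may assume $7<\rho\le 8$. In that range it suffices to have $\cos(\theta+m\varphi)\le c$ with $c=(63-64)/16=-1/16$, since $\rho^2+2\rho c\le 63$ is then implied by $\rho\le 8$. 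A slightly cleaner sufficient condition is $\cos(\theta+m\varphi)\le 0$, which gives $\rho^2\le 64$; this holds only up to the boundary, so it is better to use $\rho^2+2\rho\cos\le 63$ directly. The key requirement is therefore that the phase $\theta+m\varphi$ lies in the arc where $\cos\le -1/16$. That arc is centred at $\pi$ and has half-width $\arccos(1/16)\approx 1.508$, so its total length is about $3.02$, slightly less than $\pi$.

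The task reduces to showing that for every $\theta$, the eleven points $\theta+m\varphi$, $m=m_0,\dots,m_0+10$, cannot all avoid a fixed arc of length about $3.02$. Equivalently, the orbit $\{m\varphi\}_{m=0}^{10}$ must meet every arc of that length, i.e.\ its largest circular gap must be less than about $3.02$. For this I will use the explicit values from Proposition~\ref{prop:quartic-class}: $\cos\varphi=v/2$ with $v_1=(1-\sqrt{13})/2\approx-1.303$ and $v_2=1-\sqrt2\approx-0.414$. These give $\varphi_1\approx 2.280$ and $\varphi_2\approx 1.780$.

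For each parameter I will check that the points $0,\varphi,2\varphi,\dots,10\varphi \pmod{2\pi}$ have maximal gap well below $3.02$. In fact a few multiples should already suffice. For $\varphi_2\approx1.78$, the points $0,1.78,3.56,5.34$ already have maximal gap about $1.78+$, and the final gap from $5.34$ to $2\pi$ is about $0.94$. For $\varphi_1\approx2.28$, the points $0,2.28,4.56,0.557,2.84,\dots$ give gaps below about $1.7$ after four or five terms. Translation invariance then handles arbitrary $\theta$ and arbitrary starting integer $m_0$, since the block $\{m_0,\dots,m_0+10\}$ yields the same point configuration rotated by $m_0\varphi$.

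The only delicate step is to make the gap computation rigorous rather than numerical. To do this, I will bound $\varphi_i$ in explicit rational intervals, using monotonicity of $\arccos$ and the bounds $\sqrt{13}\in(3.605,3.606)$ and $\sqrt2\in(1.414,1.415)$. I will then propagate the intervals to $k\varphi_i \bmod 2\pi$ for $k\le 10$. Because the margin is generous (gaps around $1.8$ against a threshold of about $3$), even crude interval bounds close the argument. The generous count of eleven is what makes a simple, robust verification possible.
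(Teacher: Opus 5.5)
Your argument is correct, but it takes a different route from the paper.

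The paper never locates the angle of $\zeta$. It puts $f_m=\Re(\theta\zeta^m)$ and uses only the inequalities $|1-\zeta|>3/2$ and $|1-\zeta^2|>3/2$ from Proposition~\ref{prop:quartic-class}. Geometric sums over any eleven consecutive $m$ then give $|\frac1{11}\sum f_m|<\frac4{33}$ and $\frac1{11}\sum f_m^2>\frac38$. If $-\frac18\le f_m\le1$ held for all $m$, then $f_m^2\le\frac78f_m+\frac18$, and averaging gives a contradiction; so some $f_m<-\frac18$. That proof is purely algebraic and handles both parameters at once. It also explains the number eleven: it is what makes the geometric-sum errors small enough.

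Your route is also sound. You reduce to $\cos(\theta+m\varphi)\le-\frac1{16}$, i.e. to hitting a closed arc of length $2\arccos(\frac1{16})$, and then to a maximal-gap bound for $\{k\varphi\}_{0\le k\le10}$. This proves more: four consecutive integers already suffice, and three suffice for $\varphi_1$.

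The cost is a rigorous enclosure of transcendental angles. You can avoid decimal bounds on $\arccos$ and $\pi$ entirely:
\begin{itemize}
\item Comparing $\cos\varphi_i=v_i/2$ with $\cos(2\pi/3)$, $\cos(3\pi/4)$, $\cos(\pi/2)$ and $\cos(7\pi/12)=-(\sqrt6-\sqrt2)/4$ gives $\varphi_1\in(2\pi/3,3\pi/4)$ and $\varphi_2\in(\pi/2,7\pi/12)$.
\item Since $\sin(\pi/12)>\frac1{16}$, you get $2\arccos(\frac1{16})>5\pi/6$.
\item The circular gaps of $\{0,\varphi_1,2\varphi_1\}$ are then all below $3\pi/4$, and those of $\{0,\varphi_2,2\varphi_2,3\varphi_2\}$ are all below $7\pi/12$. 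Both are less than $5\pi/6$.
\end{itemize}

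One small slip: your aside calling $\cos(\theta+m\varphi)\le0$ a sufficient condition is false. At $\rho=8$ and $\cos(\theta+m\varphi)=0$ one gets $|\zeta^mz+1|^2=65$. You discard that condition anyway, so the argument is unaffected.
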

\begin{proof}
The case $z=0$ is immediate. Otherwise write $z=t\theta$, with $0<t\le8$, $|\theta|=1$, and put $f_m =\Re(\theta\zeta^m )$. 
Let $m_0\in\mathbb{Z}$ be arbitrary. Geometric sums and
\eqref{eq:unit-circle-bounds} give
\[
 \left|\frac1{11}\sum_{m=m_0}^{m_0+10} f_m \right|
 <\frac4{33}<\frac18,
 \qquad
 \frac1{11}\sum_{m=m_0}^{m_0+10} f_m ^2
 >\frac12-\frac2{33}>\frac38.
\]
If every $f_m \ge-1/8$, then $(f_m +1/8)(f_m -1)\le0$, so $f_m ^2\le(7/8)f_m +1/8$. Averaging would give an upper bound $15/64<3/8$, a contradiction. Thus some $f_m <-1/8$, and for it
\[
 |\zeta^m z+1|^2=t^2+1+2tf_m \le t^2+1-t/4\le63<64.
\]
The last quadratic is convex on $[0,8]$ and has endpoint values $1$ and $63$.
\end{proof}

For a packet and an integer $N>2\ell+4$, set
\begin{align}
 b_N&=\lfloor N/11\rfloor,\qquad d_N=\lfloor(N-\ell)/2\rfloor,\label{eq:placement-blocks}\\
 e_N&=\frac{2hr^{d_N}}{1-r^N},\qquad u_N=8\Delta+e_N,\label{eq:perturbation-budget}\\
 C_\eta&=2\pi^2\csc^2(\pi\eta),\qquad C_N=E+u_N\mathcal V+C_\eta Nu_N^2,\label{eq:amplifier-cost}\\
 \Theta_N&=\log b_N-C_N.\label{eq:amplifier-gain}
\end{align}
The last expression is used only when $b_N\ge1$.

\begin{proposition}\label{prop:packet-amplification}
Suppose $b_N\ge1$ and $u_N<\eta$. There is $c(P,\eta)>0$ such that, for every integer $M\ge2$, some $5\le n<2NM$ satisfies
\begin{equation}
 \begin{aligned}
 \log(\beta^nQ_n)>{}&M\Theta_N-\log b_N-5\log M-5\log N\\
 &-4\log2+\log c(P,\eta).
 \end{aligned}\label{eq:packet-amplification}
\end{equation}
The constant does not depend on $M,N$, or the packet. If $\Theta_N>0$, then $D_2(\nu_r)<1$. If $\Theta_N>1$, then
\begin{equation}
 I_{1-1/(8N)}(\nu_r)=\infty,\qquad D_2(\nu_r)\le1-\frac1{8N}.
 \label{eq:amplifier-energy}
\end{equation}
\end{proposition}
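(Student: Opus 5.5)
We place $M$ translated copies of the packet along $\ZZ$ and add them. Each resulting configuration is then a point of $\TT^4$ by Lemma~\ref{lem:torus-formula}. We bound the torus integrand below on a small box around each such point. Finally we sum over many disjoint boxes.

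\textbf{Placement.} Choose offsets $0=s_1<s_2<\dots<s_M$ with gaps $s_{i+1}-s_i\in[N,2N-1]$, and set $Z=\sum_i \Shift^{-s_i}Y$. Then $P(\Shift)Z=\sum_i\Shift^{-s_i}\kappa$ is integral and nonzero. The outgoing part of $Z$ on the region beyond copy $i$ is $2\Re(w_i\zeta^{j-s_i-\ell})$, where $w_{i+1}=\gamma+\zeta^{-(s_{i+1}-s_i)}w_i$.

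Normalize $z_i=w_i/\gamma$. Lemma~\ref{lem:phase-steering} says that whenever $|z_i|\le 8$, every block of eleven consecutive gaps contains one that keeps $|z_{i+1}|\le8$. So at each step at least $b_N$ admissible gaps in $[N,2N-1]$ keep the oscillation amplitude at most $8\Delta$. (We take the gap block $[N,2N-1]$ split into $b_N$ blocks of eleven.)

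The resulting $\ge b_N^{M-1}$ configurations are pairwise distinct. They are separated as points of $\TT^4$, since their integer forcings differ. We then truncate to the finite window $[0,n)$ with $n=s_M+\ell+N<2NM$, so that $n\ge5$.

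\textbf{Perturbation estimate.} On the window, write $Z_j=H^{(i)}_j+\epsilon_j$, where $H^{(i)}$ is the localized sequence of the nearest copy. The error $\epsilon_j$ collects two things:
\begin{itemize}
\item the residual oscillation, bounded by $8\Delta$;
\item the geometric tails of all other copies, bounded by $\sum_k hr^{d_N+kN}\le e_N/2$ on appropriate sides.
\end{itemize}
Hence $|\epsilon_j|\le u_N<\eta$.

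Using $|H_j|\le 1/2-2\eta$, the point stays at distance at least $\eta$ from the wall. A second-order Taylor expansion of $V$ with \eqref{eq:potential-away-wall} then bounds the window action per copy by $E+u_N\mathcal V+C_\eta N u_N^2$. Here the quadratic term is counted over the $\le 2N$ sites attributed to each copy. Summing gives a total action of at most $MC_N$ plus boundary corrections.

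\textbf{Lower bound on $Q_n$.} Around each configuration, take the set of $x\in\TT^4$ whose observation sequence stays within an extra $\eta/2$ of it on $[0,n)$. This set contains a box of side of order $c\,\beta^{-n}n^{-O(1)}$ in the expanding/neutral/contracting coordinates. The neutral directions drift only linearly in $n$, and the expanding direction must be controlled at the far end, costing roughly $\beta^{-n}$. The boxes for distinct configurations are disjoint because their observation sequences are separated by at least $\eta$ somewhere.

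On each box the integrand is at least $\exp(-MC_N-O(n\eta^2 \cdot\text{small}))$. In practice we absorb the extra $\eta/2$ slack into $\eta$ via the choice of constants in $C_\eta$.

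Summing over the boxes gives
\[
\beta^nQ_n\ge c\,b_N^{M-1}e^{-MC_N}\,n^{-O(1)}.
\]
The polynomial losses, namely box volume, $n\le 2NM$, and the choice of $n$, produce the terms $-5\log M-5\log N-4\log2$. Together with $-\log b_N$ from the exponent $M-1$, this yields \eqref{eq:packet-amplification}.

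\textbf{Consequences.} If $\Theta_N>0$, letting $M\to\infty$ gives $\beta^nQ_n>1$, hence $D_2<1$ by \eqref{eq:collision-criterion-body}. If $\Theta_N>1$, then $\log(\beta^nQ_n)\ge M-O(\log M)$. Since $n<2NM$, we get $\beta^{(1-1/(8N))n}Q_n\ge\beta^nQ_n\,\beta^{-M/4}>1$ for large $M$, using $\log\beta<\log 2<4$... more precisely $n\log\beta/(8N)<M\log2/4<M/2$. Corollary~\ref{cor:finite-collision} then gives \eqref{eq:amplifier-energy}.

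\textbf{Main obstacle.} The hardest step is the volume/disjointness estimate. It requires showing the box has volume at least $c\,\beta^{-n}$ times a polynomial factor uniformly in the packet, with the unit-circle directions handled despite having no contraction. We would handle this using linear growth of perturbations in the neutral plane, with a box side of order $1/n$ there.
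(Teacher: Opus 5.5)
Your architecture matches the paper's: phase-steered gaps giving $b_N^{M-1}$ lists, the nearest-core deviation bound $u_N$, the Taylor bound $MC_N$, and a sum over disjoint torus boxes. Your consequences paragraph is also fine; for $\Theta_N>0$, appealing directly to \eqref{eq:collision-criterion-body} is simpler than the paper's route. Two steps, however, do not go through as written.

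\textbf{The window length.} Your $n=s_M+\ell+N$ depends on the list through $s_M$. So the boxes you sum do not all contribute to one $Q_n$, and $\beta^nQ_n\ge c\,b_N^{M-1}e^{-MC_N}n^{-O(1)}$ is not a statement about any fixed $n$. The paper pigeonholes: at most $(M-1)N+1\le MN$ values of $n$ occur, so some single $n$ carries at least $b_N^{M-1}/(MN)$ distinct forcing words. The exact constants $-\log b_N-5\log M-5\log N-4\log2$ come from three sources: this factor $1/(MN)$, the factor $n^{-4}$ from the box volume, and $n<2NM$. With $n^{-O(1)}$ you have not derived them. A single common window for all lists would also work, since the quadratic error is bounded globally by $\tfrac{C_\eta}{2}nu_N^2\le MC_\eta Nu_N^2$.

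\textbf{The box and its action.} The set of $x$ whose observations stay within $\eta/2$ of a configuration is the wrong object. The slack cannot be absorbed into $C_\eta$, because $C_N$ is fixed by the statement. A deviation of size $\delta$ that persists over all $n$ sites (a neutral-mode perturbation does exactly this) adds about $\pi^2\delta^2$ per gap site. For $\delta\sim\eta$ that totals order $NM\eta^2$, which swamps $M\Theta_N\le M\log N$.

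Your linear-drift heuristic for the neutral plane is also wrong. The term $a_c\cos(i\varphi)+a_s\sin(i\varphi)$ stays bounded by $|a_c|+|a_s|$ for all $i$. The factor $1/n$ is needed only because the action accumulates first-order changes over $n$ sites. If neutral perturbations really grew linearly, your box would reach deviation $\eta/2$ by time $n$ and incur exactly the fatal loss above.

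\textbf{The missing step.} Perturb by an exact homogeneous solution $z_i=a_\beta\beta^i+a_rr^i+a_c\cos(i\varphi)+a_s\sin(i\varphi)$, with $|a_\beta|\le\rho\beta^{-(n-1)}/n$, $|a_r|,|a_c|,|a_s|\le\rho/n$, and $0<\rho\le\eta/64$. Then:
\begin{itemize}
\item $|z_i|\le4\rho/n$ on $[0,n)$, and the observations stay canonical.
\item Since $|V'|\le2\pi\cot(\pi\eta/2)\le4/\eta$ there, the action rises by at most $16\rho/\eta<1$. This loss goes into $c(P,\eta)$, uniformly in $M$, $N$ and the packet.
\item Because $P(\Shift)z=0$, the forcing word is constant on each box, so boxes for distinct words are disjoint. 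Your separation-by-$\eta$ claim is unjustified, since integrality only gives $\max_i|y_i-y_i'|\ge1/7$, but it is not needed.
\item Let $R_P$ be the matrix taking $(a_\beta,a_r,a_c,a_s)$ to $(z_0,\dots,z_3)$, invertible by Lemma~\ref{lem:homogeneous-modes}. The box volume is $|\det R_P|(2\rho)^4\beta^{-(n-1)}n^{-4}$.
\end{itemize}
Together these give $\beta^nQ_n\ge\beta|\det R_P|(2\rho)^4e^{-1}\,n^{-4}\mathcal N e^{-MC_N}$, where $\mathcal N$ is the number of distinct words at the chosen length.
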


\begin{proof}
Choose
$
0=\tau_0<\tau_1<\cdots<\tau_{M-1},\  
\tau_j-\tau_{j-1}\in[N,2N-1].
$
For the normalized neutral state
$
\psi_0=1,\ 
\psi_j=\zeta^{\tau_j-\tau_{j-1}}\psi_{j-1}+1,
$
partition the allowed increments into $b_N$ disjoint blocks of eleven
integers and a remainder. By Lemma~\ref{lem:phase-steering}, at each step
there are at least $b_N$ choices preserving $|\psi_j|\le8$; hence there
are at least $b_N^{M-1}$ admissible lists.

For such a list set
$
y_i=\sum_{j=0}^{M-1}Y_{i-\tau_j}.
$
At any site the emitted waves come from a prefix of the list, and if its
last index is $j$, their sum is
$
2\Re(\gamma\zeta^{i-\tau_j-\ell}\psi_j),
$
so its absolute value is at most $8\Delta$; before the first emission it
is zero.

Assign each integer site to a nearest translated core. Its distance from
every other core is at least $d_N$, and the distances to further cores on
either side increase by at least $N$. Thus the nonassigned localized tails
contribute at most
$
2h\sum_{m\ge0}r^{d_N+mN}=e_N.
$
Hence, at a site assigned to the $j$th core,
\begin{equation}
 |y_i-H_{i-\tau_j}|\le u_N,\qquad
 |y_i|<\frac12-\eta.
 \label{eq:localized-perturbation}
\end{equation}
Since the forcing is integral, Lemma~\ref{lem:torus-formula} realizes the
sequence as a canonical torus orbit. Put
$
n=\tau_{M-1}+\ell+4,
$
so $5\le n<2NM$. Taylor's theorem and \eqref{eq:localized-perturbation}
give
\[
V(y_i)\le V(H_{i-\tau_j})
 +u_N|V'(H_{i-\tau_j})|
 +\frac{C_\eta}{2}u_N^2.
\]
The sites assigned to a fixed core form a subset of $\ZZ$, so the
corresponding base action and derivative sums are bounded by
$E$ and $\mathcal V$. Therefore
\begin{equation}
 \sum_{i=0}^{n-1}V(y_i)
 \le M(E+u_N\mathcal V)+\frac{C_\eta n}{2}u_N^2
 \le MC_N.
 \label{eq:placed-action}
\end{equation}
The forcing word determines the placement data. Indeed, writing
$F=\supp\kappa\subset[0,\ell-1]$, gaps inside a translated copy of $F$
are at most $\ell-1$, whereas gaps between distinct copies are at least
$
N-(\ell-1)>\ell.
$
Thus cutting at gaps larger than $\ell$ recovers the clusters, and
subtracting $\min F$ from each first position recovers all starts.
The supports are disjoint, so no cancellation occurs, and the observed
forcing on $0,\ldots,n-5$ contains every copy. Since at most
$(M-1)N+1\le MN$ lengths occur, for some length there are at least
\begin{equation}
 \mathcal N\ge\frac{b_N^{M-1}}{MN}
 \label{eq:decoded-count}
\end{equation}
distinct observed forcing words.

Write $\zeta=e^{i\varphi}$ and let
\begin{equation}
 R_P=\begin{pmatrix}
 1&1&1&0\\
 \beta&r&\cos\varphi&\sin\varphi\\
 \beta^2&r^2&\cos(2\varphi)&\sin(2\varphi)\\
 \beta^3&r^3&\cos(3\varphi)&\sin(3\varphi)
 \end{pmatrix}.
 \label{eq:mode-matrix}
\end{equation}
By Lemma~\ref{lem:homogeneous-modes}, $R_P$ is invertible. Fix
$0<\rho_{\rm box}\le\eta/64$. Around each of the $\mathcal N$ orbit
centers add
$
z_i=a_\beta\beta^i+a_rr^i+a_c\cos(i\varphi)+a_s\sin(i\varphi),
$
with
\[
|a_\beta|\le\frac{\rho_{\rm box}\beta^{-(n-1)}}{n},
\qquad
|a_r|,|a_c|,|a_s|\le\frac{\rho_{\rm box}}{n}.
\]
Then $|z_i|\le4\rho_{\rm box}/n$ for $0\le i<n$, so the perturbed
observations remain canonical and at distance at least $\eta/2$ from the
centering boundary. Since
\[
|V'(t)|\le2\pi\cot(\pi\eta/2)\le\frac4\eta
\qquad
\left(|t|\le\frac12-\frac\eta2\right),
\]
the action increases by at most
$
\frac{16\rho_{\rm box}}{\eta}<1.
$
The corresponding initial-coordinate box has volume
$
|\det R_P|(2\rho_{\rm box})^4\beta^{-(n-1)}n^{-4}.
$
Its projection to $\TT^4$ is injective because coordinate differences are
bounded by $8\rho_{\rm box}/n<1$. Moreover, homogeneous perturbations do
not change the forcing word while the observations remain canonical;
hence boxes corresponding to distinct observed words are disjoint,
including across fundamental-cube boundaries. Integrating
\eqref{eq:torus-formula} over these boxes and using
\eqref{eq:placed-action} yields
\begin{equation}
 \beta^nQ_n
 \ge c(P,\eta)n^{-4}\mathcal N e^{-MC_N},
 \qquad
 c(P,\eta)
 =\beta|\det R_P|(2\rho_{\rm box})^4e^{-1}>0.
 \label{eq:torus-box-lower}
\end{equation}
Combining \eqref{eq:decoded-count} with $n<2NM$ gives
\eqref{eq:packet-amplification}; in particular $c(P,\eta)$ is independent
of $M,N$, and the packet.

If $\Theta_N>1$, let $s=1-1/(8N)$. Since $\log\beta<1$ and $n<2NM$,
\[
(1-s)n\log\beta<\frac M4.
\]
Thus, for all sufficiently large $M$, \eqref{eq:packet-amplification}
implies $\log(\beta^{sn}Q_n)>0$. Corollary~\ref{cor:finite-collision}
therefore gives
$
I_{1-1/(8N)}(\nu_r)=\infty.
$
Thus, 
$
D_2(\nu_r)\le1-\frac1{8N}.
$
Finally, if merely $\Theta_N>0$, choose
$
0<a<\min\left\{1,\frac{\Theta_N}{4N\log\beta}\right\},
$ and $s=1-a.
$
Then
$
(1-s)n\log\beta<\frac{M\Theta_N}{2},
$
so after subtracting this penalty the right-hand side of
\eqref{eq:packet-amplification} is positive for all sufficiently large
$M$. Another application of Corollary~\ref{cor:finite-collision} yields
$D_2(\nu_r)<1$.
\end{proof}

\begin{corollary}\label{cor:packet-criterion}
Fix either quartic parameter and constants $0<\eta<1/4$ and $h>0$. Suppose there is a sequence of packets in the sense of Definition~\ref{def:packet}, with these same constants, core lengths $\ell_k$, actions $E_k$, and neutral amplitudes $\Delta_k$, such that
\[
 \Delta_k\longrightarrow0,
 \qquad \Delta_k\ell_k\longrightarrow0,
 \qquad E_k=o\bigl(\log(1/\Delta_k)\bigr).
\]
Then $D_2(\nu_{\beta^{-1}})<1$.
\end{corollary}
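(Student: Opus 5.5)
The plan is to apply Proposition~\ref{prop:packet-amplification} to the $k$th packet with a well-chosen placement scale $N=N_k$, and to show that $\Theta_{N_k}>0$ for all large $k$. Positivity of $\Theta_N$ for a single packet already gives $D_2(\nu_r)<1$. Since $\eta$ and $h$ are fixed, the constants $C_\eta$ and $r$ are uniform in $k$. The only packet-dependent quantities entering $\Theta_N$ are therefore $E_k$, $\Delta_k$, $\ell_k$ and $\mathcal V_k$.

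First I would bound $\mathcal V_k$ by $E_k$ and $\ell_k$. On the core, $|H_j|\le\frac12-2\eta$, so \eqref{eq:potential-away-wall} gives $|V'(H_j)|\le 2\pi\cot(2\pi\eta)$. The core therefore contributes $O(\ell_k)$. Off the core, $|H_j|\le hr^{\dist}$, which is eventually below any fixed $h'<1/2$. Using \eqref{eq:potential-tail} on those sites, and the core-type bound on the finitely many intermediate sites, the tails contribute $O(1)$, uniformly in $k$. Hence $\mathcal V_k\le C(1+\ell_k)$.

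Next I would choose $N_k=\lfloor \Delta_k^{-1}\rfloor$. Then $N_k\to\infty$ and $N_k>2\ell_k+4$ for large $k$, because $\Delta_k\ell_k\to0$. It follows that $d_{N_k}\ge N_k/3$ for large $k$, so $e_{N_k}=O(r^{N_k/3})=o(\Delta_k)$. Consequently $u_{N_k}\le 9\Delta_k$ eventually, and in particular $u_{N_k}<\eta$.

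The cost then satisfies
\[
C_{N_k}\le E_k+9\Delta_k C(1+\ell_k)+81C_\eta N_k\Delta_k^2
= E_k+o(1)+O(\Delta_k)=E_k+o(1).
\]
For the gain, $b_{N_k}=\lfloor N_k/11\rfloor$, so $\log b_{N_k}=\log(1/\Delta_k)-O(1)$. Therefore
\[
\Theta_{N_k}\ge \log(1/\Delta_k)-E_k-O(1)
=\log(1/\Delta_k)\,(1-o(1))-O(1)\to\infty,
\]
using $E_k=o(\log(1/\Delta_k))$. Thus $\Theta_{N_k}>0$ for some large $k$, and Proposition~\ref{prop:packet-amplification} gives $D_2(\nu_{\beta^{-1}})<1$.

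The only delicate point is the bound on $\mathcal V_k$, which must be uniform in $k$ apart from its linear dependence on $\ell_k$. This is exactly where the hypothesis $\Delta_k\ell_k\to0$ is used: it kills the term $u_N\mathcal V$. The tail estimate needs only that the off-core terms are summable geometrically with constants depending on $h,r,\eta$ alone, which follows from the uniform tail hypothesis in Definition~\ref{def:packet}.
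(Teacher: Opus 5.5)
Your proposal is correct and follows the paper's proof essentially step for step. It uses the same bound $\mathcal V_k\le C(\ell_k+1)$ from the boundary margin and geometric tails, the choice $N_k\approx\Delta_k^{-1}$ (you take the floor where the paper takes the ceiling, which is immaterial), the resulting estimates $u_{N_k}=O(\Delta_k)$, $u_{N_k}\mathcal V_k\to0$, $N_ku_{N_k}^2\to0$, and $\Theta_{N_k}\ge\log(1/\Delta_k)-E_k-O(1)\to\infty$, before invoking Proposition~\ref{prop:packet-amplification} exactly as the paper does.
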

\begin{proof}
The common boundary margin and the geometric-tail estimate imply
\[
 \mathcal V_k=\sum_{j \in \mathbb{Z}}|V'(H_j^{(k)})|\le C(\ell_k+1)
\]
with $C$ independent of $k$. Indeed, \eqref{eq:potential-away-wall} bounds the derivative on the core. On each tail, there are only a fixed number of sites before $hr^j\le1/4$, and \eqref{eq:potential-tail} bounds the remaining derivative sum by a geometric series.

Set $N_k=\lceil\Delta_k^{-1}\rceil$. Then $\ell_k/N_k\to0$, so $N_k>2\ell_k+4$ eventually. With the notation \eqref{eq:placement-blocks}--\eqref{eq:amplifier-gain}, one has $d_{N_k}\ge N_k/4-1$ for all sufficiently large $k$, and
\[
 \frac{e_{N_k}}{\Delta_k}
 \le\frac{2hN_kr^{N_k/4-1}}{1-r^{N_k}}
 \longrightarrow0.
\]
Consequently $u_{N_k}=O(\Delta_k)$ and
\[
 u_{N_k}\mathcal V_k\longrightarrow0,
 \qquad N_ku_{N_k}^2\longrightarrow0.
\]
Since $b_{N_k}\ge N_k/22$ eventually,
\[
 \Theta_{N_k}
 \ge\log(1/\Delta_k)-E_k-\log22-o(1)
 \longrightarrow+\infty.
\]
In particular, $u_{N_k}<\eta$ and $\Theta_{N_k}>0$ for some $k$. Proposition~\ref{prop:packet-amplification} gives the conclusion.
\end{proof}

\section{Reciprocal packets and an algebraic seed}\label{sec:reciprocal}
For every root $\lambda$ of $P$, put $C_\lambda=P'(\lambda)^{-1}$. Reciprocity gives
\begin{equation}
 P'(\lambda^{-1})=-\lambda^{-2}P'(\lambda),\qquad C_r=-\beta^2C_\beta.
 \label{eq:reciprocity-residues}
\end{equation}
Let $K\in\ZZ[X,X^{-1}]$ be nonzero and satisfy
\begin{equation}
 K(X^{-1})=\sigma X^{-2c_K}K(X),\qquad\sigma\in\{-1,1\},\quad c_K\in\ZZ,
 \label{eq:reciprocal-polynomial}
\end{equation}
with $K(\zeta)\ne0$. Define
\begin{equation}
 \begin{gathered}
 j_*=4-2c_K,\qquad \xi_i=-C_\beta\beta^{i-1}K(\beta),\\
 \chi_i=2\Re\bigl(C_\zeta\zeta^{i-1}K(\zeta)\bigr),\qquad
 \Delta=2|C_\zeta K(\zeta)|.
 \end{gathered}\label{eq:trace-coordinates}
\end{equation}
Irreducibility implies $K(\beta)\ne0$: otherwise a monomial multiple of $K$ would be divisible by $P$ and vanish at $\zeta$.

\begin{lemma}\label{lem:trace-identity}
For every $i\in\ZZ$,
\[
 \xi_i\equiv\sigma \xi_{j_*-i}+\chi_i\pmod{\ZZ},\qquad \chi_{j_*-i}=-\sigma \chi_i.
\]
\end{lemma}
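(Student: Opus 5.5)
The plan is to pass through the trace identity: for the monic quartic $P$ and an integer Laurent polynomial $F$, the sum $\sum_{\lambda}C_\lambda\lambda^{m}F(\lambda)$ over the four roots of $P$ is an integer for every $m\ge0$. For a polynomial $G(X)=X^mF(X)$ this sum is the coefficient extracted by the standard Euler residue identity $\sum_\lambda \lambda^k/P'(\lambda)\in\ZZ$ for $k\ge0$. For negative exponents we first multiply through by a power of $X$ and use that $P(0)=1$, so that $X^{-1}$ is a polynomial in $X$ modulo $P$ with integer coefficients. Hence, for every Laurent polynomial $G\in\ZZ[X,X^{-1}]$,
\[
 \sum_{\lambda\in\{\beta,r,\zeta,\bar\zeta\}}C_\lambda G(\lambda)\in\ZZ .
\]
We apply this with $G(X)=X^{i-1}K(X)$. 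The $\zeta,\bar\zeta$ terms combine to $\chi_i$, and the $\beta$ term is $-\xi_i$. Therefore
\[
 \xi_i\equiv\chi_i+C_r r^{i-1}K(r)\pmod{\ZZ}.
\]

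It remains to identify the $r$-term with $\sigma\xi_{j_*-i}$. By \eqref{eq:reciprocity-residues}, $C_r=-\beta^2C_\beta$. By \eqref{eq:reciprocal-polynomial} with $X=\beta$, $K(r)=\sigma\beta^{-2c_K}K(\beta)$. Substituting,
\[
 C_r r^{i-1}K(r)=-\sigma C_\beta\beta^{2-(i-1)-2c_K}K(\beta)=-\sigma C_\beta\beta^{(j_*-i)-1}K(\beta)=\sigma\xi_{j_*-i},
\]
since $3-i-2c_K=j_*-i-1$ with $j_*=4-2c_K$. This gives the first congruence.

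For the antisymmetry of $\chi$, the same computation at $\lambda=\zeta$ uses $\zeta^{-1}=\bar\zeta$ and $P'(\bar\zeta)=-\bar\zeta^{2}P'(\zeta)$, so $C_{\bar\zeta}=-\zeta^{2}C_\zeta$. We also have $K(\bar\zeta)=\sigma\zeta^{2c_K}K(\zeta)$ by \eqref{eq:reciprocal-polynomial}. Then
\[
 C_\zeta\zeta^{j_*-i-1}K(\zeta)=C_\zeta\zeta^{3-i-2c_K}K(\zeta).
\]
Conjugating $C_{\bar\zeta}\bar\zeta^{i-1}K(\bar\zeta)=-\sigma C_\zeta\zeta^{2-(i-1)+2c_K}\cdots$ needs care with signs. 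The cleaner route is to observe that $C_{\bar\zeta}=\overline{C_\zeta}$ because $P$ has real coefficients. From this,
\[
 \overline{C_\zeta\zeta^{i-1}K(\zeta)}=C_{\bar\zeta}\bar\zeta^{i-1}K(\bar\zeta)=-\sigma C_\zeta\zeta^{2}\zeta^{1-i}\zeta^{2c_K}\zeta^{-4c_K}\cdots,
\]
where the exponents must be tracked so that they collapse to $C_\zeta\zeta^{j_*-i-1}K(\zeta)$ times $-\sigma$. Taking real parts then gives $\chi_i=-\sigma\chi_{j_*-i}$, which is equivalent to the claim because $\sigma^2=1$.

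The main obstacle is purely bookkeeping: getting the exponent $j_*=4-2c_K$ and the sign $\sigma$ to match in both identities. I would verify this once with a general root $\lambda$, using the single formula $C_{\lambda^{-1}}\lambda^{-(i-1)}K(\lambda^{-1})=-\sigma C_\lambda\lambda^{j_*-i-1}K(\lambda)$. This formula follows directly from \eqref{eq:reciprocity-residues} and \eqref{eq:reciprocal-polynomial}. Specializing it to $\lambda=\beta$ gives the $r$-term above, and specializing it to $\lambda=\zeta$ together with $C_{\bar\zeta}=\overline{C_\zeta}$ gives the antisymmetry of $\chi$.
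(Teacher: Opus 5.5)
Your proof is correct and follows essentially the same route as the paper. The paper gets integrality of the trace sums by propagating $\mathcal T_m\in\ZZ$ backward through the recurrence with constant coefficient one, which is the same fact as your observation that $P(0)=1$ makes $X^{-1}$ an integral polynomial modulo $P$. The $r$-term is identified via \eqref{eq:reciprocity-residues} and \eqref{eq:reciprocal-polynomial} as in the paper, and your general identity $C_{\lambda^{-1}}\lambda^{1-i}K(\lambda^{-1})=-\sigma C_\lambda\lambda^{j_*-i-1}K(\lambda)$ at $\lambda=\zeta$ is exactly the paper's formula $\chi_i=z_0\zeta^{i-1}-\sigma z_0\zeta^{j_*-i-1}$.

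The only blemish is in your abandoned intermediate computation, where you wrote $K(\bar\zeta)=\sigma\zeta^{2c_K}K(\zeta)$; it should be $K(\bar\zeta)=\sigma\zeta^{-2c_K}K(\zeta)$. The general formula in your last paragraph has the correct exponents, so present only that route.
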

\begin{proof}
Let $\mathcal T_m=\sum_{P(\lambda)=0}\lambda^m/P'(\lambda)$. Lagrange interpolation of $1,X,X^2,X^3$, followed by comparison of the coefficient of $X^3$, gives $\mathcal T_0=\mathcal T_1=\mathcal T_2=0$ and $\mathcal T_3=1$. The monic recurrence for $P$, with constant coefficient one, propagates integrality both forward and backward. Thus $\mathcal T_m\in\ZZ$ for all $m\in\ZZ$, and
\[
 \sum_{P(\lambda)=0}C_\lambda\lambda^{i-1}K(\lambda)\in\ZZ.
\]
The expanding term is $-\xi_i$. By \eqref{eq:reciprocal-polynomial} and \eqref{eq:reciprocity-residues}, the stable term is
\[
 C_rr^{i-1}K(r)=-\sigma C_\beta\beta^{3-i-2c_K}K(\beta)=\sigma \xi_{j_*-i}.
\]
The remaining two terms sum to $\chi_i$, proving the congruence. If $z_0=C_\zeta K(\zeta)$, the same reciprocal identities give $\chi_i=z_0\zeta^{i-1}-\sigma z_0\zeta^{j_*-i-1}$. Replacing $i$ by $j_*-i$ proves the second assertion.
\end{proof}

\begin{definition}\label{def:reciprocal-packet}
A centered reciprocal packet associated with $K$ is a sequence $Y_i=\can(\xi_i)$ with no half-integer values before centering, whose nonzero integer forcing is supported in an interval $[a,b]$ with $a+b=j_*$, and for which
\[
Y_i=\xi_i\quad(i<a),\qquad Y_i=\sigma \xi_{j_*-i}+\chi_i\quad(i>b).
\]
Its localization is
\[
 H_i=\begin{cases}Y_i,&i\le b,\\\sigma \xi_{j_*-i},&i>b.\end{cases}
\]
Write
\[
 \ell=b-a+1\ge4,\quad h_0=|\xi_a|,\quad
 E=\sum_{i\in\ZZ}V(H_i),\quad \mathcal V_{\mathrm{core}}=\sum_{i=a}^b|V'(Y_i)|.
 \]
The two real tails have absolute values $h_0r^j$, $j\ge1$.
\end{definition}
A translation of the core to $[0,\ell-1]$ gives Definition~\ref{def:packet} once a common positive localized boundary distance has been specified. Translation changes the phase but not the amplitude of the outgoing wave.

\begin{lemma}\label{lem:mirror}
Suppose the core of a centered reciprocal packet has boundary distance at least $\vartheta>0$. Let $L>\ell$ be even, and set
\[
\ell_{\mathrm{gap}}=L-\ell,\quad\Delta^+=\Delta|1-\zeta^L|,\quad\rho_L=h_0r^{\ell_{\mathrm{gap}}+1},\quad U=\Delta+2h_0r.
\]
Assume
\[
\Delta^++\rho_L\le\vartheta/2,\qquad U<1/2,\qquad\Delta^++h_0r<1/2.
\]
Replacing $K$ by $K^+=(1-X^{-L})K$, centering its expanding mode and localizing after the core $[a,b+L]$ produces another centered reciprocal packet. Its forcing is nonzero, integral and supported in that core. Its neutral amplitude is $\Delta^+$, and its real-tail coefficient is $h_0^+=(1-r^L)h_0\le h_0$. Its global localized boundary distance is at least
\begin{equation}
 \min\{\vartheta-\Delta^+-\rho_L,\;1/2-U,\;1/2-h_0r\}.\label{eq:mirror-margin}
\end{equation}
With
\[
 C_{\mathrm{core}}=2\pi^2\csc^2(\pi\vartheta/2),\qquad C_U=2\pi^2\sec^2(\pi U),
\]
the full localized action satisfies
\begin{equation}
 \begin{aligned}
 E^+\le{}&2E+\mathcal V_{\mathrm{core}}(\Delta^++2\rho_L)+C_{\mathrm{core}}\ell(\Delta^++\rho_L)^2\\
 &+C_U\left\{h_0^2\ell_{\mathrm{gap}}r^{\ell_{\mathrm{gap}}+1}+\frac{2\Delta h_0r}{1-r}
 +\Delta^2\left(\frac{\ell_{\mathrm{gap}}}4+\frac{1}{2|1-\zeta^2|}\right)\right\}.
 \end{aligned}\label{eq:mirror-action}
\end{equation}
\end{lemma}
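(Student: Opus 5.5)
The plan is to take the new packet to be the difference sequence $W_i=Y_i-Y_{i-L}$ and to show that it is exactly the centered sequence $Y^+_i=\can(\xi^+_i)$ attached to $K^+$. The bookkeeping comes first. From $K^+(X^{-1})=(1-X^{L})K(X^{-1})=-\sigma X^{L-2c_K}(1-X^{-L})K(X)$, the polynomial $K^+$ satisfies \eqref{eq:reciprocal-polynomial} with
\[
 \sigma^+=-\sigma,\qquad c_{K^+}=c_K-L/2,\qquad j_*^+=j_*+L .
\]
Here $L$ even makes $c_{K^+}$ an integer. Moreover $K^+(\zeta)\ne0$, since $\zeta^L\ne1$ by Proposition~\ref{prop:quartic-class}. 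The coordinates in \eqref{eq:trace-coordinates} are linear in $K$, which gives
\[
 \xi^+_i=\xi_i-\xi_{i-L},\qquad \chi^+_i=\chi_i-\chi_{i-L},\qquad \Delta^+=\Delta|1-\zeta^L|,
\]
and $h_0^+=|\xi^+_a|=(1-r^L)h_0$. The forcing is $P(\Shift)W=\kappa-\Shift^{-L}\kappa$. Since $L>\ell$, the two copies of $\supp\kappa$ are disjoint, so the forcing is nonzero and integral. Its support lies in $[a,b+L]$, and $a+(b+L)=j_*^+$.

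Next I identify $W$ on each region, using Definition~\ref{def:reciprocal-packet} for $Y$.
\begin{itemize}
\item For $i<a$, both $i$ and $i-L$ lie to the left of the core, so $W_i=\xi_i-\xi_{i-L}=\xi^+_i$.
\item For $i>b+L$, subtracting the two right-tail formulas gives $W_i=\sigma^+\xi^+_{j_*^+-i}+\chi^+_i$.
\item In the gap $b<i<a+L$, we have $W_i=\sigma\xi_{j_*-i}+\chi_i-\xi_{i-L}$. This is a sum of two real tails and one wave of amplitude $\Delta$, so $|W_i|\le U<1/2$.
\item On the first copy $[a,b]$, we have $W_i=Y_i-\xi_{i-L}$ with $|\xi_{i-L}|\le\rho_L$.
\item On the second copy $[a+L,b+L]$, the naive formula $W_i=-Y_{i-L}+\sigma\xi_{j_*-i}+\chi_i$ carries a wave of full size $\Delta$. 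This is the obstacle. I would handle it with the congruence of Lemma~\ref{lem:trace-identity} for $K^+$: $W_i\equiv\sigma^+\xi^+_{j^+_*-i}+\chi^+_i\pmod\ZZ$. Combined with the reflection $i\mapsto j_*^+-i$, which exchanges the two copies, this expresses $W$ on the second copy as $-\sigma$ times its values on the first copy, plus an integer, plus $\chi^+$ (size $\le\Delta^+$), plus a tail of size $\le\rho_L$. Equivalently, one can apply the same reflection argument to the original packet: its core satisfies $Y_{j_*-i}\equiv\sigma Y_i-\sigma\chi_i$. I expect getting the signs and integer shifts exactly right here to be the delicate point.
\end{itemize}
Because $\Delta^++\rho_L\le\vartheta/2$, every second-copy value stays at least $\vartheta/2$ away from $\pm1/2$, so the integer shift must vanish. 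Hence $|W_i|<1/2$ everywhere, the centering has no half-integer ambiguity, and $W=Y^+$ by uniqueness of canonical representatives. The boundary margin \eqref{eq:mirror-margin} then reads off region by region: $\vartheta-\Delta^+-\rho_L$ on the cores, $1/2-U$ in the gap, and $1/2-h_0r$ (using $\Delta^++h_0r<1/2$) on the outer localized tails.

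For the action bound \eqref{eq:mirror-action}, I split $E^+=\sum V(H^+_i)$ over the same regions. On each copy of the core, a second-order Taylor expansion around the old $H$ with \eqref{eq:potential-away-wall} at margin $\vartheta/2$ gives the old core action plus $|V'|\cdot(\Delta^++\rho_L)$ plus $\tfrac12C_{\rm core}(\Delta^++\rho_L)^2$. Summed over both copies this yields $2\times$core action, the term $\mathcal V_{\rm core}(\Delta^++2\rho_L)$ (with $\rho_L$ counted on both copies), and $C_{\rm core}\ell(\Delta^++\rho_L)^2$. On the outer tails, $|\xi^+|\le|\xi|$ termwise, so monotonicity of $V$ bounds these by the old tail actions; together with the cores this gives $2E$.

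In the gap, I use $V(t)\le\tfrac12C_U t^2$ from \eqref{eq:potential-tail}, valid since $|t|\le U$, and expand the square of (left tail $+$ right tail $+$ wave $\chi_i$):
\begin{itemize}
\item the pure-tail squares are already part of the tails of the two old $E$'s;
\item the cross product of the two tails is constant, $h_0^2r^{\ell_{\rm gap}+1}$, at each gap site, giving the $h_0^2\ell_{\rm gap}r^{\ell_{\rm gap}+1}$ term;
\item the tail–wave cross terms sum geometrically to $2\Delta h_0r/(1-r)$;
\item $\chi_i^2=\tfrac{\Delta^2}{2}\bigl(1+\cos(\cdot)\bigr)$, whose mean term gives $\Delta^2\ell_{\rm gap}/4$ after the factor $\tfrac12$, and whose oscillatory part is a geometric sum in $\zeta^2$ bounded by $1/(2|1-\zeta^2|)$.
\end{itemize}
Adding the core, tail and gap contributions gives \eqref{eq:mirror-action}.
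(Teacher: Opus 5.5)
Your treatment of $K^+$, the first copy, the gap, the exterior tails and the core Taylor step with $\Delta^++2\rho_L$ agrees with the paper. There are two problems: a genuine gap on the second copy, and an error in the gap action.

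\textbf{The second copy.} Your argument depends on the identity $W=Y^+$, where $W_i=Y_i-Y_{i-L}$ is the literal difference and $Y^+=\can(\xi^+)$ is the centered new sequence. On the second copy this identity is not proved, and it can fail under the stated hypotheses. Your reflection argument does show that $\sigma(-Y_k+\xi_{k-L}+\chi^+_k)$, with $k=j_*+L-i\in[a,b]$, is congruent to $\xi^+_i$ and lies within $\Delta^++\rho_L\le\vartheta/2$ of $-\sigma Y_k$. So this expression is the canonical representative $Y^+_i$. It says nothing about whether $W_i$ itself is canonical, so the conclusion ``the integer shift must vanish'' does not follow.

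Write $i=m+L$ with $m\in[a,b]$ and use $\chi_{j_*-x}=-\sigma\chi_x$. A direct computation gives
\[
 W_i-Y^+_i=\sigma Y_{j_*-m}-Y_m+\chi_m\in\ZZ .
\]
Hence $W=Y^+$ on the second copy exactly when the old core is reflection-symmetric, $Y_{j_*-m}=\sigma(Y_m-\chi_m)$. The available bound is $|\sigma Y_{j_*-m}-Y_m+\chi_m|\le 1-2\vartheta+\Delta$. This kills the integer only when $\Delta<2\vartheta$, but the lemma assumes smallness of $\Delta^+$, not of $\Delta$. The condition $\Delta<2\vartheta$ does hold inside the controlled iteration, where $\Delta_{k-1}<\eta/4$ and the margin exceeds $3\eta$. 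It is not available at the step from the seed, since nothing relates $\Delta_{\mathrm{seed}}$ to $\vartheta_{\mathrm{seed}}$.

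When the integer is nonzero, $|W_i|>1/2$. In that case $P(\Shift)W=\kappa-\Shift^{-L}\kappa$ is not the forcing of the new packet, so your support and nonvanishing argument also fails. The repair is the paper's route. Drop $W$ on the second copy and take the new packet to be $\can(\xi^+)$, using the reflected formula there. Then prove the forcing properties directly:
\begin{itemize}
\item integrality holds because $Y^+-\xi^+$ is integer-valued and $\xi^+$ is homogeneous;
\item the support lies in $[a,b+L]$ because $Y^+_i=Y_i-\xi_{i-L}$ for all $i\le a+3$ (this uses $\ell\ge4$), while $Y^+$ is a homogeneous tail for $i>b+L$;
\item the forcing is nonzero by Lemma~\ref{lem:homogeneous-modes}, since $K^+(\beta)=(1-r^L)K(\beta)\ne0$.
\end{itemize}
The literal difference $W$ is the route of the appendix's delayed-difference lemma, and that lemma pays the full $\Delta$ on the translated copy. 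The gain in the mirror lemma comes precisely from the second copy being a reflected copy of the old core.

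\textbf{The gap action.} You bound $V(x_j+z_j+\chi_{b+j})$ by $\tfrac12C_U(x_j+z_j+\chi_{b+j})^2$. This produces the pure-tail terms $\pi^2\sec^2(\pi U)(x_j^2+z_j^2)$, which exceed $V(x_j)+V(z_j)$. They are therefore not absorbed by the old tail sums in $2E$. You need the mixed estimate
\[
 V(x+z+w)-V(x)-V(z)\le C_U\Bigl(|xz|+(|x|+|z|)|w|+\frac{w^2}{2}\Bigr).
\]
It comes from $V(x+z)-V(x)-V(z)=\int_0^x\int_0^zV''(u+v)\,dv\,du$, followed by a Taylor step in $w$ using $|V'(s)|\le C_U|s|$ on $[-U,U]$. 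This keeps the base terms exactly equal to $V(x_j)+V(z_j)$, which the old tails do bound. As written, your argument only yields \eqref{eq:mirror-action} with an additional term of size up to $C_Uh_0^2r^2/(1-r^2)$.
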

\begin{proof}
The new reciprocal sign is $-\sigma$ and its reflection index is $j_*+L$, since $K^+(X^{-1})=-\sigma X^{L-2c_K}K^+(X)$. Evenness of $L$ ensures that the new center parameter $c_K-L/2$ is integral. Directly from \eqref{eq:trace-coordinates},
\[
 \xi_i^+=\xi_i-\xi_{i-L}=(1-r^L)\xi_i,\qquad \chi_i^+=\chi_i-\chi_{i-L}.
\]
The neutral amplitude is exactly $\Delta^+$ and is nonzero because $\zeta$ is not a root of unity.

First consider the two copies of the old core. For $a\le i\le b$, the number $Y_i-\xi_{i-L}$ is congruent to $\xi_i^+$. Since $|\xi_{i-L}|\le\rho_L$, it remains canonical and equals $Y_i^+$. On the second core write
\[
 i=L+j_*-k,\qquad a\le k\le b.
\]
By Lemma~\ref{lem:trace-identity}, the trace identities give
\[
 Y_i^+=\sigma\bigl(-Y_k+\xi_{k-L}+\chi_k^+\bigr).
 \]
The right side is congruent to $\xi_i^+$ and differs from $-\sigma Y_k$ by at most $\rho_L+\Delta^+\le\vartheta/2$, so is the canonical representative. Taylor's theorem and evenness of $V$ bound the action of the two copies by
\begin{equation}
 2\sum_{i=a}^bV(Y_i)+\mathcal V_{\mathrm{core}}(\Delta^++2\rho_L)
 +C_{\mathrm{core}}\ell(\Delta^++\rho_L)^2.
 \label{eq:core-action}
\end{equation}
Indeed, the quadratic remainder is at most $(C_{\mathrm{core}}\ell/2)(\rho_L^2+(\rho_L+\Delta^+)^2)$.

On the gap, for $i=b+j$, $1\le j\le\ell_{\mathrm{gap}}$, put $x_j=\sigma \xi_{j_*-i}$ and $z_j=-\xi_{i-L}$. Then $|x_j|=h_0r^j$, $|z_j|=h_0r^{\ell_{\mathrm{gap}}+1-j}$, and
\[
Y_i^+=x_j+z_j+\chi_i.
\]
The trace congruence and the bound $|x_j|+|z_j|+|\chi_i|\le U<1/2$ justify this canonical expression. On $[-U,U]$, $V''\le C_U$ and $|V'(w)|\le C_U|w|$. Whenever $|x|+|z|+|w|\le U$, integration of the mixed derivative and then Taylor expansion in $w$ give
\begin{equation}
 V(x+z+w)-V(x)-V(z)\le C_U\left(|xz|+(|x|+|z|)|w|+\frac{w^2}{2}\right).
 \label{eq:gap-taylor}
\end{equation}
For the mixed difference without the oscillatory term, use $\int_0^x\int_0^zV''(u+v)\,dv\,du$; all integration paths stay in $[-U,U]$. The first two error terms in \eqref{eq:gap-taylor} satisfy
\[
 \sum_{j=1}^{\ell_{\mathrm{gap}}}|x_jz_j|=h_0^2\ell_{\mathrm{gap}}r^{\ell_{\mathrm{gap}}+1},\qquad
 \sum_{j=1}^{\ell_{\mathrm{gap}}}(|x_j|+|z_j|)|\chi_{b+j}|\le\frac{2\Delta h_0r}{1-r}.
\]
Writing $\chi_{b+j}=\Delta\cos(\theta+j\varphi)$ and using a geometric sum at $\zeta^2$ gives
\begin{equation}
 \frac12\sum_{j=1}^{\ell_{\mathrm{gap}}}\chi_{b+j}^2\le\Delta^2\left(\frac{\ell_{\mathrm{gap}}}4+\frac{1}{2|1-\zeta^2|}\right).
 \label{eq:gap-square-sum}
\end{equation}

Outside the two cores, for $i<a$, $(1-r^L)\xi_i$ is canonical. For $i>b+L$, the new trace congruence gives
\[
 Y_i^+=-\sigma \xi^+_{j_*+L-i}+\chi_i^+,
\]
which is canonical by $h_0r+\Delta^+<1/2$. The localized exterior tails have absolute values $h_0^+r^j$, $j\ge1$.

Let $T_0=\sum_{j=1}^\infty V(h_0r^j)$. The gap base terms $V(x_j)+V(z_j)$ sum to at most $2T_0$, and the two new exterior tails sum to at most $2T_0$. These are bounded by the four old tail sums already contained in $2E$. Combining this with \eqref{eq:core-action}--\eqref{eq:gap-square-sum} proves \eqref{eq:mirror-action}. The same regional estimates give \eqref{eq:mirror-margin}.

It remains to check the support and nonvanishing of the forcing. Centering a homogeneous sequence changes it by an integer sequence, so the new forcing is integral. For all $i\le a+3$ one has $Y_i^+=Y_i-\xi_{i-L}$, using the left-tail expression and the first four positions of the first core; these exist because $\ell\ge4$. For $j<a$, every coordinate in the five-term recurrence has index at most $a+3$. The new forcing there equals the old forcing and vanishes. For $j>b+L$, all coordinates lie in the homogeneous stable--neutral tail.

If the forcing vanished everywhere, Lemma~\ref{lem:homogeneous-modes} would make $Y^+$ zero, since it decays at $-\infty$ and is bounded at $+\infty$. This contradicts its nonzero expanding tail: $K^+(\beta)=(1-r^L)K(\beta)\ne0$.
\end{proof}

Define the integral Laurent polynomials
\[
U_1(X)=1+X+X^{-1},\qquad U_2(X)=X+X^{-1},\qquad K_i(X)=U_i(X)^{10}.
\]
Their values are
\[
 \begin{array}{c|cc}
 &U_i(\beta_i)&U_i(\zeta_i)\\\hline
 i=1&(3+\sqrt{13})/2&(3-\sqrt{13})/2\\
 i=2&1+\sqrt2&1-\sqrt2.
 \end{array}
\]
The product in each row is $-1$. In particular, $U_i(\beta_i)>1$ and $0<|U_i(\zeta_i)|<1/2$. These seeds are chosen only for the qualitative existence argument; the quantitative constructions in Appendix~\ref{app:upper} use separate parameter-specific seeds optimized for the explicit bounds.

\begin{lemma}\label{lem:algebraic-seed}
For either polynomial, $K_i$ gives a centered reciprocal packet with finite core, nonzero integer forcing, finite localized action, and a positive global localized boundary distance $\vartheta_{\mathrm{seed}}$. Its real-tail coefficient $h$ and neutral amplitude satisfy
\[
 0<h<1/100,\qquad 0<\Delta_{\mathrm{seed}}<1/100.
 \]
The positive boundary distance is obtained without evaluating any core coordinate or the minimum itself.
\end{lemma}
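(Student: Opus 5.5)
Start from the data in \eqref{eq:trace-coordinates} for $K=K_i=U_i^{10}$. Since $U_i(X^{-1})=U_i(X)$, \eqref{eq:reciprocal-polynomial} holds with $\sigma=1$ and $c_K=0$, so $j_*=4$. Also $K_i(\zeta_i)=U_i(\zeta_i)^{10}\ne0$. The neutral amplitude is
\[
\Delta_{\mathrm{seed}}=2|C_\zeta|\,|U_i(\zeta_i)|^{10}\le 2|U_i(\zeta_i)|^{10},
\]
using $|P'(\zeta)|>1$ from \eqref{eq:unit-circle-bounds}. Here $|U_1(\zeta_1)|=(\sqrt{13}-3)/2<0.31$ and $|U_2(\zeta_2)|=\sqrt2-1<0.42$, so $\Delta_{\mathrm{seed}}<2\cdot 0.42^{10}<1/100$.

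The mechanism is that $\xi_i=-C_\beta\beta^{i-1}K(\beta)$ grows geometrically from size $0$ at $-\infty$. Set $Y_i=\can(\xi_i)$. We will identify the core as the window $[a,b]$ with $a+b=4$, symmetric about $j_*/2=2$, outside which both expressions in Definition~\ref{def:reciprocal-packet} are already canonical. Choose $a$ as the largest index, with $a\equiv j_*-b$, such that $|\xi_i|\le h$ for all $i<a$, where $h=|\xi_a|$. Taking $a$ to be the last index with $|\xi_a|<1/100$ gives $h=|\xi_a|<1/100$ and $h r^j$ tails on the left. Then set $b=4-a$.

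For $i>b$, Lemma~\ref{lem:trace-identity} gives $\xi_i\equiv\xi_{4-i}+\chi_i \pmod{\ZZ}$. Since $|\xi_{4-i}|\le hr^{i-b}$ and $|\chi_i|\le\Delta_{\mathrm{seed}}$, we have $|\xi_{4-i}+\chi_i|<1/50<1/2$, so it equals $\can(\xi_i)$. This is the required right-tail form. We must check that $\ell=b-a+1\ge4$. This holds because $|\xi_2|=|C_\beta|\beta|K(\beta)|$ is large: $K(\beta)=U(\beta)^{10}$ with $U(\beta)>3$ or $>2.4$. So the index where $|\xi|$ drops below $1/100$ lies well to the left of $2$. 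The forcing $P(\Shift)Y$ is an integer sequence, since $Y-\xi$ is integral and $P(\Shift)\xi=0$. It vanishes outside $[a,b]$ up to the four-step overlap: to the left the recurrence sees only $Y=\xi$, and to the right only the homogeneous stable--neutral expression. If needed we enlarge $[a,b]$ symmetrically by $2$ to absorb the window, which keeps $a+b=4$. The forcing is nonzero by the argument at the end of Lemma~\ref{lem:mirror}, using Lemma~\ref{lem:homogeneous-modes} and $K(\beta)\ne0$. Finiteness of $E$ follows from \eqref{eq:potential-tail} and the geometric tails.

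The main obstacle is the positive boundary distance \emph{without evaluating core coordinates}. The plan is to use the fact that $\xi_i$ is an algebraic-number expression in which $\beta^{i-1}K(\beta)$ is not congruent to $1/2$ modulo $\ZZ$. Concretely, $2\xi_i$ is not an odd integer for every $i$. Otherwise $2\xi_i-\chi_i\cdot 0$, or more precisely the integer relation $\sum_\lambda C_\lambda\lambda^{i-1}K(\lambda)\in\ZZ$, combined with $\xi_i\in\tfrac12+\ZZ$, would force the remaining conjugate contributions $\xi_{4-i}+\chi_i$ to be a half-integer. Applying the Galois action shows that $2\xi_i\in\ZZ$ is impossible: a rational value would be fixed by the automorphism sending $\beta\mapsto\zeta$, but $|\beta^{i-1}K(\beta)|\ne|\zeta^{i-1}K(\zeta)|$ up to $C$ factors. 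Hence no core coordinate is a half-integer, and we never hit the centering boundary. Since the core is a finite set of indices, the minimum distance $\vartheta_{\mathrm{seed}}=\min_{a\le i\le b}\bigl(1/2-|Y_i|\bigr)$ is positive without computing it. Outside the core the distance is at least $1/2-1/50$. The delicate part is making this irrationality step clean; I would phrase it as $\xi_i\notin\QQ$, noting that $\xi_i\in\QQ(\beta)$ has a nontrivial conjugate $\sigma(\xi_i)=-C_\zeta\zeta^{i-1}K(\zeta)$ that is nonreal. Indeed $\zeta^{i-1}K(\zeta)C_\zeta$ is nonreal unless a special phase occurs, and that alternative I would exclude by comparing with the conjugate $\bar\zeta$, which gives a different value of modulus $<1$ while $|\xi_i|$ for core $i$ is at least $1/100$ times a power of $\beta$.
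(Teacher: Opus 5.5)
Your outline matches the paper's proof at every stage:
\begin{itemize}
\item $\sigma=1$, $c_K=0$ and $j_*=4$;
\item $\Delta_{\mathrm{seed}}<1/100$, from $|P'(\zeta)|>1$ and $|U_i(\zeta_i)|<1/2$;
\item a symmetric core $[a,4-a]$ with $h=|\xi_a|<1/100$;
\item the canonical right tail, from Lemma~\ref{lem:trace-identity};
\item nonzero forcing, from Lemma~\ref{lem:homogeneous-modes};
\item $\vartheta_{\mathrm{seed}}>0$ as the minimum of finitely many positive numbers, once half-integer values of $\xi_i$ are excluded.
\end{itemize}

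The gap is in that last exclusion step. You correctly identify it as the crux, but your final version does not close it. You compare a conjugate of modulus ``$<1$'' with the bound $|\xi_i|\ge\beta^k/100$ on core indices. That gives no contradiction for the first several core indices. You also route the argument through nonreality of $-C_\zeta\zeta^{i-1}K(\zeta)$ and through the claim $\xi_i\notin\QQ$ for every $i$. You prove neither of these, and neither is needed.

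The missing step is to use the bound you have already proved. The embedding $\QQ(\beta)\to\CC$ with $\beta\mapsto\zeta$ sends $\xi_i$ to $-C_\zeta\zeta^{i-1}K(\zeta)$, whose modulus is $\Delta_{\mathrm{seed}}/2<1/200$ for every $i$. If $\xi_i\in\ZZ+\frac12$, then $\xi_i$ is rational, hence fixed by this embedding. But a half-integer has modulus at least $1/2>1/200$, a contradiction. This handles all $i$ at once and never refers to the core, which is exactly what ``without evaluating any core coordinate'' requires.

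Two smaller repairs are also needed.
\begin{enumerate}
\item For the forcing to vanish at $j<a$, the five-term recurrence needs $Y_i=\xi_i$ for all $i\le a+3$, not only for $i<a$. Enlarging the core symmetrically by $2$ does not settle this by itself. The bound $|\xi_{a+3}|\le\beta^3h<8/100<1/2$ does.
\item You take $a$ to be the last index with $|\xi_a|<1/100$. That obliges you to bound $|C_\beta|$ from below to ensure $a\le0$, which you assert but do not do. Instead, choose any $a\le0$ with $|\xi_a|<1/100$, as the paper does. Then $\ell=5-2a\ge5$ automatically, with no numerics.
\end{enumerate}
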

\begin{proof}
Suppress the parameter index. The polynomial $K=U^{10}$ has $\sigma=1$, $c_K=0$ and $j_*=4$ in \eqref{eq:reciprocal-polynomial}. By \eqref{eq:unit-circle-bounds},
\[
 0<\Delta_{\mathrm{seed}}=2|C_\zeta K(\zeta)|<2\cdot2^{-10}=1/512<1/100.
\]
For every $j\in\ZZ$, the embedding $\QQ(\beta)\to\CC$ sending $\beta$ to $\zeta$ sends $\xi_j=-C_\beta\beta^{j-1}K(\beta)$ to $-C_\zeta\zeta^{j-1}K(\zeta)$, whose modulus is $\Delta_{\mathrm{seed}}/2<1/200$. If $\xi_j\in\ZZ+1/2$, it would be a rational number fixed by the embedding, with modulus at least $1/2$, a contradiction. Thus $Y_j=\can(\xi_j)$ never has a half-integer value before centering.

Choose $a\le0$ sufficiently negative that $h=|\xi_a|<1/100$, and put $b=4-a$. For $j\le a+3$, $|\xi_j|\le\beta^3h<8/100<1/2$, hence $Y_j=\xi_j$. For $j>b$, the trace identity gives
\[
 Y_j=\xi_{4-j}+\chi_j,\qquad |\xi_{4-j}|+|\chi_j|\le hr^{j-b}+\Delta_{\mathrm{seed}}<1/2.
\]
The forcing is integral and vanishes for $j<a$ and $j>b$, by the five-coordinate recurrence and these homogeneous tail formulas. It is nonzero by Lemma~\ref{lem:homogeneous-modes}, since $K(\beta)\ne0$. The core length $\ell_{\mathrm{seed}}=b-a+1\ge5$ is finite.

Localize after $b$. The tails have absolute values $hr^j$, $j\ge1$, and
\[
\vartheta_{\mathrm{seed}}=\min\left\{\min_{a\le j\le b}\left(\frac12-|Y_j|\right),\frac12-h\right\}>0.
\]
This is the minimum of a finite nonempty set of positive numbers, and its positivity has already been proved without evaluating it. The core action is finite, and the tails converge by \eqref{eq:potential-tail}.
\end{proof}

\section{Controlled returns and the proof of the main theorem}\label{sec:returns}
Put $\omega=\zeta^2$.  For every integer $R\ge1$, let $q(R)$ be the least positive integer attaining
\[
 \min_{1\le q\le R}\operatorname{err}(q),
 \qquad
 \operatorname{err}(q)=|1-\omega^q|.
\]
The minimizer is taken over a finite set; no numerical value of it will be used.

\begin{lemma}\label{lem:best-returns}
For every integer $R\ge1$,
\[ 0<\operatorname{err}(q(R))\le\frac{2\pi}{R},
 \qquad
 q(R)\longrightarrow\infty.
\]
If $R_*\ge1$, $q_*=q(R_*)$, and
$\operatorname{err}(q(R))<\operatorname{err}(q_*)$, then
$q(R)>R_*\ge q_*$.
\end{lemma}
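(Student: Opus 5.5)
The proof splits into three parts: a pigeonhole bound for the error, an argument that $q(R)\to\infty$, and a monotonicity property of the minimizer.

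\emph{Positivity and the bound $2\pi/R$.} Positivity is immediate from Proposition~\ref{prop:quartic-class}. Since $\omega=\zeta^2$ is not a root of unity, $\omega^q\ne1$ for every $q\ge1$, so every $\operatorname{err}(q)>0$, and in particular the minimum over the finite set $\{1,\dots,R\}$ is positive. For the upper bound, write $\omega=e^{2\pi i\alpha}$ and place the $R+1$ points $\omega^0,\omega^1,\dots,\omega^R$ on the unit circle. Divide the circle into $R$ closed arcs of angular length $2\pi/R$. By pigeonhole, two of the points, $\omega^m$ and $\omega^{m'}$ with $0\le m<m'\le R$, lie in a common arc. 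Hence
\[
 |1-\omega^{m'-m}|=|\omega^{m'}-\omega^m|\le 2\sin(\pi/R)\le 2\pi/R,
\]
because chord length is at most arc length. Since $1\le q=m'-m\le R$, the minimum satisfies $\operatorname{err}(q(R))\le2\pi/R$.

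\emph{Divergence of $q(R)$.} I would argue by contradiction. If $q(R)$ did not tend to infinity, then $q(R)=q_0$ for some fixed $q_0$ along infinitely many $R$. The error $\operatorname{err}(q_0)>0$ is a fixed positive number, yet it would be bounded by $2\pi/R\to0$, which is impossible. An equivalent phrasing: for each $Q$, the number $\min_{q\le Q}\operatorname{err}(q)$ is positive, so once $2\pi/R$ falls below it, every minimizer must exceed $Q$.

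\emph{The comparison statement.} The inequality $R_*\ge q_*$ holds by definition, since $q_*=q(R_*)\in[1,R_*]$. For the other inequality, suppose $q(R)\le R_*$. Then $q(R)$ is an admissible candidate in the minimization defining $q_*$, so $\operatorname{err}(q_*)\le\operatorname{err}(q(R))$. This contradicts the hypothesis $\operatorname{err}(q(R))<\operatorname{err}(q_*)$, hence $q(R)>R_*$.

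None of these steps is a real obstacle. The only point requiring care is using the non-root-of-unity property of $\zeta^2$ from Proposition~\ref{prop:quartic-class} for positivity.
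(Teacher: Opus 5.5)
Your proof is correct and follows the paper's argument essentially step for step: pigeonhole on the $R+1$ points $\omega^0,\dots,\omega^R$, then the positive minimum of $\operatorname{err}$ over any fixed finite set of $q$ compared against $2\pi/R\to0$, then admissibility of $q(R)$ in the minimization that defines $q_*$. One cosmetic slip is that your intermediate bound $2\sin(\pi/R)$ fails at $R=1$, where it equals $0$; the chord-versus-arc comparison you cite already gives $|\omega^{m'}-\omega^m|\le 2\pi/R$ for every $R\ge1$, so nothing breaks.
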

\begin{proof}
Write $\omega=e^{2\pi i\alpha}$ with $\alpha$ irrational.  Among the $R+1$ fractional parts of
$0,\alpha,\ldots,R\alpha$, two lie in the same one of $R$ equal half-open intervals.  Their difference gives $1\le q\le R$ with
$\|q\alpha\|_{\RR/\ZZ}\le1/R$, and hence
$|1-e^{2\pi iq\alpha}|\le2\pi/R$.  No error vanishes because $\omega$ is not a root of unity.  Since every fixed finite set of denominators has a positive minimum error whereas $2\pi/R\to0$, the minimizers tend to infinity.  For the final assertion, every $1\le q\le R_*$ has error at least $\operatorname{err}(q_*)$.  A strict improvement must therefore use a denominator larger than $R_*$, and hence larger than $q_*$.
\end{proof}

Take the packet in Lemma~\ref{lem:algebraic-seed} and fix
\[
 \eta=\min\{\vartheta_{\mathrm{seed}}/8,1/100\}>0.
\]
Keep $h$ as a common upper bound for the real-tail coefficients.  Choose $R_0$ sufficiently large, put $q_0=q(R_0)$ and $L_0=2q_0$, and apply Lemma~\ref{lem:mirror} once.  The choice may be made so that the resulting packet has core length $\ell_0$ and amplitude $\Delta_0$ satisfying
\begin{align}
 \ell_0&=\ell_{\mathrm{seed}}+2q_0<3q_0,\label{eq:preparation-length}\\
 0<\Delta_0&=\Delta_{\mathrm{seed}}\operatorname{err}(q_0)
 <\min\{e^{-16},\eta/4\},\\
 \frac{hr^{\ell_0}}{1-r^{\ell_0}}&<\eta/4,
 \qquad
 \text{localized boundary distance at least }4\eta.\label{eq:preparation-margin}
\end{align}
Indeed, $q_0\to\infty$ and $\operatorname{err}(q_0)\to0$.  We may first require $2q_0>\ell_{\mathrm{seed}}$.  The perturbation of the old core is at most
$\Delta_{\mathrm{seed}}\operatorname{err}(q_0)+hr^{2q_0-\ell_{\mathrm{seed}}+1}$, which tends to zero.  The gap parameter in the mirror lemma is bounded by
$\Delta_{\mathrm{seed}}+2hr<3/100$, while the exterior real tail is bounded by $hr<1/100$; both are therefore uniformly separated from the boundary of $[-1/2,1/2)$.  Since $\vartheta_{\mathrm{seed}}/2\ge4\eta$, all the requirements in \eqref{eq:preparation-length}--\eqref{eq:preparation-margin} hold for one sufficiently large $R_0$.  Fix such a choice.

\begin{theorem}\label{thm:controlled-iteration}
Under the conditions
\eqref{eq:preparation-length}--\eqref{eq:preparation-margin},
there are centered reciprocal packets indexed by $k\ge0$ with core
lengths $\ell_k$, neutral amplitudes $\Delta_k$, real-tail coefficients
at most $h$, and a common localized boundary distance at least $3\eta$.
There are constants $C_E,C_{\mathcal V}<\infty$, independent of $k$, such that
\begin{align}
 0<\Delta_k&\le e^{-16\cdot2^k},
 &\log(1/\Delta_k)&\ge10(5/2)^k,\label{eq:iteration-amplitude}\\
 E_k&\le C_E2^k,
 &\mathcal V_k&\le C_{\mathcal V}(\ell_k+1),\label{eq:iteration-action}\\
 \ell_k&\le\frac32\Delta_{k-1}^{-3/2}\quad(k\ge1),
 &\Delta_k\ell_k&\longrightarrow0.\nonumber
\end{align}
Here $\mathcal V_k=\sum_{j\in\mathbb{Z}}
\bigl|V'(H_j^{(k)})\bigr|$ is the full derivative sum of the localized packet.
\end{theorem}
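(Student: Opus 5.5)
We construct the packets inductively, starting from the prepared packet of \eqref{eq:preparation-length}--\eqref{eq:preparation-margin} as $k=0$ and applying Lemma~\ref{lem:mirror} once at each step. Given the $k$th packet with $(\ell_k,\Delta_k,E_k,h_0^{(k)}\le h)$ and boundary distance at least $3\eta$, we choose the delay
\[
L_{k}=2q(R_k),\qquad R_k=\bigl\lfloor \Delta_k^{-3/2}/2\bigr\rfloor .
\]
Then $|1-\zeta^{L_k}|=\operatorname{err}(q(R_k))\le 2\pi/R_k\lesssim\Delta_k^{3/2}$ by Lemma~\ref{lem:best-returns}. This gives $\Delta_{k+1}\le\Delta_k\cdot 5\pi\Delta_k^{3/2}$, so $\Delta_{k+1}\le\Delta_k^{5/2}$ once $\Delta_k<e^{-16}$ absorbs the constant. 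Iterating from $\log(1/\Delta_0)\ge16$ yields $\log(1/\Delta_k)\ge 16(5/2)^k$, which gives both inequalities in \eqref{eq:iteration-amplitude}, since $(5/2)^k\ge 2^k$.

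The length satisfies $\ell_{k+1}=\ell_k+L_k\le\ell_k+2R_k\le \ell_k+\Delta_k^{-3/2}$. We must make sure $L_k>\ell_k$, so that the two core copies are separated by a long gap. By induction $\ell_k\le\frac32\Delta_{k-1}^{-3/2}$, and $\Delta_k\le\Delta_{k-1}^{5/2}$ makes $\ell_k$ negligible against $\Delta_k^{-1}$. If the best return $q(R_k)$ happens to be too small, we instead take $q(R_k')$ for the least $R'\le R_k$ whose minimizer exceeds $\ell_k$. Lemma~\ref{lem:best-returns} (strict improvement forces a larger denominator) controls this: either a minimizer with $q>\ell_k/2$ exists with error $\le 2\pi/R_k$, or we enlarge $R_k$ by a factor only where needed. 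This step, guaranteeing $L_k>\ell_k$ while keeping $L_k\lesssim\Delta_k^{-3/2}$ without any Diophantine hypothesis, is the main obstacle. I would first try the simplest approach: require $R_k\ge\ell_k$, note that the minimizer over $[1,R_k]$ with $\ell_k<R_k$ may still be small, and then pass to the minimizer over $(\ell_k/2,\ell_k/2+R_k]$. This is again a pigeonhole on $R_k$ points, applied to differences, with the same error bound.

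Then $\ell_{k+1}\le\frac32\Delta_k^{-3/2}$, so $\Delta_{k+1}\ell_{k+1}\le\frac32\Delta_k\to0$. The gap is $\ell_{\mathrm{gap}}=L_k-\ell_k\ge L_k/2$, so $\rho_{L_k}=h r^{\ell_{\mathrm{gap}}+1}$ is superexponentially small in $\Delta_k^{-1}$. The hypotheses of Lemma~\ref{lem:mirror} hold: $U\le\Delta_k+2hr<1/4$, and $\Delta_{k+1}+\rho_{L_k}\le\eta/2^{k+2}$. The margin \eqref{eq:mirror-margin} therefore loses at most $\sum_k \eta 2^{-k-2}\le\eta$ from the initial $4\eta$, which leaves at least $3\eta$. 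The real-tail coefficient is nonincreasing, since $h_0^+=(1-r^L)h_0$.

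For the action we use \eqref{eq:mirror-action}, with $C_{\mathrm{core}}$ and $C_U$ uniform because $\vartheta\ge3\eta$ and $U\le1/4$. First, $\mathcal V_{\mathrm{core},k}\le C\ell_k$ by \eqref{eq:potential-away-wall}, and the tail part is handled as in the proof of Corollary~\ref{cor:packet-criterion}, which gives $\mathcal V_k\le C_{\mathcal V}(\ell_k+1)$. Second, $\ell_k(\Delta_{k+1}+2\rho)\lesssim\Delta_k^{-3/2}\Delta_k^{5/2}=\Delta_k$. Third, $\ell_k(\Delta_{k+1}+\rho)^2$ is even smaller. Fourth, $h^2\ell_{\mathrm{gap}}r^{\ell_{\mathrm{gap}}}$ is tiny. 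Fifth, $\Delta_kh$ is bounded. Finally, $\Delta_k^2\ell_{\mathrm{gap}}\lesssim\Delta_k^{1/2}$. Hence $E_{k+1}\le2E_k+c\,\Delta_k^{1/2}+c'$ with summable, bounded error. Unrolling gives $E_k\le 2^k(E_0+\sum_j 2^{-j-1}(c'+c\Delta_j^{1/2}))\le C_E2^k$, which completes \eqref{eq:iteration-action}.
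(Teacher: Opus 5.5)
\textbf{There is a genuine gap, at exactly the step you flag as the main obstacle.} You never produce a delay that separates the two copies of the old core by a gap comparable to $\ell_k$ while also keeping $L_k\lesssim\Delta_k^{-3/2}$ and $|1-\zeta^{L_k}|\lesssim\Delta_k^{3/2}$ up to harmless factors. None of your three repairs achieves this.

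\emph{First repair.} The map $R\mapsto q(R)$ is nondecreasing, because a strictly smaller minimum can only be attained beyond the previous range. So ``the least $R'\le R_k$ whose minimizer exceeds $\ell_k$'' does not exist when $q(R_k)$ is too small.

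\emph{Second repair.} Enlarging $R$ until the minimizer is large may push $L_k$ up to the next best-return denominator. Without a Diophantine hypothesis that denominator is not controlled by $\Delta_k$. You would then lose both $\ell_{k+1}\le\frac32\Delta_k^{-3/2}$ and the bound on the gap term $\Delta_k^2\ell_{\mathrm{gap}}$ in \eqref{eq:mirror-action}.

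\emph{Third repair.} The shifted-window pigeonhole is false as a general principle. Writing $\omega=e^{2\pi i\alpha}$, the points $q\alpha$ with $q\in(s,s+R]$ have differences indexed by $[1,R-1]$, not by the window. So Dirichlet's argument says nothing about returns inside the window. Suppose that at the relevant scale $\omega$ is extremely close to a root of unity of small order $q_*$. Then the good returns below the next denominator are essentially the multiples $mq_*$, whose errors grow linearly in $m$. A window starting near $\ell_k/2$ then only sees errors of order $(\ell_k/q_*)\|q_*\alpha\|_{\RR/\ZZ}$, which need not be $O(1/R_k)$.

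Your later estimates also need more than $L_k>\ell_k$. The claim $\ell_{\mathrm{gap}}\ge L_k/2$ requires $L_k\ge2\ell_k$. Both the summability of $\rho_{L_k}$ and the bound $\mathcal V_{\mathrm{core}}\rho_{L_k}\lesssim\ell_kr^{\ell_{\mathrm{gap}}}$ require $\ell_{\mathrm{gap}}$ to be comparable to $\ell_k$.

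\textbf{How the paper closes the step.} Your instinct to invoke Lemma~\ref{lem:best-returns} is right, but it must be combined with a controlled multiple of the best return. In the paper's indexing, $m_k=3^k$, $R_k=\lfloor\Delta_{k-1}^{-3/2}/(2m_k)\rfloor$, $q_k=q(R_k)$, and the delay is $L_k=2m_kq_k$.

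Strict improvement is then automatic. Since $\Delta_{k-1}\le m_{k-1}\Delta_{k-2}\operatorname{err}(q_{k-1})$ and $\Delta_{k-2}<1$, one has $\operatorname{err}(q_{k-1})>\Delta_{k-1}/m_{k-1}$. On the other hand $\operatorname{err}(q_k)<28m_k\Delta_{k-1}^{3/2}$, which is smaller by \eqref{eq:return-fixed-comparison}. Hence $q_k>q_{k-1}$.

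Carrying the invariant $\ell_{k-1}<3^kq_{k-1}$ gives $L_k\ge2\cdot3^k(q_{k-1}+1)>2\ell_{k-1}$ and $\ell_k<3^{k+1}q_k$. So the gap exceeds the old core and the lengths at least triple. This is what makes $\sum_k\rho_k$ geometric and $\ell_{k-1}\rho_k$ uniformly bounded.

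The multiplier costs only $|1-\omega^{m_kq_k}|\le m_k\operatorname{err}(q_k)$, that is, a factor $9^k$ in $\Delta_k<28\cdot9^k\Delta_{k-1}^{5/2}$. The doubly exponential decay absorbs it.

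\textbf{A smaller slip.} $\Delta_{k+1}\le5\pi\Delta_k^{5/2}$ does not yield $\Delta_{k+1}\le\Delta_k^{5/2}$: a constant cannot be absorbed into the same exponent. You need the summation over $\log(1/\Delta_k)$ as in the paper, which loses a constant relative to $16(5/2)^k$.

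Once separation is secured, the rest of your outline matches the paper's argument: the margin bookkeeping, the five error terms in \eqref{eq:mirror-action}, $E_{k+1}\le2E_k+O(1)$, and $\mathcal V_k\le C_{\mathcal V}(\ell_k+1)$.
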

\begin{proof}
For $k\ge1$ set
\begin{equation}\label{eq:return-choice}
 m_k=3^k,
 \qquad
 R_k=\left\lfloor\frac{\Delta_{k-1}^{-3/2}}{2m_k}\right\rfloor,
 \qquad
 q_k=q(R_k),
 \qquad
 L_k=2m_kq_k.
\end{equation}
The $k$th operation will be the reciprocal mirror with delay $L_k$.  For the comparison with the preparatory step put $m_0=1$, $\Delta_{-1}=\Delta_{\mathrm{seed}}$, and regard $q_0$ as fixed above.

For the return estimates, assume inductively that $\Delta_{k-1}\le e^{-16\cdot2^{k-1}}$.  The crude fixed bounds $\log28<4$ and $2\log3<3$ give
\begin{equation}\label{eq:return-fixed-comparison}
 28\cdot9^k\sqrt{\Delta_{k-1}}<1.
\end{equation}
Indeed, the logarithm of the left side is at most
$4+3k-8\cdot2^{k-1}<0$.  The quantity inside the floor in \eqref{eq:return-choice} is therefore larger than two, and Lemma~\ref{lem:best-returns} gives
\[
 \operatorname{err}(q_k)
 \le\frac{2\pi}{R_k}
 \le8\pi m_k\Delta_{k-1}^{3/2}
 <28m_k\Delta_{k-1}^{3/2}.
\]
At the preceding step,
\[
 \Delta_{k-1}
 =\Delta_{k-2}|1-\omega^{m_{k-1}q_{k-1}}|
 \le m_{k-1}\Delta_{k-2}\operatorname{err}(q_{k-1}),
\]
with the same identity interpreted using the preparatory packet when $k=1$.  Since $\Delta_{k-2}<1$,
$\operatorname{err}(q_{k-1})>\Delta_{k-1}/m_{k-1}$.  On the other hand, \eqref{eq:return-fixed-comparison} and $m_km_{k-1}\le9^k$ imply
\[
 \operatorname{err}(q_k)
 <\frac{\Delta_{k-1}}{m_{k-1}}
 <\operatorname{err}(q_{k-1}).
\]
Hence $q_k>q_{k-1}$ by Lemma~\ref{lem:best-returns}.

The induction invariant $\ell_{k-1}<3^kq_{k-1}$ holds for $k=1$ by \eqref{eq:preparation-length}.  Since the best-return denominators increase,
\begin{equation}\label{eq:delay-separation}
 L_k>2\ell_{k-1},
 \qquad
 L_k\le\Delta_{k-1}^{-3/2},
 \qquad
 \ell_k=\ell_{k-1}+L_k<3^{k+1}q_k.
\end{equation}
Thus the delay separates the two copies of the old core.  Moreover,
\begin{equation}\label{eq:amplitude-recurrence}
 \begin{aligned}
 0<\Delta_k
 &=\Delta_{k-1}|1-\omega^{m_kq_k}|\\
 &\le m_k\Delta_{k-1}\operatorname{err}(q_k)
 <28\cdot9^k\Delta_{k-1}^{5/2}
 \le\Delta_{k-1}^2.
 \end{aligned}
\end{equation}
The last inequality is \eqref{eq:return-fixed-comparison}; positivity follows because $\omega$ is not a root of unity.  Consequently
$\Delta_k\le\Delta_0^{2^k}\le e^{-16\cdot2^k}$.

These estimates also keep the iterates uniformly away from the boundary. Let
\[
 g_k=L_k-\ell_{k-1}>\ell_{k-1},
 \qquad
 \rho_k=hr^{g_k+1}.
\]
Since \eqref{eq:delay-separation} gives $\ell_j>3\ell_{j-1}$,
\[
 \sum_{k=1}^\infty \rho_k
 \le\frac{hr^{\ell_0}}{1-r^{\ell_0}}<\frac\eta4.
\]
Also $\Delta_k\le\Delta_0^{2^k}\le\Delta_0^{2k}$, and hence
\[
 \sum_{k=1}^\infty \Delta_k
 \le\frac{\Delta_0^2}{1-\Delta_0^2}
 <\Delta_0<\frac\eta4.
\]
Starting from boundary distance $4\eta$, the cumulative loss on all old cores is therefore less than $\eta/2$.  The gap parameter is at most
$\Delta_{k-1}+2hr<3/100$, while the exterior real tail is at most $hr<1/100$; these regions stay much farther than $4\eta$ from the boundary.  Thus every application of Lemma~\ref{lem:mirror} is legitimate, the localized boundary distance remains larger than $3\eta$, the forcing remains nonzero and integral, and the real-tail coefficient does not increase.

For the amplitude and action estimates, write $x_k=\log(1/\Delta_k)$. Then \eqref{eq:amplitude-recurrence} gives
\[
 x_k\ge\frac52x_{k-1}-\log28-2k\log3.
\]
After division by $(5/2)^k$ and summation,
\[
 \frac{x_k}{(5/2)^k}
 \ge x_0-
 \sum_{j=1}^\infty \frac{\log28+2j\log3}{(5/2)^j}
 >x_0-6\ge10.
\]
Here we used
$\sum_{j=1}^{\infty}(2/5)^j=2/3$
and
$\sum_{j=1}^{\infty}j(2/5)^j=10/9$ together with the same fixed logarithmic bounds as above.  This proves the second estimate in \eqref{eq:iteration-amplitude}.

It remains to show that the action does not acquire a comparable growth rate.  In the $k$th mirror, put
$\ell=\ell_{k-1}$, $\Delta=\Delta_{k-1}$ and $g=g_k$.  From \eqref{eq:delay-separation} and \eqref{eq:amplitude-recurrence},
\[
 \ell\Delta_k<1,
 \qquad
 \Delta^2g\le\sqrt\Delta<1.
\]
Indeed $\ell<L_k/2\le\Delta^{-3/2}/2$, while
$\Delta_k<28\cdot9^k\Delta^{5/2}$ and
\eqref{eq:return-fixed-comparison} applies.  Put
\[
 R_r=\sum_{j=1}^\infty jr^j=\frac{r}{(1-r)^2},\qquad
 B_0=2\pi\cot(\pi\eta),\qquad
 A_0=2\pi^2\csc^2(\pi\eta).
\]
Since $g>\ell$,
\[
 \ell\rho_k\le hR_r,
 \qquad
 \ell\rho_k^2\le h^2R_r,
 \qquad
 gr^{g+1}\le R_r.
\]
The common boundary distance is at least $3\eta$, hence
$\mathcal V_{\mathrm{core}}\le B_0\ell$ and the core curvature
constant in Lemma~\ref{lem:mirror} is at most $A_0$.  Moreover the gap
parameter is at most $3/100$, so its curvature constant is bounded by a
fixed number $C_g$.  The five error terms in \eqref{eq:mirror-action}
are therefore bounded, respectively, by
\[
 B_0(1+2hR_r),\qquad
 2A_0(1+h^2R_r),\qquad
 C_gh^2R_r,\qquad
 C_g\frac{2hr}{1-r},
\]
and
\[
 C_g\left(\frac14+\frac{1}{2|1-\zeta^2|}\right).
\]
Here we used $\ell\Delta_k^2\le\ell\Delta_k<1$ and
$\Delta^2g<1$.  Their sum is a constant $C_*<\infty$ independent of
$k$, and hence
\[
 E_k\le2E_{k-1}+C_*,\qquad E_k\le C_E2^k
\]
for a fixed $C_E$.  Finally, the core contributes at most $B_0\ell_k$
to the full derivative sum.  On either real tail,
\eqref{eq:potential-tail} gives a uniformly summable geometric bound.
Thus $\mathcal V_k\le C_{\mathcal V}(\ell_k+1)$ for a fixed
$C_{\mathcal V}$.

Finally, $L_k>2\ell_{k-1}$ implies
$\ell_k<3L_k/2\le(3/2)\Delta_{k-1}^{-3/2}$.  Combining this with \eqref{eq:amplitude-recurrence} gives
\[
 \Delta_k\ell_k
 \le42\cdot9^k\Delta_{k-1}
 \longrightarrow0,
\]
because $\Delta_{k-1}\le e^{-16\cdot2^{k-1}}$.  This completes the proof.
\end{proof}

\begin{proof}[Proof of Theorem~\ref{thm:main}]
Fix either quartic parameter and translate the packets in Theorem~\ref{thm:controlled-iteration} so that their cores are $[0,\ell_k-1]$.  They satisfy Definition~\ref{def:packet} with the same $\eta$ and $h$, because their localized boundary distance is at least $3\eta>2\eta$.  Put
\[
 N_k=\lceil\Delta_k^{-1}\rceil.
\]
Then
\begin{equation}\label{eq:final-scale-limits}
 \frac{\ell_k}{N_k}\le\Delta_k\ell_k\longrightarrow0,
 \qquad
 N_k\Delta_k^2\le\Delta_k+\Delta_k^2\longrightarrow0.
\end{equation}
In particular $N_k>2\ell_k+4$ for all sufficiently large $k$.  For the geometric part of $u_{N_k}$ in \eqref{eq:perturbation-budget}, we then have
$d_{N_k}\ge N_k/4-1$.  Since $\Delta_k^{-1}\le N_k$,
\[
 0\le
 \frac{e_{N_k}}{\Delta_k}
 \le
 \frac{2hN_kr^{N_k/4-1}}{1-r^{N_k}}
 \longrightarrow0.
\]
Thus $u_{N_k}\le9\Delta_k$ eventually, and \eqref{eq:iteration-action} together with \eqref{eq:final-scale-limits} gives
\[
 u_{N_k}\mathcal V_k\longrightarrow0,
 \qquad
 C_\eta N_ku_{N_k}^2\longrightarrow0.
\]
In particular $u_{N_k}<\eta$ eventually.

For $N_k\ge22$ one has $b_{N_k}\ge N_k/22$.  Therefore
\[
 \begin{aligned}
 \Theta_{N_k}
 &\ge \log(1/\Delta_k)-\log22-C_E2^k-o(1)\\
 &\ge 10(5/2)^k-C_E2^k-\log22-o(1)
 \longrightarrow+\infty.
 \end{aligned}
\]
The growth $(5/2)^k$ therefore defeats the doubling of the action.  For some finite $k$ all the hypotheses of Proposition~\ref{prop:packet-amplification} hold with $\Theta_{N_k}>1$.  Equation~\eqref{eq:amplifier-energy} then gives
$D_2(\nu_r)\le1-1/(8N_k)<1$.  The argument applies to both $P_1$ and $P_2$, proving the theorem.
\end{proof}

The three estimates in Theorem~\ref{thm:controlled-iteration} have distinct roles.  The logarithmic decay of $\Delta_k$ provides the placement gain, the $O(2^k)$ action bound controls its analytic cost, and $\Delta_k\ell_k\to0$ prevents the small neutral error from accumulating over a growing core.  The increasing best-return denominators are also essential: they ensure that each new delay separates the two copies of the previous core.  No bounded-type assumption or quantitative irrationality exponent is imposed on the angle of $\zeta$.  The argument uses one neutral conjugate pair and makes no claim for Salem numbers of higher degree.

\begin{corollary}\label{cor:no-L2}
Neither $\nu_{r_1}$ nor $\nu_{r_2}$ has an $L^2(\RR)$ density.
\end{corollary}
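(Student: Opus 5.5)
The plan is to argue by contradiction from Theorem~\ref{thm:main}, using the standard implication that an $L^2$ density forces finite energy $I_s(\mu)$ for every $s<1$. Fix $i\in\{1,2\}$ and suppose $\nu_{r_i}=f\,dx$ with $f\in L^2(\RR)$. Since $\nu_{r_i}$ is a probability measure supported in the compact interval $J=[-1/(1-r_i),1/(1-r_i)]$, we have $f\in L^1\cap L^2$ with $\supp f\subset J$.

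The main step is to show $I_s(\nu_{r_i})<\infty$ for every $0<s<1$. Write
\[
 I_s(\nu_{r_i})=\iint f(x)f(y)|x-y|^{-s}\,dx\,dy=\int f\,(f*k_s)\,dx,
 \qquad k_s(t)=|t|^{-s}\mathbf 1_{\{|t|\le |J|\}}.
\]
The truncation is harmless because $x,y\in J$ forces $|x-y|\le |J|$. Since $s<1$, $k_s\in L^1(\RR)$, so Young's inequality gives $\|f*k_s\|_{2}\le\|k_s\|_1\|f\|_2$. Then Cauchy--Schwarz yields $I_s\le\|k_s\|_1\|f\|_2^2<\infty$. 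By the definition of $D_2$, this gives $D_2(\nu_{r_i})=1$. (The introduction already records this fact; the argument above is the short verification.)

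This contradicts Theorem~\ref{thm:main}, which gives $D_2(\nu_{\beta_i^{-1}})<1$ for both quartic Salem numbers $\beta_1,\beta_2$ from Proposition~\ref{prop:quartic-class}. Alternatively, Proposition~\ref{prop:packet-amplification} directly supplies an exponent $s=1-1/(8N_k)<1$ with $I_s(\nu_{r_i})=\infty$, which contradicts finiteness without passing through the supremum in the definition of $D_2$.

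There is no real obstacle here. The only point needing care is to truncate the Riesz kernel to make it integrable, and this uses compactness of the support.
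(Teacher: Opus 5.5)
Your proposal is correct and follows the paper's route: an $L^2$ density forces $I_s<\infty$ for every $0<s<1$, hence $D_2=1$, which contradicts Theorem~\ref{thm:main}. The only difference is how finite energy is checked. The paper uses the small-ball bound $\PP(|X-X'|\le t)\le 2t\|f\|_2^2$ followed by a shell decomposition, while you use Young's inequality with the truncated Riesz kernel and then Cauchy--Schwarz; both verifications are equally valid.
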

\begin{proof}
If $f\in L^2(\RR)$ is a compactly supported probability density and $X,X'$ are independent with density $f$, then
\[
 \PP(|X-X'|\le t)
 =\int_{-t}^t(f*\widetilde f)(u)\,du
 \le2t\|f\|_2^2,
 \qquad \widetilde f(u)=f(-u).
\]
A dyadic, or equivalently geometric, shell decomposition then gives finite $s$-energy for every $0<s<1$, so $D_2=1$.  This contradicts Theorem~\ref{thm:main}.
\end{proof}

\appendix

\section{Explicit upper bounds}\label{app:upper}

The explicit upper bounds proved in this appendix are obtained by making the seed-and-return construction used in the proof of Theorem~\ref{thm:main} quantitative. We fix the seed and return choices and apply Proposition~\ref{prop:packet-amplification} with explicit parameters. The interval comparisons given below use outward-rounded rational bounds; Appendix~\ref{app:verification} provides the corresponding arithmetic details.

For later use we record an explicit version of the constant in Proposition~\ref{prop:packet-amplification}. As observed in its proof, the torus-box construction is valid for any fixed
\[
0<\rho_{\rm box}\le \eta/64.
\]
For $\eta=3/100$, we take
\[
\rho_{\rm box}=10^{-5}.
\]
If $u=\beta+r$ and $v=\zeta+\bar\zeta$, the real Vandermonde determinant in \eqref{eq:mode-matrix} satisfies
\[
 |\det R_P|=(\beta-r)\sin\varphi\,(u-v)^2,
 \qquad \zeta=e^{i\varphi}.
\]
Here, \eqref{eq:root-bounds} gives $\beta-r>16/15$, \eqref{eq:unit-circle-bounds} gives $\sin\varphi>3/4$, and \eqref{eq:trace-roots} gives $(u-v)^2\ge8$. Thus $|\det R_P|>1$. Hence, by \eqref{eq:torus-box-lower},
\begin{equation}
\label{eq:explicit-amplifier-constant}
c(P,3/100)=\beta|\det R_P|(2\rho_{\rm box})^4e^{-1}
>(2\cdot10^{-5})^4e^{-1}>e^{-50}.
\end{equation}
No other part of the amplification argument is changed.

\begin{theorem}\label{thm:explicit-upper}
For $r_i=\beta_i^{-1}~(i = 1,2)$, one has
\begin{align}
 I_{1-10^{-65}}(\nu_{r_1})&=\infty,
 &D_2(\nu_{r_1})&<1-10^{-65},\label{eq:explicit-upper-one}\\
 I_{1-10^{-75}}(\nu_{r_2})&=\infty,
 &D_2(\nu_{r_2})&<1-10^{-75}.\label{eq:explicit-upper-two}
\end{align}
More precisely, there are integers $n_1,n_2$ such that
\begin{align}
 5\le n_1<2\cdot10^{62},\qquad
 &\log\bigl(\beta_1^{n_1}Q^{(1)}_{n_1}\bigr)>21,\label{eq:explicit-collision-one}\\
 5\le n_2<2\cdot10^{72},\qquad
 &\log\bigl(\beta_2^{n_2}Q^{(2)}_{n_2}\bigr)>2371.\label{eq:explicit-collision-two}
\end{align}
\end{theorem}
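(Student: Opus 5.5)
The plan is to make the qualitative argument of Section~\ref{sec:returns} explicit, using parameter-specific seeds and a small, fixed number of mirror steps. The final application of Proposition~\ref{prop:packet-amplification} uses $\eta=3/100$, and \eqref{eq:explicit-amplifier-constant} gives $\log c(P,\eta)>-50$. For each $i$ I will construct one explicit centered reciprocal packet and one integer $N_i$.

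The packet starts from a seed Laurent polynomial $K$ satisfying \eqref{eq:reciprocal-polynomial}. I would choose $K$ so that $|K(\zeta_i)|$ is very small while the core stays short. Powers of $U_i$ multiplied by short reciprocal correction factors are natural candidates. After the seed I apply Lemma~\ref{lem:mirror} a few times, with even delays $L=2q$, where $q$ is taken from the continued-fraction best returns of $\omega=\zeta_i^2$ and is computed with certified interval arithmetic. Each step requires checking three things with outward-rounded rational bounds:
\begin{itemize}
\item the hypotheses $\Delta^++\rho_L\le\vartheta/2$, $U<1/2$ and $\Delta^++h_0r<1/2$;
\item the margin \eqref{eq:mirror-margin};
\item the action recursion \eqref{eq:mirror-action}.
\end{itemize}
This yields certified values of $\ell$, $\Delta=\Delta^+$, $h$, the boundary distance, $E$ and $\mathcal V$ for the final packet. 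The seed core coordinates must be evaluated exactly in $\ZZ[\beta]$ in order to confirm the centering and to obtain $\vartheta$. After that, only the recursive bounds are needed.

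Next I choose $N_i$ of order $\Delta^{-1}$ with $N_i>2\ell+4$. For example, $N_1\approx10^{61}$ would give $n<2N_1M\le 2\cdot 10^{62}$ for a modest $M$. I then evaluate \eqref{eq:placement-blocks}--\eqref{eq:amplifier-gain}. The term $e_N$ is doubly exponentially small, so $u_N\approx 8\Delta$. I must also check that $u_N<\eta$ and that $\Theta_{N_i}=\log b_{N_i}-C_{N_i}$ is comfortably larger than $1$.

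With a concrete $M$, \eqref{eq:packet-amplification} should then give
\[
M\Theta_N-\log b_N-5\log M-5\log N-4\log 2-50>21 \quad(\text{resp. } 2371),
\]
together with $n<2NM$. This establishes \eqref{eq:explicit-collision-one} and \eqref{eq:explicit-collision-two}.

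For the energy statements, put $s=1-10^{-65}$. Then
\[
(1-s)n\log\beta<2\cdot10^{-3},
\]
so $\log(\beta^{sn}Q_n)>21-10^{-2}>0$. Corollary~\ref{cor:finite-collision} then gives $I_s=\infty$ and $D_2\le-\log Q_n/(n\log\beta)<s$. The second parameter is handled the same way with $10^{-75}$ and $n_2<2\cdot10^{72}$.

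The main obstacle is the parameter search together with the certification. The estimate $\Theta_N>1$ needs $E$ to be much smaller than $\log(1/\Delta)\approx 140$. Each mirror step doubles $E$ and adds the curvature terms, which are of order $A_0\approx 2\pi^2/(\pi\eta)^2$ when the margin is only $3\eta$. So the seed must already have very small action, and only about two or three mirrors are affordable. This explains the modest size of the gains, $21$ and $2371$. I would first try to optimize the seed so that $E_{\rm seed}$ is as small as possible, with the first returns very sharp, before doing any mirroring. The rigorous evaluation of the high-precision quantities $|1-\omega^q|$ and the core values I would delegate to the exact-arithmetic verification in Appendix~\ref{app:verification}.
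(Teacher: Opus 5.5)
Your architecture matches the paper's: explicit seeds, a few delayed differences at certified returns, then Proposition~\ref{prop:packet-amplification} with $\eta=3/100$ and $\log c(P,3/100)>-50$, and finally Corollary~\ref{cor:finite-collision}. Your closing energy step is exactly the paper's. The gap is in the one quantitative decision you actually make. You tie the placement scale to the amplitude, $N\approx\Delta^{-1}$, and so conclude that you need $E\ll\log(1/\Delta)$. That is the calibration of Corollary~\ref{cor:packet-criterion}, and it pays off only asymptotically.

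In Proposition~\ref{prop:packet-amplification} the steering bound $|\psi_j|\le8$ is uniform in $N$. Hence $u_N=8\Delta+e_N$ does not grow with $N$. The only cost that does grow is $C_\eta Nu_N^2\approx64C_\eta N\Delta^2$, with $C_\eta\approx2.2\cdot10^3$. This stays negligible for all $N\ll\Delta^{-2}$, so the gain $\log b_N$ per copy can be pushed to nearly $2\log(1/\Delta)$.

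The paper needs exactly this. Its certified packets have $E<105$, $\Delta<6\cdot10^{-37}$ for $P_1$, and $E<100$, $\Delta<3.2\cdot10^{-44}$ for $P_2$. Here $\log(1/\Delta)$ is only about $83$ and $100$, so with $N\approx\Delta^{-1}$ these bounds give no positive lower bound on $\Theta_N$. Taking instead $N_1=10^{60}$ and $N_2=10^{70}$ gives $\Theta_{N_1}>10$ and $\Theta_{N_2}>35$ (Table~\ref{tab:explicit-amplifier}). With $M=100$, $b_N<N=10^k$ and $\log10<5/2$, the bound \eqref{eq:packet-amplification} becomes $1000-900-25-4-50=21$ and $3500-1050-25-4-50=2371$. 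Meanwhile $n<2NM$ gives $2\cdot10^{62}$ and $2\cdot10^{72}$. This is where the stated constants come from.

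Your own example exhibits the problem. With $N_1\approx10^{61}$, the requirement $n<2N_1M\le2\cdot10^{62}$ forces $M\le10$. Then \eqref{eq:packet-amplification} reaches $21$ only if $10\Theta_N$ exceeds about $925$. Since $\Theta_N\le\log b_N-E\approx138-E$, you would need $E\lesssim45$ for a packet with $\Delta\approx10^{-61}$. This is not plausible for two reasons:
\begin{itemize}
\item Each mirror places two near-copies of the old core in the new one, so $E$ roughly doubles.
\item The gap cost, about $\pi^2\Delta^2L/2$, forces $L\lesssim\Delta^{-2}$. So one mirror multiplies $\log(1/\Delta)$ by roughly three at best, barring an exceptional partial quotient.
\end{itemize}
From seeds of the kind used in the paper, reaching $\Delta\approx10^{-61}$ therefore takes at least three mirrors, that is, eight near-copies of the seed core. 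The paper's three-mirror packet for $P_1$ reaches only $\Delta<6\cdot10^{-37}$, at the price $E<105$.

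Beyond this, the proposal supplies none of the explicit data on which the specific constants depend. The paper provides these in Lemma~\ref{lem:upper-data-one} and Propositions~\ref{prop:explicit-packet-one}--\ref{prop:explicit-packet-two}. For $P_2$ the paper also replaces Lemma~\ref{lem:mirror} by Lemma~\ref{lem:delayed-difference}. It uses a Green's-function seed of length $2885$ with real tails below $10^{-24}$, and odd delays. Your restriction to even mirror delays at returns of $\zeta^2$ is a further difference, though not by itself an error.
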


The proof will be completed after constructing explicit packets for the two parameters.

\subsection{An explicit packet for the first quartic parameter}
\label{app:upper-first}

Throughout this subsection $P=P_1$, $\beta=\beta_1$, and $r=\beta^{-1}$.
We first choose an explicit reciprocal seed. Let $u_1,v_1$ be as in
\eqref{eq:trace-roots}, and put
\[
 \alpha_*=u_1+1,\qquad
 \alpha_*'=v_1+1=-\alpha_*^{-1}.
\]
Since $\alpha_*^3=13+10u_1$, define
\begin{equation*}
 K_0(X)=10X^2+13X+10.
\end{equation*}
Then
\[
 K_0(\beta)=\beta\alpha_*^3,
 \qquad
 K_0(\zeta)=\zeta(\alpha_*')^3.
\]
For a root $\lambda$ of $P$, write
$C_\lambda=P'(\lambda)^{-1}$ and define
\begin{equation*}
 \xi_i^{(0)}=-C_\beta\beta^i\alpha_*^3,
 \qquad
 \chi_i^{(0)}
 =2\Re\bigl(C_\zeta\zeta^i(\alpha_*')^3\bigr).
\end{equation*}
The reciprocity of $K_0$ gives reflection index $2$, hence
\[
 \xi_i^{(0)}
 \equiv\xi_{2-i}^{(0)}+\chi_i^{(0)}
 \pmod{\ZZ}.
\]
Set $Y_i^{(0)}=\can(\xi_i^{(0)})$ and localize by
\begin{equation*}
 H_i^{(0)}=
 \begin{cases}
  Y_i^{(0)},&i\le14,\\
  \xi_{2-i}^{(0)},&i\ge15.
 \end{cases}
\end{equation*}
Writing
\[
 P_1(X)
 =X^2\bigl((X+X^{-1})^2-(X+X^{-1})-3\bigr)
\]
gives
\[
 |P_1'(\zeta)|=\sqrt{13}\sqrt{u_1}.
\]
Thus the neutral amplitude is
\begin{equation*}
 \Delta_0
 :=2|C_\zeta K_0(\zeta)|
 =\frac{2\alpha_*^{-3}}{\sqrt{13}\sqrt{u_1}}.
\end{equation*}

\begin{lemma}\label{lem:upper-data-one}
Let
\[
 \kappa=P(\Shift)Y^{(0)},\qquad
 E_0=\sum_{i\in\ZZ}V(H_i^{(0)}).
\]
Then $\kappa$ is nonzero, integer-valued, and supported on $[-12,14]$,
with
\begin{equation*}
 \kappa_{-8}=1,\qquad
 \kappa_{-5}=-1,\qquad
 \kappa_3=-1,\qquad
 \kappa_6=1,
\end{equation*}
and all other entries zero. Moreover,
\begin{equation*}
 E_0<\frac{61}{5},
 \qquad
 \frac12-|H_i^{(0)}|>\frac{39}{500}
 \quad(i\in\ZZ),
\end{equation*}
and
\begin{equation*}
 0<\Delta_0<\frac{203}{20000},
 \qquad
 h_0:=|\xi_{-12}^{(0)}|<\frac1{100}.
\end{equation*}
Outside the core,
\begin{equation*}
 |H_i^{(0)}|
 \le h_0r^{\dist(i,[-12,14])}
 <\frac1{100}r^{\dist(i,[-12,14])}.
\end{equation*}
Finally, the even integers
\begin{equation}\label{eq:upper-one-delays}
 L_1=124,\qquad
 L_2=489637838,\qquad
 L_3=313586165619659333154062
\end{equation}
satisfy
\begin{equation}\label{eq:upper-one-returns}
 |1-\zeta^{L_1}|<\frac{2507}{10^6},\qquad
 |1-\zeta^{L_2}|<\frac{3736}{10^{12}},\qquad
 |1-\zeta^{L_3}|<\frac{5873}{10^{27}}.
\end{equation}
\end{lemma}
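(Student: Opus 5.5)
This lemma is a finite verification, so the plan is to reduce every claim to exact arithmetic in $\QQ(\sqrt{13})$ and in the quartic field, and then certify it with outward-rounded interval arithmetic. The steps below follow the order of the statement.

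\textbf{Closed forms.} I first make all quantities explicit algebraic expressions.
\begin{itemize}
\item Since $\alpha_*=u_1+1$ satisfies $\alpha_*\alpha_*'=-1$, we have $|\alpha_*'|^3=\alpha_*^{-3}$. This gives the stated closed form for $\Delta_0$.
\item Rational enclosures of $\sqrt{13}$ then give $0<\Delta_0<203/20000$.
\item The coordinates $\xi_i^{(0)}=-C_\beta\beta^i\alpha_*^3$ lie in $\QQ(\beta)$. Here $C_\beta=1/P_1'(\beta)$, and $P_1'(\beta)=(\beta-\beta^{-1})\beta\,(2u_1-1)$ comes from differentiating the trace factorization.
\end{itemize}

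\textbf{Values on the core.} I would not rely on floating point for $\xi_i$. Lemma~\ref{lem:trace-identity} gives the exact identity
\[
\xi_i^{(0)}+\xi_{2-i}^{(0)}+\chi_i^{(0)}=\mathcal S_i\in\ZZ,
\qquad \mathcal S_i=-\sum_\lambda C_\lambda\lambda^iK_0(\lambda).
\]
The sequence $\mathcal S_i$ is integral and satisfies the recurrence for $P_1$, so it can be computed exactly in both directions from four initial values. For the finitely many $i\in[-12,14]$ I would proceed as follows.
\begin{itemize}
\item Enclose $\xi_i^{(0)}$ in a rational interval of width much smaller than $10^{-3}$.
\item Check that this interval avoids $\ZZ+1/2$ with room to spare, so $Y_i^{(0)}=\can(\xi_i^{(0)})$ is determined.
\item Read off the margins $\tfrac12-|Y_i^{(0)}|>39/500$.
\end{itemize}

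\textbf{Tails.} For $i<-12$ we have $Y_i=\xi_i$, since $|\xi_i|\le h_0r^{-12-i}$ and $h_0=|\xi_{-12}|<1/100$ by a single interval check. For $i\ge15$ the localized value is $\xi_{2-i}$, whose modulus is $h_0r^{i-14}$ because $2-i\le-13$. This gives the tail bound and also the margin outside the core. The assignment $Y_i=\xi_{2-i}+\chi_i$ is canonical there because $h_0r+\Delta_0<1/2$.

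\textbf{Forcing.} The forcing $\kappa=P(\Shift)Y^{(0)}$ is then computed exactly as an integer sequence. Writing $Y_i=\xi_i-n_i$ with integer centering shifts $n_i$, the expanding mode is annihilated, so $\kappa=-P(\Shift)n$. The shifts are known from the enclosures, with $n_i=0$ for $i<-12$ and $n_i=\mathcal S_i$ for $i\ge15$. I would then:
\begin{itemize}
\item verify that $\kappa$ vanishes outside $[-12,14]$ by the homogeneous-tail argument of Lemma~\ref{lem:algebraic-seed};
\item list the nonzero entries $\kappa_{-8}=\kappa_6=1$ and $\kappa_{-5}=\kappa_3=-1$.
\end{itemize}

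\textbf{Action.} I would sum $V(H_i)$ on the core with interval bounds. The two tails I would bound by \eqref{eq:potential-tail} as geometric series of size at most about $C_h h_0^2r^2/(1-r^2)$. The total is then compared with $61/5$.

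\textbf{Returns (the hard step).} I expect the main obstacle to be the three return estimates \eqref{eq:upper-one-returns}, especially for $L_3\approx3\cdot10^{23}$. Write $\zeta=e^{i\varphi}$ with $\cos\varphi=v_1/2$. Then
\[
|1-\zeta^L|=2|\sin(L\varphi/2)|=2\pi\,\|L\varphi/(2\pi)\|_{\RR/\ZZ}\cdot(1+O(\text{small})),
\]
so each claim reduces to bounding the distance from $L\varphi/(2\pi)$ to the nearest integer. I would do this in three steps.
\begin{itemize}
\item Compute $\varphi=\arccos\bigl((1-\sqrt{13})/4\bigr)$ to roughly $60$ certified digits, starting from a rational enclosure of $\sqrt{13}$ and using a verified arccos series.
\item Multiply by $L$ in exact rational arithmetic and subtract the nearest integer $p$.
\item Bound $2|\sin(\theta/2)|\le|\theta|$ for the residual angle $\theta=L\varphi-2\pi p$.
\end{itemize}
Precision loss scales like $L$, so about $30$ guard digits beyond $10^{-27}$ suffice. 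I would first cross-check with high-precision floating point, confirming that the $L_k$ come from continued-fraction convergents of $\varphi/\pi$ as in Lemma~\ref{lem:best-returns}. Evenness of the $L_k$ is immediate. All these checks are finite and are deferred to Appendix~\ref{app:verification}.
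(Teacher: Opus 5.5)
Your plan matches the paper's. The lemma is a purely finite statement, and the paper establishes it only through the outward-rounded rational/interval computations of the verification appendix: exact centering, exact forcing, certified enclosures of $V$, and certified return bounds.

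The one methodological difference is in the return estimates.
\begin{itemize}
\item The paper evaluates $\zeta^{L}$ directly by repeated squaring of a rectangular interval enclosure of $\zeta=\tfrac{1-\sqrt{13}}{4}+\tfrac{i}{2}\sqrt{u_1}$. This needs only square roots, with no $\pi$ and no inverse trigonometric functions.
\item You certify $\varphi$ and $\pi$ to about $60$ digits and reduce $L\varphi$ modulo $2\pi$. This requires certified transcendental evaluation, but it produces the nearest integer $p$ explicitly, which is convenient for your continued-fraction cross-check. (That cross-check is not needed for the proof.)
\end{itemize}
Both methods lose about $\log_{10}L$ digits, so either works at the precision you indicate.

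Two corrections are needed.

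\textbf{The integer identity is wrong.} With $\sigma=1$ and $j_*=2$, Lemma~\ref{lem:trace-identity} gives
\[
\xi_i^{(0)}-\xi_{2-i}^{(0)}-\chi_i^{(0)}=-\sum_{\lambda}C_\lambda\lambda^{i-1}K_0(\lambda)\in\ZZ .
\]
Your version $\xi_i^{(0)}+\xi_{2-i}^{(0)}+\chi_i^{(0)}\in\ZZ$ fails already at $i=1$: there $\chi_1^{(0)}=0$, while $2\xi_1^{(0)}\approx-17.51$ is not an integer. Your $\mathcal S_i$, built with $\lambda^{i}$ instead of $\lambda^{i-1}$, is also off by one: it equals the shift $n_{i+1}$, not $n_i$. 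If you set $n_i=\mathcal S_i$ for $i\ge15$ as written, the forcing computed at the right end of the core would be wrong, and you would not recover $\kappa$ as listed. With the corrected formula, $n_i=-(10\mathcal T_{i+1}+13\mathcal T_i+10\mathcal T_{i-1})$, where $\mathcal T_m=\sum_\lambda\lambda^m/P'(\lambda)$ is the integral sequence from Lemma~\ref{lem:trace-identity}. For example, $n_7=-228$.

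\textbf{The enclosure width you propose is too coarse.} The margin claim is very tight. Since $Y_{-4}^{(0)}\approx0.4219581$, the quantity $\tfrac12-|Y_{-4}^{(0)}|$ exceeds $39/500$ by only about $4.2\cdot10^{-5}$. Similarly, $\Delta_0\approx0.0101460$ lies within about $4\cdot10^{-6}$ of $203/20000$. Enclosures that are merely ``much smaller than $10^{-3}$'' therefore do not certify these claims. Your interval widths must be well below $4\cdot10^{-5}$ on the core, and below $10^{-6}$ for $\Delta_0$.
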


The finite assertions in Lemma~\ref{lem:upper-data-one} are verified by
the exact rational and interval calculations described in
Appendix~\ref{app:verification}.

\begin{proposition}\label{prop:explicit-packet-one}
For $P_1$, there is a packet, in the sense of Definition~\ref{def:packet}, with
$\eta=3/100$, $h=1$, and
\begin{equation*}
 \begin{gathered}
 \ell=313586165619659822792051<4\cdot10^{23},\\
 E<105,\qquad
 \mathcal V<1.4\cdot10^{25},\qquad
 0<\Delta<6\cdot10^{-37}.
 \end{gathered}
\end{equation*}
The associated forcing is nonzero, integer-valued, and supported on the core,
and the localized sequence satisfies
\[
 |H_i|<\frac12-\frac3{50}
 \qquad(i\in\ZZ).
\]
\end{proposition}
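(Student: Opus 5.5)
Starting from the explicit seed packet of Lemma~\ref{lem:upper-data-one} (core $[-12,14]$, so $\ell^{(0)}=27$), we apply the reciprocal mirror of Lemma~\ref{lem:mirror} three times, with the even delays $L_1,L_2,L_3$ of \eqref{eq:upper-one-delays}. The core lengths add up as
\[
\ell=27+L_1+L_2+L_3=313586165619659822792051,
\]
which matches the stated $\ell$ exactly. This confirms that the intended packet is $K^{(3)}=(1-X^{-L_3})(1-X^{-L_2})(1-X^{-L_1})K_0$, with its core translated to $[0,\ell-1]$. Each delay satisfies $L_k>\ell^{(k-1)}$: $124>27$, $L_2>151$, and $L_3>L_2+151$. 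The neutral amplitude is the exact product
\[
\Delta=\Delta_0\prod_{k=1}^3|1-\zeta^{L_k}|<\frac{203}{20000}\cdot2507\cdot10^{-6}\cdot3736\cdot10^{-12}\cdot5873\cdot10^{-27}\approx 5.6\cdot10^{-37},
\]
which gives $0<\Delta<6\cdot10^{-37}$. It is nonzero because $\zeta$ is not a root of unity. Lemma~\ref{lem:mirror} also gives nonzero integral forcing supported on the core at every stage, and real-tail coefficients $h_0^{(k)}\le h_0<1/100\le h=1$.

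For the boundary margin, we start from $\vartheta=39/500$ and track \eqref{eq:mirror-margin} through the three steps. At step $k$ the loss on the old cores is at most $\Delta^{(k)}+\rho_{L_k}$, where $\rho_{L_k}=h_0r^{L_k-\ell^{(k-1)}+1}$. The first step is the only non-negligible one. There $\Delta^{(1)}<2.6\cdot10^{-5}$, and $\rho_{L_1}<10^{-2}r^{98}$, which is astronomically small. The total loss is therefore below $10^{-4}$, leaving margin above $0.078-10^{-4}>3/50$. The other two entries of the minimum, $1/2-U$ and $1/2-h_0r$, are near $1/2$, since $U\le\Delta_0+2h_0r<0.03$. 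The hypotheses $\Delta^++\rho_L\le\vartheta/2$, $U<1/2$ and $\Delta^++h_0r<1/2$ are immediate. This yields $|H_i|<1/2-3/50$, which is stronger than the $1/2-2\eta=1/2-6/100$ needed for Definition~\ref{def:packet}. It also supplies the tail condition with $h=1$.

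The action is the main bookkeeping step. We apply \eqref{eq:mirror-action} iteratively, with $C_{\rm core}=2\pi^2\csc^2(\pi\vartheta/2)$ evaluated at $\vartheta\ge 0.078$, and $\mathcal V_{\rm core}\le 2\pi\cot(\pi\cdot 0.078)\,\ell^{(k-1)}$. Starting from $E_0<61/5$, doubling three times gives $8E_0<97.6$, so the error terms must total less than about $7$. The dangerous terms are the gap quadratic $C_U\Delta_{k-1}^2\ell_{\rm gap}/4$ and the core terms $\mathcal V_{\rm core}\Delta^{+}$ and $C_{\rm core}\ell(\Delta^+)^2$.
\begin{itemize}
\item Step~1: $\Delta_0^2L_1\approx10^{-2}$, and with $C_U\approx 2\pi^2$ this costs a few tenths.
\item Step~2: $\Delta_1^2L_2\approx(2.5\cdot10^{-5})^2\cdot4.9\cdot10^8\approx 0.3$, costing up to roughly $1.5$ after $C_U$ and the factor $1/4$.
\item Step~3: $\Delta_2^2L_3\approx(10^{-16})^2\cdot3\cdot10^{23}$, which is tiny.
\end{itemize}
The core terms are all small because each $\ell^{(k-1)}\Delta^{(k)}$ is small. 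The products $h_0^2\ell_{\rm gap}r^{\ell_{\rm gap}+1}$ and $2\Delta h_0 r/(1-r)$ are negligible. Each doubling multiplies the earlier errors by $2$, and the total should stay under $105-97.6$. I expect this to be the tight step, requiring careful outward-rounded constants, with the gap quadratic at step~2 as the likely bottleneck.

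For the derivative sum, the core contributes at most $2\pi\cot(3\pi/50)\,\ell\approx 33\ell<1.4\cdot10^{25}$ when $\ell<4\cdot10^{23}$. The tails add a bounded geometric amount. Finally, all finite numerical inequalities are certified by the interval arithmetic of Appendix~\ref{app:verification}.
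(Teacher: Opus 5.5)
Your proposal is correct and follows the paper's proof. The paper starts from the seed of Lemma~\ref{lem:upper-data-one}, applies Lemma~\ref{lem:mirror} three times with the even delays $L_1,L_2,L_3$, and certifies every hypothesis and bound by the outward-rounded arithmetic of Appendix~\ref{app:verification}; your estimates (boundary distance above $0.0779$, a step-two gap term of about $1.5$, so $E<105$) agree with the paper's figures. One small arithmetic slip: $\Delta_2<2.55\cdot10^{-5}\cdot3.736\cdot10^{-9}\approx9.5\cdot10^{-14}$, not $10^{-16}$, but the step-three gap cost $C_U\Delta_2^2\ell_{\rm gap}/4$ is then only about $10^{-2}$, so your conclusion is unaffected.
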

\begin{proof}
Starting from the seed in Lemma~\ref{lem:upper-data-one},
apply Lemma~\ref{lem:mirror} successively with the three delays in
\eqref{eq:upper-one-delays}. The exact rational comparisons described in
Appendix~\ref{app:verification} verify all hypotheses of the three
applications and give
\[
 E<105,\qquad
 \mathcal V<1.4\cdot10^{25},\qquad
 0<\Delta<6\cdot10^{-37},
\]
with localized boundary distance greater than $779/10000>3/50$.
The resulting reciprocal polynomial is
\[
 K_3(X)=(10X^2+13X+10)\prod_{j=1}^3(1-X^{-L_j}),
\]
and its core length is
\[
 27+L_1+L_2+L_3
 =313586165619659822792051
 <4\cdot10^{23}.
\]
Lemma~\ref{lem:mirror} preserves nonzero integral core-supported forcing
and does not increase the real-tail coefficient beyond $1/64$.
After translating the core to $[0,\ell-1]$, we therefore obtain the
claimed packet; the coarser choice $h=1$ is admissible.
\end{proof}

\subsection{An explicit packet for the second quartic parameter}
\label{app:upper-second}

Now $P=P_2$, $\beta=\beta_2$, and $r=\beta^{-1}$. For a root $\lambda$,
put $C_\lambda=P'(\lambda)^{-1}$, and let
$e^{(0)}=(e_i^{(0)})_{i\in\ZZ}$ be defined by
\[
 e_i^{(0)}=
 \begin{cases}
  1,&i=0,\\
  0,&i\ne0.
 \end{cases}
\]
We begin by constructing an explicit finite seed.

\begin{lemma}\label{lem:green-sequence}
The real sequence
\begin{equation*}
 g_i=
 \begin{cases}
  -C_\beta\beta^{i-1},&i\le0,\\
  C_rr^{i-1}+2\Re(C_\zeta\zeta^{i-1}),&i\ge1
 \end{cases}
\end{equation*}
satisfies
\[
 P(\Shift)g=e^{(0)}.
\]
\end{lemma}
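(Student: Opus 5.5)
The plan is to verify the identity $P(\Shift)g=e^{(0)}$ coordinatewise, using two facts about the residues $C_\lambda=P'(\lambda)^{-1}$. First, each geometric mode $\lambda^{i-1}$ is annihilated by $P(\Shift)$. Second, the Lagrange interpolation identities $\mathcal T_m=\sum_{P(\lambda)=0}\lambda^m/P'(\lambda)$ satisfy $\mathcal T_0=\mathcal T_1=\mathcal T_2=0$ and $\mathcal T_3=1$, as established in the proof of Lemma~\ref{lem:trace-identity}. Write
\[
 w_i=\sum_{P(\lambda)=0}C_\lambda\lambda^{i-1}=\mathcal T_{i-1}.
\]
Then $w$ is a homogeneous solution of $P(\Shift)w=0$, with $w_1=w_2=w_3=0$ and $w_4=1$. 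Since $P$ has real coefficients, $C_{\bar\zeta}=\overline{C_\zeta}$, so the neutral part of $w$ is $2\Re(C_\zeta\zeta^{i-1})$. Hence for $i\ge1$ we have $g_i=w_i+C_\beta\beta^{i-1}$; equivalently, $g_i$ is $w_i$ with the expanding mode removed.

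The next step is to rewrite $g$ as a difference of homogeneous solutions across the cut. Let $a_i=-C_\beta\beta^{i-1}$, which is homogeneous, and $b_i=w_i-C_\beta\beta^{i-1}=w_i+a_i$, also homogeneous. Then $g_i=a_i$ for $i\le0$ and $g_i=b_i$ for $i\ge1$, so
\[
 g_i=a_i+\mathbf 1_{\{i\ge1\}}w_i .
\]
Because $P(\Shift)a=0$, it suffices to compute $(P(\Shift)f)_j$ for $f_i=\mathbf 1_{\{i\ge1\}}w_i$.

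For $j\ge1$, all five indices $j,\dots,j+4$ are at least $1$, so $f$ agrees with $w$ there and the value is $0$. For $j\le-4$, all the indices are at most $0$, so the value is again $0$. For $-3\le j\le0$, only the terms $f_{j+k}$ with $j+k\ge1$ survive, and among $w_1,\dots,w_4$ only $w_4=1$ is nonzero. The index $4$ lies in the window $[j,j+4]$ only when $j=0$; for $j\in\{-3,-2,-1\}$ the surviving indices lie in $\{1,2,3\}$, where $w$ vanishes. At $j=0$ the single nonzero term is $w_4$, with coefficient $1$ (the leading coefficient of the monic $P$), which gives $(P(\Shift)f)_0=1$. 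This yields $P(\Shift)g=e^{(0)}$.

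There is no real obstacle in this argument. The only points needing care are the initial values $w_1=w_2=w_3=0$ and $w_4=1$, which follow from $\mathcal T_0=\mathcal T_1=\mathcal T_2=0$ and $\mathcal T_3=1$ via partial fractions of $X^m/P(X)$, and keeping the index conventions of \eqref{eq:recurrence} straight.
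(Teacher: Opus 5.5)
Your proof is correct and follows the paper's argument. Both branches are homogeneous, and at the junction indices $j=-3,\dots,0$ the identities $\mathcal T_0=\mathcal T_1=\mathcal T_2=0$ and $\mathcal T_3=1$ give the values $0,0,0,1$; your decomposition $g=a+\mathbf 1_{\{i\ge1\}}w$ just makes that substitution explicit. There is one sign slip: for $i\ge1$ you should have $g_i=w_i-C_\beta\beta^{i-1}$, not $w_i+C_\beta\beta^{i-1}$, which is what your definition of $b_i$ correctly uses.
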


\begin{proof}
Both branches satisfy the homogeneous $P$ recurrence. Lagrange
interpolation gives
\[
 \sum_{P(\lambda)=0}\frac{\lambda^k}{P'(\lambda)}
 =0\quad(k=0,1,2),
 \qquad
 \sum_{P(\lambda)=0}\frac{\lambda^3}{P'(\lambda)}=1.
\]
Substitution at the four indices $-3,-2,-1,0$, where the recurrence
crosses the junction between the two branches, gives respectively
$0,0,0,1$. Hence $P(\Shift)g=e^{(0)}$.
\end{proof}

Define
\begin{equation*}
 \begin{aligned}
 K_{\rm base}(X)
 &=1-X+X^2-X^3+X^4-X^7+X^8-X^9+X^{10}-X^{11},\\
 K_{\rm seed}(X)
 &=K_{\rm base}(X)(1-X^{2673})
 =\sum_{j=0}^{2684} k_jX^j,
 \end{aligned}
\end{equation*}
and set
\begin{equation}\label{eq:upper-two-seed}
 Y_i^{(1)}=\sum_{j=0}^{2684} k_jg_{i-100-j},
 \qquad
 \ell_1=2885.
\end{equation}
By Lemma~\ref{lem:green-sequence}, the forcing is
$(k_{i-100})_i$, a nonzero integer sequence supported on
$[0,\ell_1-1]$. Outside the core,
\begin{equation}\label{eq:upper-two-tails}
 \begin{aligned}
 Y_i^{(1)}
 &=a_-^{(1)}\beta^i
 &&(i<0),\\
 Y_i^{(1)}
 &=2\Re(\gamma_1\zeta^i)
   +a_+^{(1)}r^{i-\ell_1}
 &&(i\ge\ell_1),
 \end{aligned}
\end{equation}
where
\begin{equation*}
 \begin{aligned}
 a_-^{(1)}
 &=-C_\beta r^{101}\sum_{j=0}^{2684} k_j r^j,\\
 a_+^{(1)}
 &=C_r\sum_{j=0}^{2684} k_j r^{\ell_1-101-j},\\
 \gamma_1
 &=C_\zeta\zeta^{-101}\sum_{j=0}^{2684} k_j\zeta^{-j}.
 \end{aligned}
\end{equation*}

\begin{lemma}\label{lem:upper-seed-two}
The seed in \eqref{eq:upper-two-seed} satisfies
\begin{equation*}
 |a_-^{(1)}|,|a_+^{(1)}|<10^{-24},
 \qquad
 0<\Delta_1:=2|\gamma_1|<\frac{47}{10^7},
\end{equation*}
and
\begin{equation*}
 \max_{0\le i<\ell_1}|Y_i^{(1)}|<0.394,
 \qquad
 E_1:=\sum_{i=0}^{\ell_1-1}V(Y_i^{(1)})<24.
\end{equation*}
\end{lemma}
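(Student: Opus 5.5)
The plan is to reduce every claim in Lemma~\ref{lem:upper-seed-two} to a finite, exactly verifiable computation in $\QQ(\beta_2)=\QQ(\sqrt2,\sqrt{\cdot})$, with the infinite tails handled in closed form through \eqref{eq:upper-two-tails}. For the coefficients, $a_-^{(1)}$ and $a_+^{(1)}$ are $C_\beta$ (resp.\ $C_r$) times $r^{101}K_{\rm seed}(r)$ (resp.\ $r^{\ell_1-101}K_{\rm seed}(\beta)$ after the substitution $j\mapsto$ reflection). Since $K_{\rm seed}=K_{\rm base}(1-X^{2673})$, I factor $|K_{\rm seed}(r)|\le2|K_{\rm base}(r)|$ and $|r^{\ell_1-101}K_{\rm seed}(\beta)|=r^{\ell_1-101}|K_{\rm base}(\beta)|\,|1-\beta^{2673}|\le r^{\ell_1-101-2673}\cdot|K_{\rm base}(\beta)|\cdot\beta^{0}$, i.e.\ $\lesssim r^{111}\beta^{11}\cdot$const. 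Each is thus bounded by $C\,r^{100}$ or so, and $r^{100}<(3/5)^{100}<10^{-22}$; with the explicit constants $|C_\beta|,|K_{\rm base}|$ evaluated in interval arithmetic this gives $10^{-24}$ (I expect that the actual bound needs $r_2\approx0.4806$, giving $r^{100}\approx10^{-32}$, comfortably).

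For the neutral amplitude, $\Delta_1=2|C_\zeta|\,|K_{\rm base}(\zeta^{-1})|\,|1-\zeta^{-2673}|$. The factor $|C_\zeta|=1/|P_2'(\zeta)|$ is explicit via $(u_2-v_2)\sqrt{4-v_2^2}$ as in Proposition~\ref{prop:quartic-class}; $K_{\rm base}(\zeta)$ should be small by design (the base word is chosen so that $K_{\rm base}(\zeta)$ nearly vanishes) and $|1-\zeta^{2673}|$ is a return error. Both are computed by interval evaluation of $\cos(j\varphi),\sin(j\varphi)$ with $\varphi$ enclosed from $v_2=1-\sqrt2$; since $2673\varphi$ needs $\varphi$ to about $10^{-10}$ relative accuracy, I would instead compute $\zeta^{2673}$ by exact repeated squaring in $\ZZ[\zeta]$ modulo $P_2$ and then evaluate numerically, avoiding error amplification. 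Positivity follows since $\zeta$ is not a root of unity and $K_{\rm base}(\beta)\ne0$ forces $K_{\rm base}(\zeta)\ne0$ by irreducibility.

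For the core, $Y_i^{(1)}$ on $0\le i<\ell_1$ is a finite sum of Green-function values from Lemma~\ref{lem:green-sequence}, so each $Y_i^{(1)}$ equals an element of $\QQ(\beta)$ expressible as $\sum_\lambda C_\lambda\lambda^{m}F_i(\lambda)$ with explicit integer polynomials; I compute them by running the integer recurrence $P(\Shift)Y=\kappa$ from the left tail $a_-^{(1)}\beta^i$ (exact in $\QQ(\beta)$) and enclose each value with rigorous interval bounds. Then $\max|Y_i|<0.394$ is checked directly and $E_1=\sum V(Y_i)$ is enclosed by summing interval bounds of $-\log\cos^2(\pi Y_i)$ over the 2885 sites. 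The main obstacle is numerical stability: forward recurrence amplifies errors by $\beta^{2885}$, so I would avoid it and instead evaluate each $Y_i$ from the closed form (expanding mode on the left part, stable plus neutral on the right, splitting the Green sum at $i$), where every term has modulus at most $1$ and the only delicate ingredient is enclosing $\zeta^{m}$ for $m\le 3000$, again done via exact powers in $\ZZ[\zeta]$. These verifications are the ones deferred to Appendix~\ref{app:verification}.
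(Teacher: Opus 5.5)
Your plan is essentially the paper's own argument: the paper establishes this lemma only by exact rational and outward-rounded interval computation (Appendix~\ref{app:verification}), with large powers of $\zeta$ obtained by repeated squaring and $V$ enclosed by explicit series, exactly the kind of finite closed-form verification you describe. Two small corrections, neither of which affects validity:
\begin{itemize}
\item $r_2\approx0.531$, not $0.4806$. So $|a_\pm^{(1)}|$ are of order $10^{-28}$ rather than $10^{-32}$, still far below $10^{-24}$.
\item Reducing $X^{m}$ exactly modulo $P_2$ does not avoid error amplification. The resulting integer coefficients have size about $\beta_2^{m}$ (roughly $10^{735}$ for $m=2673$) and cancel to a number of modulus one at $\zeta$. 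Direct interval repeated squaring of $\zeta$ on the unit circle, as the paper does, loses only a few digits.
\end{itemize}
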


The finite assertions in Lemma~\ref{lem:upper-seed-two} are verified by
the exact rational and interval calculations described in
Appendix~\ref{app:verification}.

The next step uses the same delayed-difference operation as
Lemma~\ref{lem:mirror}. For the explicit seed above, we use the following
quantitative form based on the tail decomposition
\eqref{eq:upper-two-tails}.
For $m\ge0$, set
\begin{equation*}
 a_r(m)=
 \left(\frac{1-r^{2m}}{1-r^2}\right)^{1/2},
 \qquad
 a_\zeta(m)=
 \left(
 \min\left\{
 m,\frac m2+\frac1{|1-\zeta^2|}
 \right\}
 \right)^{1/2}.
\end{equation*}

\begin{lemma}\label{lem:delayed-difference}
Let $Y$ be a canonical recurrence sequence with core $[0,\ell-1]$,
core action $E$, core maximum $a_*$, and tails
\[
 Y_i=a_-\beta^i\quad(i<0),\qquad
 Y_i=2\Re(\gamma\zeta^i)+a_+r^{i-\ell}\quad(i\ge\ell),
\]
where $|a_-|,|a_+|\le\tau$ and $\Delta=2|\gamma|>0$.
For $L\ge\ell$, put $\ell_{\rm gap}=L-\ell$ and
\[
 Y_i^+=Y_i-Y_{i-L}.
\]
Set
\begin{equation*}
\begin{aligned}
 \alpha_1
 &=\tau r^{\ell_{\rm gap}+1}a_r(\ell),\\
 \alpha_2
 &=\Delta a_\zeta(\ell)
   +\tau r^{\ell_{\rm gap}}a_r(\ell),\\
 \alpha_{\rm gap}
 &=\Delta a_\zeta(\ell_{\rm gap})
   +\tau(1+r)a_r(\ell_{\rm gap}),\\[2pt]
 m_c
 &=a_*+\Delta+\tau r^{\ell_{\rm gap}},
 \qquad
 m_{\rm gap}=2\Delta+2\tau.
\end{aligned}
\end{equation*}
If $m_c,m_{\rm gap}<1/2$, then $Y^+$ is canonical, with nonzero
integer-valued forcing supported on $[0,\ell+L-1]$, and its real-tail
coefficients remain bounded by $\tau$. Its neutral amplitude is
\begin{equation}\label{eq:delayed-amplitude}
 \Delta^+=\Delta|1-\zeta^{-L}|,
\end{equation}
while its core action satisfies
\begin{equation}\label{eq:delayed-action}
 E^+\le
 2E+2\sqrt{C(m_c)E}(\alpha_1+\alpha_2)
 +C(m_c)(\alpha_1^2+\alpha_2^2)
 +C(m_{\rm gap})\alpha_{\rm gap}^2,
\end{equation}
where $C(t)=\pi^2\sec^2(\pi t)$.
\end{lemma}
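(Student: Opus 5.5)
The plan is to write $Y^+$ region by region: left exterior, first core, gap, second core, right exterior. In each region I will identify $Y^+$ as an explicit small perturbation of a known canonical sequence.

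\textbf{Setup and exterior regions.} Since $Y$ has integral forcing $\kappa$ supported on $[0,\ell-1]$, the difference $Y_i-Y_{i-L}$ has forcing $\kappa_i-\kappa_{i-L}$. This is integral and supported in $[0,\ell+L-1]$, and it is nonzero because $L\ge\ell$ makes the two supports disjoint. Canonicity must be checked directly.
\begin{itemize}
\item For $i<0$: $Y^+_i=a_-(1-\beta^{-L})\beta^i$, with coefficient at most $\tau$.
\item For $i\ge\ell+L$: both terms lie in their outgoing tails, so
\[
Y^+_i=2\Re\bigl(\gamma(1-\zeta^{-L})\zeta^i\bigr)+a_+(r^{i-\ell}-r^{i-\ell-L}).
\]
This gives \eqref{eq:delayed-amplitude}.
\end{itemize}
The real coefficient on the right needs care: $|a_+|\,|1-r^{-L}|$ is large. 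So I will regroup it as $a_+r^{i-\ell-L}(r^L-1)$, measured from the new core end $\ell+L$. Its modulus is then at most $\tau$, as required.

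\textbf{Interior regions.}
\begin{itemize}
\item First core, $0\le i<\ell$: $Y^+_i=Y_i-a_-\beta^{i-L}$, and $|a_-\beta^{i-L}|\le\tau r^{L-i}$.
\item Gap, $\ell\le i<L$: $Y^+_i=2\Re(\gamma\zeta^i)+a_+r^{i-\ell}-a_-\beta^{i-L}$, bounded by $\Delta+2\tau\le m_{\rm gap}<1/2$.
\item Second core, $L\le i<L+\ell$: $Y^+_i=-Y_{i-L}+2\Re(\gamma\zeta^i)+a_+r^{i-\ell}$.
\end{itemize}
On both cores the modulus is at most $a_*+\Delta+\tau r^{\ell_{\rm gap}}=m_c<1/2$. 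In each region the expression is congruent to the true value of $Y^+$ and has modulus below $1/2$, so it is the canonical representative. Hence $Y^+$ is canonical, and it realizes the stated forcing.

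\textbf{Action estimate.} On the first core, write $Y^+=Y+w^{(1)}$ with $\|w^{(1)}\|_{\ell^2}$ bounded by a geometric sum. Summing $\tau^2r^{2(L-i)}$ over $i<\ell$ gives
\[
\|w^{(1)}\|_{\ell^2}\le\tau r^{\ell_{\rm gap}+1}a_r(\ell)=\alpha_1.
\]
On the second core, use evenness of $V$ to write $V(Y^+_i)=V(Y_{i-L}-w^{(2)}_i)$. The neutral part has $\ell^2$ norm at most $\Delta a_\zeta(\ell)$; this comes from $\sum\cos^2=\tfrac m2+\tfrac12\Re\sum\zeta^{2j}$ and the geometric bound at $\zeta^2$, or trivially $m$. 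Together with the real part this gives $\|w^{(2)}\|\le\alpha_2$.

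For the core contributions, apply Taylor's theorem on $[-m_c,m_c]$, where $V''\le 2C(m_c)$ and $|V'(t)|\le 2C(m_c)|t|$. This gives
\[
V(x+w)\le V(x)+|V'(x)||w|+C(m_c)w^2.
\]
Then use $V'(x)^2\le 4C(m_c)V(x)$, a consequence of $\tan^2\le\sec^2$ combined with $V\gtrsim$ the quadratic. With Cauchy--Schwarz this yields
\[
\sum|V'||w|\le 2\sqrt{C(m_c)E}\,\|w\|.
\]

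On the gap, where the base sequence is zero, use $V(t)\le C(m_{\rm gap})t^2$ directly. For the $\ell^2$ norm of the gap, split the triangle inequality over the neutral part and the two real parts. The two real geometric sums are bounded by $\tau a_r(\ell_{\rm gap})$ and $\tau r\,a_r(\ell_{\rm gap})$, which gives $\alpha_{\rm gap}$.

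\textbf{Expected main obstacle.} The delicate step is the inequality $|V'(x)|^2\le 4C\,V(x)$ on $[-m_c,m_c]$, or a replacement for it. The core action is counted only over the core, so the tails are excluded. I would first try the inequality $\pi^2\tan^2(\pi x)\le C\,(-\log\cos^2\pi x)$, checking it via the ratio's monotonicity. If that fails, I would fall back to the mean-value form $V'(x)^2\le 2\sup V''\cdot V(x)$, valid for nonnegative functions with minimum $0$. This fallback gives exactly the constant $2\cdot 2C(m_c)$.
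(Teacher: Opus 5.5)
Your proposal is correct and follows essentially the same route as the paper: the first-core, gap and second-core decomposition with $\ell^2$ bounds $\alpha_1,\alpha_{\rm gap},\alpha_2$, the pointwise bounds $m_c,m_{\rm gap}$ for canonicity, and Taylor's theorem plus Cauchy--Schwarz via $|V'|^2\le 4C(t)V$ on $[-t,t]$, which does hold because $\tan^2(\pi u)\le\sec^2(\pi t)\sin^2(\pi u)$ and $\sin^2(\pi u)\le-\log\cos^2(\pi u)$. The one check you leave implicit is canonicity on the two exterior regions, where $|Y^+_i|\le 2\Delta+\tau$ on the right and $|Y^+_i|\le\tau r$ on the left; both are covered by $m_{\rm gap}<1/2$.
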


\begin{proof}
The proof follows the same core--gap decomposition as
Lemma~\ref{lem:mirror}. We record only the estimates that differ.
The forcing is the old forcing minus its translate by $L$; since the
two supports are disjoint, it remains nonzero and integer-valued.
The real-tail coefficients are multiplied by factors of modulus at most
one, while the neutral coefficient is multiplied by $1-\zeta^{-L}$,
giving \eqref{eq:delayed-amplitude}.

On the first core, the gap, and the translated core, the perturbations
have $\ell^2$ norms bounded respectively by
$\alpha_1,\alpha_{\rm gap},\alpha_2$. In the neutral direction this uses
\[
 \sum_{j=0}^{m-1}\cos^2(\theta+j\varphi)
 \le
 \min\left\{m,\frac m2+\frac1{|1-\zeta^2|}\right\}.
\]
The corresponding pointwise bounds are $m_c$ and $m_{\rm gap}$, so
$Y^+$ remains canonical. Finally,
\[
 V''\le2C(t),\qquad |V'(u)|^2\le4C(t)V(u)
 \quad\text{on }[-t,t],
\]
and Taylor's theorem together with Cauchy--Schwarz gives
\eqref{eq:delayed-action}.
\end{proof}

\begin{proposition}\label{prop:explicit-packet-two}
For $P_2$, there exists a packet, in the sense of
Definition~\ref{def:packet}, with parameters
$\eta=3/100$ and $h=10^{-23}$, satisfying
\begin{equation*}
 \begin{gathered}
 \ell=34250431223905123167647850997<10^{29},\\
 E<100,\qquad
 \mathcal V<10^{18},\qquad
 0<\Delta<3.2\cdot10^{-44}.
 \end{gathered}
\end{equation*}
The associated forcing is nonzero, integer-valued, and supported on the
core, and the localized sequence satisfies
\[
 |H_i|<\frac25
 \qquad(i\in\ZZ).
\]
\end{proposition}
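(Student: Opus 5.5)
The plan mirrors the proof of Proposition~\ref{prop:explicit-packet-one}. We start from the explicit seed $Y^{(1)}$ of \eqref{eq:upper-two-seed}, whose data are fixed by Lemma~\ref{lem:upper-seed-two}. We then apply the delayed-difference operation of Lemma~\ref{lem:delayed-difference} a small number of times (two or three). Each delay $L_j$ is an explicit integer with $L_j\ge\ell$ at that stage, chosen from the continued-fraction denominators of the rotation angle of $\zeta$, so that $|1-\zeta^{-L_j}|$ is tiny. The resulting reciprocal-type polynomial is
\[
K_{\rm seed}(X)\prod_j(1-X^{-L_j}),
\]
and the core length is $\ell_1+\sum_jL_j$. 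The $L_j$ are to be chosen so that this equals the displayed value $34250431223905123167647850997$. The forcing is nonzero, integral and core-supported at every stage by Lemma~\ref{lem:delayed-difference}. The tails are controlled throughout: $|a_\pm|\le\tau=10^{-24}$ is preserved, which supplies the tail bound with $h=10^{-23}$ after translating the core to $[0,\ell-1]$.

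The quantitative bookkeeping runs stage by stage, as follows.
\begin{enumerate}
\item The amplitude is multiplied by the certified bound on $|1-\zeta^{-L_j}|$. Starting from $\Delta_1<47\cdot10^{-7}$, the product of these factors must fall below $3.2\cdot10^{-44}$.
\item Since the tails are extraordinarily small, $\tau r^{\ell_{\rm gap}}$ is negligible. The dominant perturbations are therefore $\alpha_2\approx\Delta a_\zeta(\ell)\approx\Delta\sqrt{\ell/2}$ and $\alpha_{\rm gap}\approx\Delta\sqrt{\ell_{\rm gap}/2}$.
\item We need $\Delta^2 L_{j}$ small at each stage. This is the same condition $L\lesssim\Delta^{-3/2}$ as in Theorem~\ref{thm:controlled-iteration}, and it constrains the choice of returns.
\item The action recursion \eqref{eq:delayed-action} then gives $E^+\approx 2E+O(\sqrt{E}\,\alpha_2)+O(\alpha^2)$. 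Starting from $E_1<24$, a careful count of doublings must keep the total below $100$. This suggests only two mirror steps after the seed, with the seed's built-in difference $1-X^{2673}$ already accounting for the first.
\item The pointwise maxima satisfy $m_c\le a_*+\Delta+\tau r^{\ell_{\rm gap}}$ and $m_{\rm gap}=2\Delta+2\tau$. These remain well below $1/2$, so canonicity holds. Starting from $a_*<0.394$ and adding the cumulative amplitudes (at most $\sim5\cdot10^{-6}$), we keep $|H_i|<2/5$, which gives $\eta=3/100$ with room to spare.
\end{enumerate}

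The derivative sum is bounded by $\mathcal V\le 2\pi\tan(\pi\cdot 2/5)\,\ell+O(1)$. This is about $19.4\ell$, which exceeds $10^{18}$ when $\ell\approx3\cdot10^{28}$. So $\mathcal V$ must instead be estimated via Cauchy--Schwarz, $\mathcal V\le\sqrt{4C\,E\,\ell'}$, where $\ell'$ counts sites with non-negligible $H_i$, or via the fact that gap sites carry only $O(\Delta)$ values and contribute $O(\Delta\cdot\text{gap})$. The latter is the route I would take. On the gaps, $|V'(H_i)|\le 2C|H_i|\lesssim\Delta_{\rm prev}$, so the total is $O(2^{k}\ell_1)+\sum\Delta_{\rm prev}L_j$, comfortably below $10^{18}$.

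The main obstacle is the certified arithmetic. We need rigorous enclosures of $|1-\zeta^{L_j}|$ for huge $L_j$, which means computing $L_j\varphi\bmod2\pi$ to roughly $60$ digits with outward rounding. Each stage's inequalities must then be propagated in exact rationals. As in Proposition~\ref{prop:explicit-packet-one}, I would defer these finite verifications to Appendix~\ref{app:verification} and present the proof as the chain of three (or so) applications of Lemma~\ref{lem:delayed-difference} with stated outward-rounded bounds at each stage.
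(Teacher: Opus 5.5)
Your plan is essentially the paper's proof: the paper starts from the seed of Lemma~\ref{lem:upper-seed-two} and applies Lemma~\ref{lem:delayed-difference} exactly twice with $\tau=10^{-24}$, using the delays $\widetilde L_1=1846082221$ and $\widetilde L_2=34250431223905123165801765891$. This gives $\ell_2=1846085106$, $E_2<49$, $\Delta_2<1.112\cdot10^{-15}$, and then core maximum $<0.395$, $E_3<99$, $\Delta_3<3.2\cdot10^{-44}$, with all return and interval bounds deferred to Appendix~\ref{app:verification}; this matches your count of two steps after the seed.

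The one place you diverge is $\mathcal V$, where no gap-by-gap bookkeeping is needed. Cauchy--Schwarz over the \emph{whole} core, with $|V'(t)|\le 500|t|$ and $V(t)\ge\pi^2t^2$ on $[-2/5,2/5]$, already gives $\mathcal V\le(500/\pi)\sqrt{\ell E_3}+10^{-20}r/(1-r)<10^{18}$. Your profile-based bound is also valid and sharper, but it needs site-by-site information that Lemma~\ref{lem:delayed-difference} does not record.
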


\begin{proof}
Use the two delays
\begin{equation*}
 \widetilde L_1=1846082221,
 \qquad
 \widetilde L_2=34250431223905123165801765891.
\end{equation*}
Repeated squaring with outward rectangular intervals gives
\begin{equation}\label{eq:upper-two-returns}
 |1-\zeta^{\widetilde L_1}|<\frac{2365}{10^{13}},
 \qquad
 |1-\zeta^{\widetilde L_2}|<\frac{2876}{10^{32}}.
\end{equation}
Define
\[
 Y_i^{(2)}=Y_i^{(1)}-Y_{i-\widetilde L_1}^{(1)},
 \qquad
 \ell_2=\ell_1+\widetilde L_1,
\]
and
\[
 Y_i^{(3)}=Y_i^{(2)}-Y_{i-\widetilde L_2}^{(2)},
 \qquad
 \ell=\ell_2+\widetilde L_2.
\]
Using the seed bounds in Lemma~\ref{lem:upper-seed-two},
apply Lemma~\ref{lem:delayed-difference} twice with $\tau=10^{-24}$.
The exact rational and interval comparisons described in
Appendix~\ref{app:verification} verify the hypotheses of both applications
and give
\[
 \ell_2=1846085106,\qquad
 E_2<49,\qquad
 \Delta_2<1.112\cdot10^{-15},
\]
and
\[
 \max_{0\le i<\ell}|Y_i^{(3)}|<0.395,
 \qquad
 E_3<99,
 \qquad
 0<\Delta_3<3.2\cdot10^{-44}.
\]
The two real-tail coefficients remain below $10^{-24}$.

Removing the outgoing neutral term from the right tail gives the
localized sequence $H$. Since $r>1/2$,
\[
 |H_i|
 \le
 10^{-23}r^{\dist(i,[0,\ell-1])}
 \qquad(i\notin[0,\ell-1]).
\]
Together with the core bound above, this gives
\[
 |H_i|<\frac25
 \qquad(i\in\ZZ).
\]
By \eqref{eq:potential-tail}, the two localized tails contribute less
than one to the action, and hence $E<100$. Moreover, on
$[-2/5,2/5]$,
\[
 |V'(t)|\le500|t|,
 \qquad
 V(t)\ge\pi^2t^2.
\]
Thus Cauchy--Schwarz, followed by the geometric tail bound, gives
\[
 \mathcal V
 \le
 \frac{500}{\pi}\sqrt{\ell E_3}
 +\frac{1000\cdot10^{-23}r}{1-r}
 <10^{18}.
\]
Finally,
\[
 \ell
 =34250431223905123167647850997
 <10^{29}.
\]
Lemma~\ref{lem:delayed-difference} preserves the nonzero
integer-valued forcing and its support on the core, completing the
construction.
\end{proof}

\subsection{Completion of the upper-bound proof}
\label{app:upper-completion}

Set
\begin{equation*}
 N_1=10^{60},\qquad N_2=10^{70},\qquad M=100.
\end{equation*}
Propositions~\ref{prop:explicit-packet-one} and
\ref{prop:explicit-packet-two}, together with
\eqref{eq:placement-blocks}--\eqref{eq:amplifier-gain}, give the following
bounds.

\begin{table}[htbp]
 \caption{Explicit inputs to Proposition~\ref{prop:packet-amplification},
 with $\eta=3/100$.}
 \label{tab:explicit-amplifier}
 \centering
 \begin{tabular}{lcc}
 \toprule
 Quantity&$P_1$&$P_2$\\
 \midrule
 $u_{N_i}$&$<5\cdot10^{-36}$&$<2.6\cdot10^{-43}$\\
 $C_{N_i}$&$<105.001$&$<100.001$\\
 $b_{N_i}$&$>10^{58}$&$>10^{68}$\\
 $\Theta_{N_i}$&$>10$&$>35$\\
 \bottomrule
 \end{tabular}
\end{table}

The strict scalar comparisons in Table~\ref{tab:explicit-amplifier} are
verified by the exact calculations described in
Appendix~\ref{app:verification}.

\begin{proof}[Proof of Theorem~\ref{thm:explicit-upper}]
Apply Proposition~\ref{prop:packet-amplification} with $M=100$ and the
constant in \eqref{eq:explicit-amplifier-constant}. If $N=10^k$, then
$b_N<N$, and hence
\[
 \log(\beta^nQ_n)
 >M\Theta_N-6k\log10-5\log100-4\log2-50.
\]
Using Table~\ref{tab:explicit-amplifier}, together with
$2<\log10<5/2$ and $1/2<\log2<1$, gives integers $n_1,n_2$ such that
\[
 5\le n_1<2\cdot10^{62},
 \qquad
 \log\bigl(\beta_1^{n_1}Q_{n_1}^{(1)}\bigr)>21,
\]
and
\[
 5\le n_2<2\cdot10^{72},
 \qquad
 \log\bigl(\beta_2^{n_2}Q_{n_2}^{(2)}\bigr)>2371.
\]
This proves
\eqref{eq:explicit-collision-one}--\eqref{eq:explicit-collision-two}.

Set
\[
 s_1^+=1-10^{-65},
 \qquad
 s_2^+=1-10^{-75}.
\]
Since $\beta_i<2$, $\log2<1$, and the bounds on $n_i$ above hold,
\[
 (1-s_i^+)n_i\log\beta_i<\frac1{500}
 \qquad(i=1,2).
\]
Therefore
\[
 \log\bigl(\beta_i^{s_i^+n_i}Q_{n_i}^{(i)}\bigr)>0
 \qquad(i=1,2).
\]
Corollary~\ref{cor:finite-collision} gives
\[
 I_{s_i^+}(\nu_{r_i})=\infty
\]
and
\[
 D_2(\nu_{r_i})
 \le-\frac{\log Q_{n_i}^{(i)}}{n_i\log\beta_i}
 <s_i^+.
\]
Thus \eqref{eq:explicit-upper-one}--\eqref{eq:explicit-upper-two} follow.
\end{proof}

\section{A finite criterion for lower bounds}\label{app:lower-criterion}

The lower bounds use the same collision probabilities $Q_n$ as the upper
bounds, but require an upper bound for the complete return series
\[
 G(t)=\sum_{n=0}^\infty t^nQ_n.
\]
We reduce this to a supersolution inequality on a finite state set. The
explicit supersolutions are given in Appendix~\ref{app:lower}, and their
exact verification is described in Appendix~\ref{app:verification}.

Let
\[
 P(X)=X^4-AX^3+BX^2-AX+1
\]
be one of the two quartics. On $\ZZ^4$, define the companion map
\begin{equation}\label{eq:affine-companion-map}
 \mathsf M_P(x_0,x_1,x_2,x_3)
 =\bigl(x_1,x_2,x_3,-x_0+Ax_1-Bx_2+Ax_3\bigr),
\end{equation}
and put $e=(0,0,0,1)$. Consider the affine chain
\begin{equation*}
 Z_{n+1}=\mathsf M_PZ_n+U_ne,
 \qquad Z_0=0,
\end{equation*}
where the digits have the law in \eqref{eq:lazy-return}.
This is an
integral-coordinate realization of the same difference process.

For $y\in\ZZ^4$, let $\PP_y$ denote the law of the same chain with
initial state $Z_0=y$.

For every root $z$ of $P$, define
\begin{equation*}
 \mathcal L_z(x)
 =(z^3-Az^2+Bz-A)x_0+(z^2-Az+B)x_1+(z-A)x_2+x_3.
\end{equation*}
Reduction modulo $P$ gives
\begin{equation}\label{eq:eigen-coordinate-dynamics}
 \mathcal L_z(\mathsf M_Px+ue)=z\mathcal L_z(x)+u,
 \qquad u\in\ZZ.
\end{equation}
For $z=\beta$, the map $\mathcal L_\beta:\ZZ^4\to\RR$ is injective,
since no nonzero integer polynomial of degree at most three can vanish
at the degree-four number $\beta$.

\begin{lemma}\label{lem:endpoint-domination}
For every $n\ge0$,
\begin{equation}\label{eq:complete-return-identity}
 \PP_0(Z_n=0)=Q_n.
\end{equation}
Moreover, for every $m\ge0$ and every $x,y\in\ZZ^4$,
\begin{equation*}
 \PP_0(Z_m=x)\le Q_m,
 \qquad
 \PP_y(Z_m=0)\le Q_m.
\end{equation*}
\end{lemma}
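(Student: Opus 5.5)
The plan is to read everything off the eigen-coordinate $\mathcal L_\beta$, which is injective on $\ZZ^4$ and conjugates the chain to the scalar recursion \eqref{eq:eigen-coordinate-dynamics}.

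\emph{Step 1 (the identity \eqref{eq:complete-return-identity}).} Iterating \eqref{eq:eigen-coordinate-dynamics} from $Z_0=0$ gives
\[
 \mathcal L_\beta(Z_n)=\sum_{j=0}^{n-1}U_j\beta^{\,n-1-j}.
\]
By injectivity of $\mathcal L_\beta$, the event $Z_n=0$ coincides with $\sum_j U_j\beta^{n-1-j}=0$. Reversing the indices and using that the $U_j$ are i.i.d.\ turns this into the event in \eqref{eq:lazy-return}. Hence $\PP_0(Z_n=0)=Q_n$.

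\emph{Step 2 (first inequality).} Let $U'$ be an independent copy of the digit sequence. The law of $U_j$ is that of $(\varepsilon_j-\varepsilon_j')/2$ for independent signs, and it is symmetric. Write $S=\sum_j U_j\beta^{m-1-j}$, so that $\mathcal L_\beta(Z_m)=S$. By injectivity,
\[
 \PP_0(Z_m=x)=\PP\bigl(S=\mathcal L_\beta(x)\bigr).
\]
Now $S$ is a sum of independent symmetric terms, and its Fourier transform $\prod_j\cos^2(\pi\theta\beta^{m-1-j})$ is nonnegative. The cleanest route avoids Fourier analysis. Write $S=W-W'$ with
\[
 W=\tfrac12\sum_j\varepsilon_j\beta^{m-1-j},
\]
and $W'$ an independent copy of $W$. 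Then
\[
 \PP(W-W'=c)=\sum_w p(w)p(w-c)\le\sum_w p(w)^2
\]
by Cauchy--Schwarz. After multiplying by $r^{m-1}$ and reversing the digits, $\sum_w p(w)^2$ is exactly $\sum_x p_m(x)^2=Q_m$ by \eqref{eq:collision-squares}. This gives $\PP_0(Z_m=x)\le Q_m$.

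\emph{Step 3 (second inequality).} Starting from $y$, iteration gives
\[
 \mathcal L_\beta(Z_m)=\beta^m\mathcal L_\beta(y)+S.
\]
Hence $\PP_y(Z_m=0)=\PP(S=-\beta^m\mathcal L_\beta(y))$. This is again a point mass of $W-W'$, so the same Cauchy--Schwarz bound applies. Alternatively, one can note that $Z_m$ under $\PP_y$ equals $\mathsf M_P^m y$ plus the $\PP_0$-chain, and then invoke Step 2 with $x=-\mathsf M_P^m y$.

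\emph{Main obstacle.} The only delicate point is bookkeeping: checking that the digit reversal and the factor $\beta^{m-1}$ identify $\sum_w p(w)^2$ precisely with $Q_m$ rather than a rescaled variant. Since both rescaling and reversal are bijections on the atoms, they preserve the collection of atom masses, and this settles it. The injectivity of $\mathcal L_\beta$, which comes from the irreducibility of $P$ as already noted in the paper, is what makes events for the chain coincide exactly with events for the real sums.
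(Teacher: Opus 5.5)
Your proof is correct. Step 1 matches the paper, but for the two inequalities you take a different route.

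The paper stays on the lattice. It computes the characteristic function of the endpoint law $\mu_m$ on $\ZZ^4$ as $\prod_j\bigl(\tfrac12+\tfrac12\cos(2\pi\langle\mathsf M_P^je,\theta\rangle)\bigr)\ge0$. Fourier inversion then gives $\mu_m(x)\le\int_{\TT^4}\widehat\mu_m=\mu_m(0)=Q_m$, and a general starting point $y$ is handled by translating by $\mathsf M_P^my$.

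You instead move everything to the real line through the injective coordinate $\mathcal L_\beta$. You realize the lazy digits as $(\varepsilon_j-\varepsilon_j')/2$ and bound the point masses of $W-W'$ by $\sum_w p(w)^2$ using Cauchy--Schwarz.

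The two arguments are Fourier duals: nonnegativity of $\cos^2$ corresponds to writing the step law as $\nu*\widetilde\nu$. Your version is fully elementary and identifies the maximal point mass directly with the collision sum \eqref{eq:collision-squares}. The paper's version uses injectivity of $\mathcal L_\beta$ only to identify $\mu_m(0)$ with $Q_m$, and it would work for any step law with nonnegative characteristic function.

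One bookkeeping point: since $2r^{m-1}W=\sum_j\varepsilon_jr^j=X_m$ exactly, Step 2 needs no digit reversal, and the scaling factor is $2r^{m-1}$ rather than $r^{m-1}$. As you note, any bijection of atoms preserves the sum of squared masses, so this does not affect the argument.
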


\begin{proof}
Iterating \eqref{eq:eigen-coordinate-dynamics} gives
\[
 \mathcal L_\beta(Z_n)
 =\sum_{j=0}^{n-1}U_j\beta^{n-1-j}.
\]
Injectivity of $\mathcal L_\beta$ and reversal of the independent digits
give \eqref{eq:complete-return-identity}.

Let $\mu_m$ be the endpoint law from the origin. Its characteristic
function is
\[
 \widehat\mu_m(\theta)
 =\prod_{j=0}^{m-1}
 \left(
 \frac12+\frac12
 \cos\bigl(2\pi\langle\mathsf M_P^je,\theta\rangle\bigr)
 \right),
\]
which is nonnegative. Hence Fourier inversion gives
\[
 \mu_m(x)
 \le\int_{\TT^4}\widehat\mu_m(\theta)\,d\theta
 =\mu_m(0)=Q_m.
\]
Starting from $y$ translates the endpoint distribution by
$\mathsf M_P^my$, proving the second inequality.
\end{proof}

The expanding coordinate separates states from which return is
impossible. Set
\begin{equation*}
 d(x)=(\beta-1)\mathcal L_\beta(x),
 \qquad
 \Omega=\{x\in\ZZ^4:|d(x)|<1\}.
\end{equation*}
If $|d(x)|\ge1$ and $|u|\le1$, then
\[
 |d(\mathsf M_Px+ue)|
 \ge\beta|d(x)|-(\beta-1)\ge1.
\]
Thus $\ZZ^4\setminus\Omega$ is forward invariant, and no state outside
$\Omega$ can return to the origin.

Fix $J\ge1$ and put
\begin{equation*}
 \Omega_J
 =\{x\in\Omega:0<\|x\|_\infty\le J\}.
\end{equation*}
Starting from the origin, let $\tau_J$ be the first positive time at
which the chain returns to $0$ or leaves $\Omega_J$. A return to $0$ is
kept, as is an exit from the cube that remains in $\Omega$; an exit from
$\Omega$ is discarded. Define
\begin{equation*}
 \pi_k^{(J)}
 =\PP_0\bigl(\tau_J=k,\ \text{the first stop is kept}\bigr),
 \qquad
 \Phi_J(t)=\sum_{k=1}^{\infty}\pi_k^{(J)}t^k.
\end{equation*}

\begin{proposition}\label{prop:first-stop}
For every $n\ge1$,
\begin{equation}\label{eq:first-stop-domination}
 Q_n\le\sum_{k=1}^n\pi_k^{(J)}Q_{n-k}.
\end{equation}
Consequently, if $t>0$ and
\begin{equation*}
 \Phi_J(t)\le c
\end{equation*}
for some $0\le c<1$, then
\begin{equation}\label{eq:return-series-from-first-stop}
 G(t)\le\frac1{1-c}.
\end{equation}

\end{proposition}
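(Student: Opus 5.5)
We prove \eqref{eq:first-stop-domination} by a first-stop decomposition of the event $\{Z_n=0\}$ under $\PP_0$, using the strong Markov property at $\tau_J$ and the endpoint domination in Lemma~\ref{lem:endpoint-domination}. By \eqref{eq:complete-return-identity}, $Q_n=\PP_0(Z_n=0)$. On $\{Z_n=0\}$ with $n\ge1$ we first check that $\tau_J\le n$. If $\tau_J>n$, then $Z_1,\dots,Z_n$ all lie in $\Omega_J$, which excludes $0$, contradicting $Z_n=0$. Hence
\[
 Q_n=\sum_{k=1}^n\PP_0(\tau_J=k,\ Z_n=0).
\]

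Next we discard the exits that cannot contribute. If the first stop at time $k$ is an exit from $\Omega$, then $Z_k\notin\Omega$. Forward invariance of $\ZZ^4\setminus\Omega$, established just before the statement, then forbids $Z_n=0$. So only kept stops contribute. For a kept stop, the event $\{\tau_J=k,\ \text{kept}\}$ is determined by $U_0,\dots,U_{k-1}$. Conditioning on $Z_k=y$ and using independence of the later digits gives
\[
 \PP_0(\tau_J=k,\ \text{kept},\ Z_n=0)=\sum_y\PP_0(\tau_J=k,\ \text{kept},\ Z_k=y)\,\PP_y(Z_{n-k}=0).
\]
Lemma~\ref{lem:endpoint-domination} bounds $\PP_y(Z_{n-k}=0)\le Q_{n-k}$, including the case $y=0$, where it is an equality. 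Summing over $y$ and then over $k$ yields \eqref{eq:first-stop-domination}.

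For \eqref{eq:return-series-from-first-stop}, we work with truncated series, since finiteness of $G(t)$ is not known a priori. Put $G_N(t)=\sum_{n=0}^N t^nQ_n$. Multiply \eqref{eq:first-stop-domination} by $t^n$ and sum over $1\le n\le N$. Since every term is nonnegative, this gives
\[
 G_N(t)\le 1+\sum_{k=1}^N\pi_k^{(J)}t^k\sum_{m=0}^{N-k}t^mQ_m\le 1+\Phi_J(t)\,G_N(t)\le 1+cG_N(t).
\]
Because $G_N(t)$ is finite, we may rearrange to get $G_N(t)\le 1/(1-c)$. Letting $N\to\infty$ proves \eqref{eq:return-series-from-first-stop}.

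The only delicate point is the Markov step. One must check that the event $\{\tau_J=k,\ \text{kept}\}$ is measurable with respect to the first $k$ digits, and that the time-homogeneous affine chain restarted from $y$ has the law $\PP_y$. Both follow directly from the definition $Z_{n+1}=\mathsf M_PZ_n+U_ne$ with i.i.d.\ digits. The truncation step then avoids any circularity in the final inequality.
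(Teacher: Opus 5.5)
Your proposal is correct and follows the paper's proof: a first-stop decomposition at $\tau_J$, exits from $\Omega$ discarded by forward invariance, the Markov property at time $k$, the bound $\PP_y(Z_{n-k}=0)\le Q_{n-k}$ from Lemma~\ref{lem:endpoint-domination}, and then truncated series $G_N(t)$. The only cosmetic difference is at the end: you close with $G_N(t)\le 1+cG_N(t)$ and rearrange, while the paper uses $G_N(t)\le 1+cG_{N-1}(t)$ and induction; both are valid.
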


\begin{proof}
A path returning to the origin at time $n$ must have a kept first stop
at some time $k\le n$. Conditional on its endpoint, the probability of
returning in the remaining $n-k$ steps is at most $Q_{n-k}$ by
Lemma~\ref{lem:endpoint-domination}. This gives
\eqref{eq:first-stop-domination}.

For
\[
 G_N(t)=\sum_{n=0}^Nt^nQ_n,
\]
multiplication by $t^n$ and summation up to $N$ give
\[
 G_N(t)
 \le1+\sum_{k=1}^N\pi_k^{(J)}t^kG_{N-k}(t)
 \le1+cG_{N-1}(t).
\]
Induction yields $G_N(t)\le(1-c)^{-1}$, and letting $N\to\infty$ proves
\eqref{eq:return-series-from-first-stop}.
\end{proof}

We now bound the first-stop transform by finitely many inequalities.
Let $\mathcal F\subset\Omega_J$ be finite, symmetric under
$x\mapsto-x$, with $e,-e\in\mathcal F$, and assume that
\[
 x\in\mathcal F,\quad
 u\in\{-1,0,1\},\quad
 \mathsf M_Px+ue\in\Omega_J
 \quad\Longrightarrow\quad
 \mathsf M_Px+ue\in\mathcal F.
\]
For a successor $y=\mathsf M_Px+ue$ of a state $x\in\mathcal F$,
define
\begin{equation*}
 \widetilde w(y)=
 \begin{cases}
  w(y),&y\in\mathcal F,\\
  1,&y=0\ \text{or}\ \bigl(y\in\Omega,\ \|y\|_\infty>J\bigr),\\
  0,&y\notin\Omega.
 \end{cases}
\end{equation*}
By the closure assumption, these are the only cases that can occur.

\begin{theorem}\label{thm:finite-supersolution}
Let $t>0$. Suppose that $w:\mathcal F\to[0,\infty)$ is even and satisfies
\begin{equation}\label{eq:supersolution}
 w(x)\ge\frac t4\Bigl(
  \widetilde w(\mathsf M_Px-e)
  +2\widetilde w(\mathsf M_Px)
  +\widetilde w(\mathsf M_Px+e)
 \Bigr)
 \qquad(x\in\mathcal F).
\end{equation}
If
\begin{equation}\label{eq:supersolution-initial-bound}
 \bar c=\frac t2\bigl(1+w(e)\bigr)<1,
\end{equation}
then
\begin{equation}\label{eq:supersolution-green-bound}
 G(t)\le\frac1{1-\bar c}.
\end{equation}
\end{theorem}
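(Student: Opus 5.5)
The plan is to reduce Theorem~\ref{thm:finite-supersolution} to Proposition~\ref{prop:first-stop} by proving $\Phi_J(t)\le\bar c$. Define, for $x\in\Omega_J$ (in particular for $x\in\mathcal F$), the truncated first-stop transform
\[
 \phi_N(x)=\sum_{k=1}^{N}t^k\,\PP_x\bigl(\tau_J=k,\ \text{the first stop is kept}\bigr),
\]
where $\tau_J$ now denotes the first time $k\ge1$ at which the chain started at $x$ hits $0$ or leaves $\Omega_J$. Extend by $\widetilde\phi_N(y)=\phi_N(y)$ for $y\in\Omega_J$, $\widetilde\phi_N(y)=1$ for $y=0$ or for $y\in\Omega$ with $\|y\|_\infty>J$, and $\widetilde\phi_N(y)=0$ for $y\notin\Omega$. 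This matches the definition of $\widetilde w$. A first-step decomposition then gives
\[
 \phi_{N}(x)=\frac t4\Bigl(\widetilde\phi_{N-1}(\mathsf M_Px-e)+2\widetilde\phi_{N-1}(\mathsf M_Px)+\widetilde\phi_{N-1}(\mathsf M_Px+e)\Bigr),
\]
with $\phi_0\equiv0$ on $\Omega_J$. Here one checks that a successor equal to $0$, or outside the cube but inside $\Omega$, is a kept stop at time one, contributing weight $t$. A successor outside $\Omega$ is discarded. Otherwise the chain continues from a state of $\Omega_J$.

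The key step is the induction $\phi_N\le w$ on $\mathcal F$ for all $N$. The base case $\phi_0=0\le w$ is immediate since $w\ge0$. For the inductive step, take $x\in\mathcal F$. The closure hypothesis says that every successor lying in $\Omega_J$ lies in $\mathcal F$, so $\widetilde\phi_{N-1}\le\widetilde w$ holds at each of the three successors: by induction on $\mathcal F$, and with equality in the three boundary cases. Monotonicity of the right-hand side together with the supersolution inequality \eqref{eq:supersolution} then yields $\phi_N(x)\le w(x)$. Letting $N\to\infty$ by monotone convergence gives $\phi(x)\le w(x)$ on $\mathcal F$.

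Finally, apply the first-step decomposition at the origin. The three successors of $0$ are $-e$, $0$ and $e$. The middle one is an immediate kept return with probability $1/2$. The other two lie in $\Omega_J$: indeed $\|\pm e\|_\infty=1\le J$, and $|d(\pm e)|=\beta-1<1$ since $\mathcal L_\beta(e)=1$. Both are in $\mathcal F$ by hypothesis. Hence
\[
 \Phi_J(t)=\frac t4\bigl(\phi(-e)+2+\phi(e)\bigr)\le\frac t4\bigl(w(-e)+2+w(e)\bigr)=\frac t2\bigl(1+w(e)\bigr)=\bar c,
\]
using that $w$ is even. Since $\bar c<1$ by \eqref{eq:supersolution-initial-bound}, Proposition~\ref{prop:first-stop} with $c=\bar c$ gives \eqref{eq:supersolution-green-bound}.

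The main point needing care is the bookkeeping of stops, so that $\Phi_J(t)$, defined from the origin, really is the first-step combination above. Returns to $0$ and cube exits inside $\Omega$ both carry weight $t^k$, and exits from $\Omega$ carry weight $0$. The proof must also avoid ever needing a bound at states outside $\mathcal F$, which is exactly what the closure assumption supplies. Working with the truncations $\phi_N$ rather than $\phi$ directly avoids any issue of finiteness when $t>1$.
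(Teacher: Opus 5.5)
Your proposal is correct and follows the paper's proof essentially step for step. You bound the truncated first-stop transforms $\phi_N$ by $w$ on $\mathcal F$ by induction, using the supersolution inequality and the closure hypothesis, and pass to the monotone limit. You then use the first-step decomposition at the origin with evenness of $w$ to get $\Phi_J(t)\le\bar c$, and conclude with Proposition~\ref{prop:first-stop}; your explicit check that $\pm e\in\Omega_J$ is just a slightly more detailed version of the same argument.
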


\begin{proof}
For $x\in\mathcal F$, let $\phi_m(x)$ be the $t$-weighted mass of kept
first stops occurring within at most $m$ further steps, with
$\phi_0=0$. The one-step recursion uses $\phi_m$ at internal successors,
the terminal value $1$ at a return or a return-capable exit, and $0$ at
an exit from $\Omega$. By \eqref{eq:supersolution} and induction,
\[
 0\le\phi_m(x)\le w(x)
 \qquad(x\in\mathcal F).
\]
Passing to the monotone limit bounds the complete first-stop transform
from every internal state by $w$.

From the origin, the zero digit returns immediately, while the other two
digits move to $e$ and $-e$. Hence, by evenness,
\[
 \Phi_J(t)
 \le\frac t2+\frac t4w(e)+\frac t4w(-e)
 =\frac t2\bigl(1+w(e)\bigr)
 =\bar c.
\]
Proposition~\ref{prop:first-stop} gives
\eqref{eq:supersolution-green-bound}.
\end{proof}

Combining Theorem~\ref{thm:finite-supersolution} with
Corollary~\ref{cor:finite-collision}, any choice $1<t<\beta$ for which
\eqref{eq:supersolution-initial-bound} holds gives
\[
 D_2(\nu_r)\ge\frac{\log t}{\log\beta},
\]
and
\[
 I_s(\nu_r)<\infty
 \qquad
 \left(0<s<\frac{\log t}{\log\beta}\right).
\]
Appendix~\ref{app:lower} applies this criterion at one rational value of
$t$ for each quartic parameter.

\section{Explicit lower bounds}\label{app:lower}

For the two quartic parameters, put
\begin{equation*}
 t_1=\frac{68883}{40000},
 \qquad
 t_2=\frac{235399}{125000},
 \qquad
 s_i=\frac{\log t_i}{\log\beta_i}\quad(i=1,2).
\end{equation*}
We apply the finite criterion of Appendix~\ref{app:lower-criterion} at
these two values of $t$. The corresponding state lists and integer
supersolutions are contained in the verification release; their exact
verification is described in Appendix~\ref{app:verification}.

\begin{theorem}\label{thm:explicit-lower}
With the indexed notation fixed in Section~\ref{sec:collisions}, one has
\begin{equation}\label{eq:explicit-green-bounds}
 G_1(t_1)<105,
 \qquad
 G_2(t_2)<624.
\end{equation}
Consequently,
\begin{equation}\label{eq:explicit-lower-energy}
 I_s(\nu_{r_i})<\infty\quad(0<s<s_i),
 \qquad
 D_2(\nu_{r_i})\ge s_i
 \quad(i=1,2).
\end{equation}
The lower exponents satisfy
\begin{align}
 0.9999905922652119&<s_1<0.9999905922652120,
 \label{eq:lower-exponent-one}\\
 0.9999903475160519&<s_2<0.9999903475160520.
 \label{eq:lower-exponent-two}
\end{align}
In particular,
\begin{equation}\label{eq:readable-lower-bounds}
 D_2(\nu_{r_1})>0.99999,
 \qquad
 D_2(\nu_{r_2})>0.99999.
\end{equation}
\end{theorem}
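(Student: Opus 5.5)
The plan is to apply Theorem~\ref{thm:finite-supersolution} once for each parameter, with $t=t_i$ and an explicit finite state set $\mathcal F_i\subset\Omega_{J_i}$ carrying an even weight $w_i$. The Green bounds \eqref{eq:explicit-green-bounds} will come out of \eqref{eq:supersolution-green-bound}. For this we need $\bar c_1=\frac{t_1}{2}(1+w_1(e))\le 1-1/105$, and the analogous inequality with $624$ for $P_2$. It suffices to exhibit $w_i$ with $w_1(e)<2/t_1-1-2/(105t_1)$, and similarly for $i=2$. After that, Corollary~\ref{cor:finite-collision} with $1<t_i<\beta_i$ gives \eqref{eq:explicit-lower-energy}. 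We must first check $t_i<\beta_i$ rigorously. This follows from the quadratic for $\beta_i$ via $u_i=\beta_i+r_i$ in \eqref{eq:trace-roots}: it is enough to show $t_i+t_i^{-1}<u_i$ by exact rational arithmetic against $\sqrt{13}$ or $\sqrt2$.

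The construction of $\mathcal F_i$ and $w_i$ proceeds in three steps.

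\emph{Generating the states.} Starting from $\pm e$, we generate all successors $\mathsf M_Px+ue$ with $u\in\{-1,0,1\}$ that stay in $\Omega_J$. Membership in $\Omega$ is decided by interval evaluation of $d(x)=(\beta-1)\mathcal L_\beta(x)$. The set is finite because $\|x\|_\infty\le J$. It is symmetric because the dynamics commutes with $x\mapsto -x$, and it is closed by construction.

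\emph{Solving for the weights.} We solve the linear system $w=\frac t4(\widetilde w(\mathsf M_Px-e)+2\widetilde w(\mathsf M_Px)+\widetilde w(\mathsf M_Px+e))$ numerically, for example by value iteration from $w=0$. Exits from the cube that remain in $\Omega$ are assigned the value $1$ in $\widetilde w$. We then inflate the solution slightly, round it upward to integers over a common denominator, and symmetrize it to make it even.

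\emph{Exact verification.} Each inequality \eqref{eq:supersolution} is checked in exact rational arithmetic. The case distinctions in $\widetilde w$, namely whether $y\in\Omega$, are decided by outward-rounded intervals for $\mathcal L_\beta(y)$; an ambiguous state is conservatively treated as kept, with value $1$.

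The enclosures \eqref{eq:lower-exponent-one}--\eqref{eq:lower-exponent-two} for $s_i=\log t_i/\log\beta_i$ are then routine high-precision interval evaluations. \eqref{eq:readable-lower-bounds} follows directly from them.

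The main obstacle is making $\bar c<1$ with margin. Since $t_i$ is very close to $\beta_i^{0.99999}$, the first-stop transform is nearly critical. The truncation at $\|x\|_\infty\le J$ treats every return-capable exit as a certain return, and this overcounts. $J$ must therefore be large enough that exit mass from the cube weighted by $t^k$ is tiny, which may require a very large state set, possibly in the millions. If a direct cube is too costly, I would first try to shrink $\mathcal F$ using the neutral coordinates $\mathcal L_\zeta$. I would also try to exploit the fact that states with large $|d(x)|$ close to $1$ escape quickly, which allows their weights to be capped analytically. The closure property must still hold exactly for whatever set is used.
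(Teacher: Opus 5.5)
Your plan follows the paper's proof: it applies Theorem~\ref{thm:finite-supersolution} to exactly verified integer supersolutions ($J_1=180$, $J_2=240$, common denominator $2^{80}$, with $455137$ and $642451$ sign classes of states), invokes Corollary~\ref{cor:finite-collision} once $1<t_i<\beta_i$ is certified, and encloses $s_i$ by rational logarithm bounds. The differences are minor. You certify $t_i<\beta_i$ through $t_i+t_i^{-1}<u_i$ rather than through bisection brackets. Treating an undecided strip-membership state as a kept stop with value $1$ amounts to running the same first-stop argument with internal region $\Omega_J$ minus those states; the proofs of Proposition~\ref{prop:first-stop} and Theorem~\ref{thm:finite-supersolution} go through unchanged for that region, but the theorem's closure hypothesis as literally stated does not cover it. The paper instead decides constant cases exactly and rejects unresolved nonconstant ones, which cannot lie on $|d|=1$.
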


We now record the finite supersolutions used in the theorem. For a
nonzero vector $x\in\ZZ^4$, let $\can_{\pm}(x)$ be the member of
$\{x,-x\}$ whose first nonzero coordinate is positive. For $i=1,2$, the
state list consists of distinct canonical representatives
\[
 x_{i,0},\ldots,x_{i,m_i-1},
 \qquad
 x_{i,0}=e,
\]
in $\Omega_{J_i}$, where
\begin{equation*}
 (J_1,m_1)=(180,455137),
 \qquad
 (J_2,m_2)=(240,642451).
\end{equation*}
Set
\[
 \mathcal F_i=\{\pm x_{i,j}:0\le j<m_i\}.
\]
Every successor that remains in $\Omega_{J_i}$ is represented in
$\mathcal F_i$; a return to zero or an exit from the cube that remains
in $\Omega$ has terminal value $1$, while an exit from $\Omega$ has
terminal value $0$.

Let
\begin{equation*}
 \Lambda=2^{80}.
\end{equation*}
The second finite array consists of nonnegative integers
$\widehat w_{i,j}$, and we set
\[
 w_i(\pm x_{i,j})
 =\frac{\widehat w_{i,j}}{\Lambda}.
\]
Write $t_i=a_i/b_i$ in lowest terms, so that
\[
 (a_1,b_1)=(68883,40000),
 \qquad
 (a_2,b_2)=(235399,125000).
\]
For a successor $y$ of a listed state, let
$\widehat{\widetilde w}_i(y)$ denote the numerator of its terminally
extended value: it is $\widehat w_{i,k}$ for an internal successor
represented by $x_{i,k}$, $\Lambda$ for a return or a return-capable
exit, and $0$ for an exit from $\Omega$. Define
\begin{equation}\label{eq:integer-row-residual}
 \mathcal R_{i,j}
 =4b_i\widehat w_{i,j}
 -a_i\Bigl(
 \widehat{\widetilde w}_i(\mathsf M_{P_i}x_{i,j}-e)
 +2\widehat{\widetilde w}_i(\mathsf M_{P_i}x_{i,j})
 +\widehat{\widetilde w}_i(\mathsf M_{P_i}x_{i,j}+e)
 \Bigr).
\end{equation}
Then $\mathcal R_{i,j}\ge0$ is exactly the supersolution inequality
\eqref{eq:supersolution} after clearing denominators.

\begin{proposition}\label{prop:integer-supersolutions}
The sets $\mathcal F_i$ are sign-symmetric, contain $\pm e$, and are
closed under every successor that remains in $\Omega_{J_i}$. The integer
arrays are nonnegative and satisfy
\begin{align*}
 \min_{0\le j<m_1}\mathcal R_{1,j}
 &=2791379258189148576>0,
 \\
 \min_{0\le j<m_2}\mathcal R_{2,j}
 &=8688248611889750432>0.
\end{align*}
Their values at $e$ are
\begin{align*}
 \widehat w_{1,0}
 &=181660711014187729550784,
 \\
 \widehat w_{2,0}
 &=72924590283342799761072.
\end{align*}
If
\begin{equation*}
 \bar c_i
 =\frac{t_i}{2}
 \left(1+\frac{\widehat w_{i,0}}{\Lambda}\right),
\end{equation*}
then
\begin{equation}\label{eq:lower-initial-gaps}
 1-\bar c_1>\frac1{105},
 \qquad
 1-\bar c_2>\frac1{624}.
\end{equation}
Hence $w_i:\mathcal F_i\to[0,\infty)$ satisfies the hypotheses of
Theorem~\ref{thm:finite-supersolution} for $P_i$, $J_i$, and $t_i$.
\end{proposition}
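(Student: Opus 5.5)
This is a finite verification statement, so the proof will be a description of an exact computation rather than an analytic argument. I will work entirely in integer arithmetic. The states lie in $\ZZ^4$, the map $\mathsf M_{P_i}$ has integer coefficients, the weights are integers over $\Lambda=2^{80}$, and $t_i=a_i/b_i$ is rational.

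\textbf{Step 1: the state list.} I will first check that the $m_i$ listed vectors are distinct canonical representatives, meaning their first nonzero coordinate is positive. This makes $\mathcal F_i=\{\pm x_{i,j}\}$ sign-symmetric by construction. I will also confirm that $x_{i,0}=e$, and that each $x_{i,j}$ satisfies $0<\|x_{i,j}\|_\infty\le J_i$ and $|d(x_{i,j})|<1$.

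\textbf{Step 2: classifying successors.} For each $j$ and each $u\in\{-1,0,1\}$ I will form $y=\mathsf M_{P_i}x_{i,j}+ue$ and classify it into one of four cases:
\begin{itemize}
\item $y=0$;
\item $y\notin\Omega$;
\item $y\in\Omega$ with $\|y\|_\infty>J_i$;
\item $y\in\Omega_{J_i}$, in which case I look up $\can_\pm(y)$ in a hash table of the list and record the index $k$.
\end{itemize}
If any lookup fails, closure fails. By symmetry it suffices to treat the listed representatives. Successors of $-x$ are the negatives of successors of $x$ with $u$ replaced by $-u$. Membership in $\Omega$ is preserved under negation because $d$ is linear.

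The only non-integer ingredient is the test $|d(y)|<1$, where $d(y)=(\beta-1)\mathcal L_\beta(y)$ lies in $\ZZ[\beta]$. I will decide it exactly. One option is to use interval enclosures of $\beta$ with enough precision that no value straddles $\pm1$, falling back to exact computation in $\QQ(\sqrt{13})$ or $\QQ(\sqrt2)$ towers. The cleaner option is to write $d(y)=p+q\beta+s\beta^2+t\beta^3$ and compare signs via the quadratic extensions: $\beta$ solves $X^2-u_iX+1$ over $\QQ(u_i)$, and sign questions in $\QQ(u_i)$ reduce to integer inequalities. This sign certification is the step I expect to be the main obstacle, since near-boundary states must be classified correctly for every one of roughly $3m_i$ successors. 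I will handle it by certified interval arithmetic with an exact fallback for any ambiguous case.

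\textbf{Step 3: the residuals.} With the successor classification in hand, I will compute each $\mathcal R_{i,j}$ from \eqref{eq:integer-row-residual} in exact big-integer arithmetic, using numerator $\Lambda$ for terminal value $1$ and $0$ for exits from $\Omega$. I will take the minimum over $j$, check it against the stated values, and check that all $\widehat w_{i,j}\ge0$. Evenness of $w_i$ holds by definition, so nonnegativity of the residuals gives \eqref{eq:supersolution} for all of $\mathcal F_i$.

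\textbf{Step 4: the initial gap.} Finally, $1-\bar c_i>1/105$ (respectively $1/624$) is equivalent to the integer inequality
\[
105\cdot\bigl(2b_i\Lambda-a_i(\Lambda+\widehat w_{i,0})\bigr)>2b_i\Lambda ,
\]
and similarly with $624$, which I will check directly. The hypotheses of Theorem~\ref{thm:finite-supersolution} then hold with $c=\bar c_i$.
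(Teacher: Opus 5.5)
Your proposal is correct and is essentially the paper's own verification in Appendix~\ref{app:verification}. Both check the canonical state list, recompute the three successors of each listed state with the prescribed terminal values, and confirm in exact arithmetic the integer residuals \eqref{eq:integer-row-residual}, the nonnegativity of the weights, and the initial-gap inequalities. The one difference is the strip test: the paper uses interval Horner enclosures of the $d_j(\beta_i)$ at scale $\Lambda=2^{80}$, settles the constant case exactly (this is needed, since e.g.\ $d(1,1,1,1)=1$ for both quartics), and rejects any unresolved nonconstant case, whereas your exact sign test in the tower over $\QQ(u_i)$ decides every case outright.
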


The finite assertions in Proposition~\ref{prop:integer-supersolutions}
are verified by the exact integer and rational calculations described in
Appendix~\ref{app:verification}.

\begin{proof}[Proof of Theorem~\ref{thm:explicit-lower}]
Apply Theorem~\ref{thm:finite-supersolution} with the supersolutions in
Proposition~\ref{prop:integer-supersolutions}. By
\eqref{eq:lower-initial-gaps},
\[
 G_1(t_1)
 \le\frac1{1-\bar c_1}<105,
 \qquad
 G_2(t_2)
 \le\frac1{1-\bar c_2}<624,
\]
which proves \eqref{eq:explicit-green-bounds}. The exact root comparisons
in Appendix~\ref{app:verification} give
\[
 1<t_i<\beta_i
 \qquad(i=1,2).
\]
Corollary~\ref{cor:finite-collision} therefore yields
\eqref{eq:explicit-lower-energy}.

Finally, the rational logarithm bounds described in
Appendix~\ref{app:verification} give
\eqref{eq:lower-exponent-one}--\eqref{eq:lower-exponent-two}. In
particular, $s_i>0.99999$, proving
\eqref{eq:readable-lower-bounds}. No assertion is made about finiteness
of the energy at $s=s_i$.
\end{proof}

Combining Theorem~\ref{thm:explicit-lower} with
Theorem~\ref{thm:explicit-upper} gives the bounds stated in
Theorem~\ref{thm:quantitative}. The intervals
\eqref{eq:lower-exponent-one}--\eqref{eq:lower-exponent-two} enclose the
lower exponents $s_i$, not the dimensions themselves.

\section{Exact arithmetic and verification}\label{app:verification}

The explicit inequalities in Appendices~\ref{app:upper} and
\ref{app:lower} are checked by exact integer arithmetic or by rational
intervals rounded outwards. Numerical searches may be used to locate
candidate packets or supersolutions, but no floating-point comparison is
used in their acceptance.

For the upper bounds, all real interval endpoints lie on the grid with
denominator $10^{200}$. Addition is exact on this grid; products and
quotients are rounded outwards by integer floor and ceiling division,
and division is allowed only when the denominator interval has been
proved not to meet zero. Square roots are enclosed by integer-square
comparisons. Complex quantities are represented by rectangular interval enclosures, and large powers of $\zeta$ are evaluated by repeated
squaring. In particular, the return inequalities in
\eqref{eq:upper-one-returns} and \eqref{eq:upper-two-returns} are checked
directly from the algebraic description of $\zeta$. Centering is accepted
only when the corresponding interval determines a unique integer and
remains strictly inside $(-1/2,1/2)$ after subtraction.

The potential
\[
 V(x)=-2\log\cos(\pi x)
\]
is enclosed without floating-point transcendental evaluations. Machin's
identity
\[
 \pi=16\arctan(1/5)-4\arctan(1/239)
\]
is combined with alternating-series bounds for the two arctangents,
while sine and cosine are evaluated by Taylor polynomials with explicit
remainders. For $0<c\le1$, putting $z=(1-c)/(1+c)$ gives
\begin{equation}\label{eq:verification-log-cosine}
 -2\log c
 =4\sum_{j=0}^{m-1}\frac{z^{2j+1}}{2j+1}+R_m,
 \qquad
 0\le R_m\le
 \frac{4z^{2m+1}}{(2m+1)(1-z^2)}.
\end{equation}
The verifier uses $160$ terms in each arctangent series, $100$ terms in
each sine and cosine series, and $m=160$ in
\eqref{eq:verification-log-cosine}. For sine and cosine the arguments lie
in $[-2,2]$, with remainder bounds $|x|^{201}/201!$ and
$|x|^{200}/200!$, respectively. These outward bounds verify the seed
actions, boundary distances, return estimates, mirror and
delayed-difference bounds, derivative sums, and the scalar inequalities
in Table~\ref{tab:explicit-amplifier}. The final large cores are not
expanded coordinate by coordinate; their bounds follow from the analytic
lemmas applied to the verified finite seeds.

For the lower bounds, each real root $\beta_i$ is first isolated by exact
rational bisection, starting from
\[
 \frac{17}{10}<\beta_1<\frac74,
 \qquad
 \frac{15}{8}<\beta_2<\frac{19}{10}.
\]
The signs of the defining polynomials at the endpoints, together with a
rational lower bound for the derivative, show that each interval contains
exactly one root. Two hundred bisections give strict rational brackets
\[
 \beta_i^-<\beta_i<\beta_i^+,
\]
which are used in all later root-dependent comparisons. In particular,
$1<t_i<\beta_i$ is proved by comparison with the rational lower endpoint
$\beta_i^-$.

Membership in the return strip $\Omega$ is also decided without a
decimal approximation to $\beta_i$. Reducing
$(X-1)\mathcal L_X(x)$ modulo
\[
 P(X)=X^4-AX^3+BX^2-AX+1
\]
gives
\begin{equation}\label{eq:verification-strip-polynomials}
 d(x)=\sum_{j=0}^3x_jd_j(\beta),
\end{equation}
where
\[
 \begin{aligned}
 d_0(X)&=-X^3+AX^2-BX+A-1,\\
 d_1(X)&=X^3-(A+1)X^2+(A+B)X-B,\\
 d_2(X)&=X^2-(A+1)X+A,\\
 d_3(X)&=X-1.
 \end{aligned}
\]
Interval Horner evaluation on $[\beta_i^-,\beta_i^+]$ gives outward
rational enclosures
\[
 \frac{\underline d_j}{\Lambda}
 \le d_j(\beta_i)\le
 \frac{\overline d_j}{\Lambda},
 \qquad
 \Lambda=2^{80}.
\]
For $x\in\ZZ^4$, define
\[
 \begin{aligned}
 d_-(x)
 &=\sum_{x_j\ge0}x_j\underline d_j
   +\sum_{x_j<0}x_j\overline d_j,\\
 d_+(x)
 &=\sum_{x_j\ge0}x_j\overline d_j
   +\sum_{x_j<0}x_j\underline d_j.
 \end{aligned}
\]
Then
\begin{equation*}
 -\Lambda<d_-(x),\qquad d_+(x)<\Lambda
 \quad\Longrightarrow\quad x\in\Omega,
\end{equation*}
whereas
\begin{equation*}
 d_-(x)\ge\Lambda
 \quad\text{or}\quad
 d_+(x)\le-\Lambda
 \quad\Longrightarrow\quad x\notin\Omega.
\end{equation*}
If neither interval test decides membership in $\Omega$, the polynomial in
\eqref{eq:verification-strip-polynomials} is examined exactly. A constant
case is compared with $\pm1$ directly; an unresolved nonconstant case is
rejected. Since a nonconstant integer polynomial of degree at most three
cannot take the rational value $\pm1$ at the degree-four number
$\beta_i$, this also handles the strict boundary $|d|=1$.

The state lists in Appendix~\ref{app:lower} are checked directly from
their coordinates. Distinctness, the canonical sign convention, the
height bound, strip membership, and the initial state are verified first.
For every listed state and every digit $-1,0,1$, the successor is
recomputed from \eqref{eq:affine-companion-map}. An internal successor
must occur in the state list, while returns and exits receive the terminal
values prescribed in Appendix~\ref{app:lower-criterion}. The
supersolution inequalities are then the exact integer comparisons
\[
 \begin{aligned}
 \mathcal R_{i,j}
 ={}&4b_i\widehat w_{i,j}\\
 &-a_i\Bigl(
 \widehat{\widetilde w}_i(\mathsf M_{P_i}x_{i,j}-e)
 +2\widehat{\widetilde w}_i(\mathsf M_{P_i}x_{i,j})
 +\widehat{\widetilde w}_i(\mathsf M_{P_i}x_{i,j}+e)
 \Bigr)\ge0,
 \end{aligned}
\]
namely \eqref{eq:integer-row-residual}. The same exact arithmetic verifies
the minimum residuals, the initial values $\widehat w_{i,0}$, and the
inequalities \eqref{eq:lower-initial-gaps}.

Finally, the lower exponents are compared by rational logarithm bounds.
For rational $x>1$, let $z=(x-1)/(x+1)$. Then
\begin{equation*}
 \log x
 =2\sum_{j=0}^{m-1}\frac{z^{2j+1}}{2j+1}+R_m,
 \qquad
 0<R_m\le
 \frac{2z^{2m+1}}{(2m+1)(1-z^2)}.
\end{equation*}
With $m=48$, let $\log_-(x)$ and $\log_+(x)$ denote the resulting
rational lower and upper bounds. Monotonicity of the logarithm and the
root brackets give
\[
 \frac{\log_-(t_i)}{\log_+(\beta_i^+)}
 <
 \frac{\log t_i}{\log\beta_i}
 <
 \frac{\log_+(t_i)}{\log_-(\beta_i^-)}.
\]
Clearing denominators proves the strict enclosures
\eqref{eq:lower-exponent-one}--\eqref{eq:lower-exponent-two}; in
particular, the comparison with $99999/100000$ is exact.

The verification release described in the Data and Code Availability
statement contains the finite witnesses and Python~3 programs used for
these checks. 
The upper verifier checks the finite data in
Lemmas~\ref{lem:upper-data-one} and \ref{lem:upper-seed-two}, the
majorants in Propositions~\ref{prop:explicit-packet-one} and
\ref{prop:explicit-packet-two}, and the scalar inequalities in
Table~\ref{tab:explicit-amplifier}. The lower verifier checks the
supersolutions in Proposition~\ref{prop:integer-supersolutions} and the
root and logarithm comparisons used in
Theorem~\ref{thm:explicit-lower}. 
These programs verify the finite claims used above; 
the analytic arguments are proved independently in
the paper.

\sectionnotoc{Data and Code Availability}\label{sec:data-code}

The certificate data and verification programs supporting the quantitative
bounds in Theorem~\ref{thm:quantitative} are openly available on
Zenodo~\cite{quartic_salem_verification}. The archive contains the finite witnesses, verification code, recorded
outputs, and a \texttt{README.md} file with the commands needed to
reproduce the checks and additional diagnostics.
The proof of Theorem~\ref{thm:main} is independent of this archive.

\sectionnotoc{Funding}

G.~C. was partially supported by the National Natural Science Foundation of China (NSFC), grant no.~12371126.
X.~F. was partially supported by the National Science and Technology Council, Taiwan, grant no.~114-2115-M-A49-003-MY3.

\sectionnotoc{AI Declaration}

The authors used OpenAI ChatGPT (GPT-6.0 Sol) for language editing, \LaTeX\ formatting, and limited assistance with local mathematical reasoning and numerical verification. The overall mathematical strategy and framework were developed by the authors. All mathematical content was written and checked by the authors who take full responsibility.

\sectionnotoc{Conflict of Interest}

The authors declare no conflicts of interest.

\end{document}